\newtheorem{theorem}{Theorem}[section]
\newtheorem{lemma}[theorem]{Lemma}
\newtheorem{corollary}[theorem]{Corollary}
\newtheorem{proposition}[theorem]{Proposition}
\theoremstyle{definition}
\newtheorem{definition}[theorem]{Definition}
\newtheorem{example}[theorem]{Example}
\newtheorem{scholium}[theorem]{Scholium}
\theoremstyle{remark}
\newtheorem{remark}[theorem]{Remark}
\numberwithin{equation}{section}
\newcommand{\be}{\begin{equation}}
\newcommand{\ee}{\end{equation}}
\newcommand{\mc}{\mathcal}
\newcommand{\C}{{\mathbb C}}
\newcommand{\Z}{{\mathbb Z}}
\newcommand{\N}{{\mathbb N}}
\newcommand{\bE}{{\mathbb E}}
\newcommand{\bS}{{\mathbb S}}
\newcommand{\id}{{\rm{id}}}
\newcommand{\mf}{\mathfrak}
\newcommand{\fg}{{\mf g}}
\newcommand{\fb}{{\mf b}}
\newcommand{\cB}{\mc B}
\newcommand{\cE}{\mc E}
\newcommand{\cF}{\mc F}
\newcommand{\cH}{\mc H}
\newcommand{\cI}{\mc I}
\newcommand{\cR}{\mc R}
\newcommand{\cS}{\mc S}
\newcommand{\cT}{\mc T}
\newcommand{\cP}{\mc P}
\newcommand{\bv}{\mathbf v}
\newcommand{\La}{\Lambda}
\newcommand{\la}{\lambda}
\newcommand{\lpq}{\Lambda^{p|q}}
\newcommand{\cM}{\mc M}
\newcommand{\U}{{\rm{U}}}
\newcommand{\End}{{\rm{End}}}
\newcommand{\Hom}{{\rm{Hom}}}
\newcommand{\GL}{{\rm{GL}}}
\newcommand{\Sym}{{\rm{Sym}}}
\newcommand{\Exp}{{\rm{Exp}}}
\newcommand{\im}{{\rm{Im}}}
\newcommand{\inv}{{^{-1}}}
\newcommand{\st}{^{\rm{st}}}
\newcommand{\fso}{{\mathfrak {so}}}
\newcommand{\gl}{{\mathfrak {gl}}}
\newcommand{\osp}{{\mathfrak {osp}}}
\newcommand{\lr}{\longrightarrow}
\newcommand{\ol}{\overline}
\newcommand{\ot}{\otimes}
\newcommand{\sdim}{{\rm sdim\,}}
\newcommand{\OSp}{{\rm OSp}}
\begin{document}

\normalfont

\title[Invariant theory of the orthosymplectic supergroup]{The first fundamental theorem of
invariant\\ theory for the  orthosymplectic supergroup}

\author{G.I. Lehrer and R.B. Zhang}
\thanks{This research was supported by the Australian Research Council}
\address{School of Mathematics and Statistics,
University of Sydney, N.S.W. 2006, Australia}
\email{gustav.lehrer@sydney.edu.au, ruibin.zhang@sydney.edu.au}
\date {Final draft}
\begin{abstract}
We give an elementary proof of the first fundamental theorem of invariant theory for the
orthosymplectic supergroup by generalising the geometric method of Atiyah, Bott and Patodi to
the supergroup context.  We use methods from super-algebraic geometry to convert invariants of
the orthosymplectic supergroup into invariants of the corresponding general linear supergroup on a different
space. In this way, super Schur-Weyl-Brauer duality is established
between the orthosymplectic supergroup of superdimension
$(m|2n)$ and the Brauer algebra with parameter $m-2n$. The result may be interpreted
in terms of the relevant Harish-Chandra super pair action (over $\C$), or
equivalently, in terms of  the orthosymplectic Lie supergroup over the infinite dimensional
Grassmann algebra. We also state a corresponding theorem for the orthosymplectic Lie superalgebra,
which involves an extra invariant generator, the super-Pfaffian.
\end{abstract}

\subjclass[2010]{16W22,15A72,17B20}

\keywords{Orthosymplectic Lie superalgebra, supergroup, tensor invariants, Brauer algebra, Schur-Weyl duality}

\maketitle

\tableofcontents

\section{Introduction}\label{sect:intro}

The invariant theory of Lie supergroups and Lie superalgebras plays an important
role in the study of supersymmetry in particle physics. For example,
the fields in any supersymmetric quantum field theory
admit a nontrivial action of the Poincar\'e supergroup. The hamiltonian of the theory,
which is constructed from the quantum fields, is invariant under the Poincar\'e supergroup action,
and so also are the other physical quantities which may be measured experimentally.

Systematic studies of the invariant theory of Lie supergroups and Lie superalgebras \cite{K,S}
already appear in the early 80s in works \cite{Sch1, Sch2, Sch3}  of Scheunert, who
in particular constructed generators for the centres of the universal enveloping
algebras of the classical Lie superalgebras.
Sergeev announced fundamental theorems of invariant theory
for classical supergroups and their Lie superalgebra in \cite{S0}
and provided more detailed expositions in \cite{S1, S2}.
A super analogue of the Schur-Weyl duality between the general linear superalgebra
and the symmetric group was established  independently in \cite{BR, S0}.

More recently,  Brundan and Stroppel in \cite{BS} showed that there is a surjection
from the walled Brauer algebra to the endomorphism algebra of mixed tensors
of the natural module and the dual module for the general linear superalgebra.
This theme is also pursued in \cite{RS}.
In the seminal paper \cite{H} on invariant theory of classical groups
which is ostensibly unrelated to supersymmetry,
Howe proved various `Howe dualities' among Lie superalgebras and ordinary Lie groups.
Similar results were obtained independently by Sergeev in \cite{S0}.
These Howe dualities have been fruitfully explored  in recent years to develop the
Segal-Shale-Weil representations of classical Lie superalgebras
(see \cite{CW, CZ, CLZ} and references therein). In particular, the $(\gl(m|n), \gl(k|l))$ Howe duality
survives quantisation \cite{WZ} and played a crucial role in understanding the q-deformed
Segal-Shale-Weil representations of the quantum general linear supergroup \cite{WZ, Z1, Z2}.
Further instances of surjective maps from the group ring of the braid group to endomorphism
algebras of tensor space may be found in \cite{LZ1, LZ2,LZ3,LZ4,LZZ}.

In this paper, we shall present a careful and elementary proof of the first fundamental theorem (FFT)
of invariant theory for the orthosymplectic supergroup, and derive from it a
super analogue of Schur-Weyl-Brauer duality between the supergroup and the Brauer algebra.
Our method is based on super-algebraic geometry, which permits one to convert invariants
of the orthosymplectic super group into those of the general linear supergroup.
In a future work we shall use our formulation to prove a second fundamental theorem
in this context.

Let $V_\C$ be a $\Z_2$-graded complex vector space of superdimension $(m|2n)$
endowed with a nondegenerate supersymmetric even bilinear form,
and let $\osp(V_\C)$ be the orthosymplectic Lie superalgebra of $V_\C$.
It has long been known that for any $r$, the Brauer algebra $B_r(d)$
with parameter $d=m-2n$ acts on $V_\C^{\otimes r}$,
respecting the $\osp(V_\C)$-action.
This was used in
decomposing low rank tensor powers of
$V_\C$ into simple $\osp(V_\C)$-modules in \cite{BLR}.
The authors of \cite{BLR} raised the question
whether the homomorphism from $B_r(d)$ to the
endomorphism algebra $\End_{\osp(V_\C)}(V_\C^{\otimes r})$ is a surjection.
It is not very difficult to see that the homomorphism cannot always be surjective.
We therefore consider the question of a
super Schur-Weyl-Brauer duality between the Brauer algebra
and the orthosymplectic supergroup, and that is
what we establishe here. In a future work, we shall describe
the extra invariants arising when considering the Lie superalgebra
$\osp(m|2n)$.

The orthosymplectic supergroup may be defined to be
the Harish-Chandra pair $({\rm O}((V_\C)_{\bar0})\times {\rm Sp}((V_\C)_{\bar1}), \osp(V_\C))$
following \cite{DM}.  An equivalent definition is as an algebraic group
over the Grassmann algebra $\Lambda$ of infinite degree
(defined as a direct limit). P. Deligne has pointed out that these two objects may be thought
of as the points of the superscheme $\OSp(V)$ over $\C$ and $\La$ respectively.
Let $V=V_\C\otimes_\C \Lambda$, equipped with the
supersymmetric form obtained by extending the form on
$V_\C$ bilinearly over $\Lambda$.
Then the orthosymplectic supergroup is defined to be the group $\OSp(V)$ of
invertible even endomorphisms of $V$ (as a $\Lambda$-module)
that preserve the supersymmetric form.

A large part of the paper is devoted to the proof of a linear version of the first fundamental theorem
of invariant theory for the orthosymplectic supergroup, that is, Theorem \ref{thm:fft-osp}.
A commutative superalgebra version of the FFT is deduced from it in Corollary \ref{cor:fft-osp-poly}.
The super Schur-Weyl-Brauer duality is the FFT in the setting
of endomorphism algebras. We present two versions of the duality, namely,
Corollaries \ref{cor:Brauer-super} and \ref{cor:Brauer},
applying respectively to the two incarnations above of the orthosymplectic supergroup.
A generalisation of the FFT for
the orthosymplectic supergroup to the setting of the Brauer
category of \cite{LZ5} is given in Theorem \ref{thm:fft-cat},
which contains both the linear and the endomorphism algebra versions
of the FFT as special cases.

Our proof of Theorem \ref{thm:fft-osp} follows in spirit the method
of Atiyah, Bott and Patodi in \cite[Appendix 1]{ABP} for proving the FFT
for the orthogonal group.
A detailed exposition of the method and its generalisation
to the symplectic group is given by Goodman and Wallach in \cite{GW1}.
The essential ingredient in our proof is the Key Lemma (i.e. Lemma \ref{lem:key},
or equivalently Lemma \ref{lem:key3}) established in Section \ref{sect:pf-new}.
It describes the $\OSp(V)$-invariant polynomial functions
on the even subspace of the endomorphism algebra of $V$. The proof of the
Key Lemma uses some geometric arguments in the graded-commutative context, as anticipated by Varadarajan
\cite[\S 4.5]{V} (see also \cite{DM}).

Given the Key Lemma, one reduces Theorem \ref{thm:fft-osp} to
the FFT for the general linear supergroup $\GL(V)$, i.e., Theorem \ref{thm:fft-gl}
(see also \cite[Theorem 1.1]{S1}), using a series of canonical, $\GL(V)$-module isomorphisms.
We prove Theorem \ref{thm:fft-gl} starting from the super Schur-Weyl duality
between $\gl(V_\C)$ and the symmetric group  \cite{BR, S0},  which is well established.
The $\GL(V)$-module isomorphisms needed are constructed
in Proposition \ref{prop:BW-OSp} in a way reminiscent of the Borel-Weil theorem.
Our proof of the FFT for the orthosymplectic
supergroup may be of interest in its own right, in that it provides an injective
map from invariants of $\OSp$ to invariants of $\GL$, whose invariant theory is
better understood and more straightforward.

We point out that Corollary \ref{cor:fft-osp-poly} is Sergeev's  \cite[Theorem 5.3]{S1}.
Our proof of the FFT is different from that given in \cite{S1}. In fact this work stems from
our attempt to understand {\it op.~cit.}

When $m$ is even and $r\ge m(2n+1)/2$, the endomorphism algebra
$\End_{\osp(V_\C)}(V_\C^{\otimes r})$ of $\osp(V_\C)$ contains elements which
are not invariants of the orthosymplectic supergroup.  Such elements arise from
the super analogue of the Pfaffian discovered by Sergeev \cite{S0} in the early 90s.
We explain a construction similar to Sergeev's in Section \ref{sect:pfaffian},
and briefly discuss how the super Pfaffian manifests itself in endomorphism algebras.
In a future work, we shall give a complete treatment of the invariants of $\osp(m|2n,\C)$
in this context.

Our formulation of classical supergroups is
in the spirit of the original physics literature \cite{SS}.
It is suitable for establishing algebraic results such as the
FFT and SFT of invariant theory for these supergroups.
One can define Lie supergroups \cite{DeW, Ko, M, V}
or algebraic supergroups \cite{DM, CCF} in more general ways,
but the simpler and more explicit formulation here permits a more concise treatment of the
fundamental theorems of invariant theory.
Finally we point out that our proof depends crucially on the infinite dimensional
nature of the direct limit Grassmann algebra $\Lambda$.

\medskip

\noindent{\bf Acknowledgement}.
The authors thank Pierre Deligne for discussions and correspondence about this work,
in particular for pointing out a gap in the proof of Lemma \ref{lem:key} in an earlier version.
We also thank Sasha Sergeev for discussions on super Schur-Weyl-Brauer duality.

\section{Linear superalgebra}
\subsection{$\Z_2$-graded vector spaces}\label{sect:basics}
We work over the field $\C$ of complex numbers. A $\Z_2$-graded vector space $V$ is
a direct sum $V_{\bar0}\oplus V_{\bar1}$ of two subspaces,
the even subspace $V_{\bar0}$ and odd subspace $V_{\bar1}$.
If $\dim V_{\bar0}=k$ and $\dim V_{\bar1}=l$, we write
$\sdim V=(k|l)$, and call it the superdimension of $V$.
Define the {\em parity} $[v]$ of any homogeneous element
$v\in  V_{\bar\alpha}$ by $[v]=\bar\alpha$. Parities are thought of
as elements of the ring $\Z_2=\Z/2\Z$, so may be added and multiplied.

If $V$ and $W$ are $\Z_2$-graded vector spaces,
their tensor product $V\otimes_\C W$
inherits a natural $\Z_2$-grading from the factors, with
\[
\begin{aligned}
(V\otimes_\C W)_{\bar 0}=V_{\bar 0}\otimes_\C W_{\bar 0}\oplus V_{\bar 1}\otimes_\C W_{\bar 1}, \\
(V\otimes_\C W)_{\bar 1}=V_{\bar 0}\otimes_\C W_{\bar 1}\oplus V_{\bar 1}\otimes_\C W_{\bar 0}.
\end{aligned}
\]
We define the permutation
\begin{equation}\label{eq:tau}
\tau_\C: V\otimes W \longrightarrow W\otimes V,
\end{equation}
which is a bilinear map sending $v\otimes w$ to $(-1)^{[v][w]}w\otimes v$
for any homogeneous elements $v\in V$ and $w\in W$.
\begin{remark}
We will extend expressions such as $(-1)^{[v][w]}w\otimes v$ to inhomogeneous
elements through linearity.
If $v=v_{\bar0}+v_{\bar1}$ and $w=w_{\bar0}+w_{\bar1}$ with
$v_{\bar i}\in V_{\bar i}$ and $w_{\bar i}\in W_{\bar i}$, then
$(-1)^{[v][w]}w\otimes v=w_{\bar0} \otimes v + w_{\bar1}\otimes(v_{\bar0}-v_{\bar1})=
w \otimes v_{\bar0} + (w_{\bar0}-w_{\bar1})\otimes v_{\bar1}$.
\end{remark}

The space $\Hom_\C(V, W)$ of homomorphisms is also $\Z_2$-graded with
\[
\begin{aligned}
\Hom_\C(V, W)_{\bar0}=\Hom_\C(V_{\bar0}, W_{\bar0})\oplus \Hom_\C(V_{\bar1}, W_{\bar1}),\\
\Hom_\C(V, W)_{\bar1}=\Hom_\C(V_{\bar0}, W_{\bar1})\oplus \Hom_\C(V_{\bar1}, W_{\bar0}).
\end{aligned}
\]
The $\Z_2$-graded dual vector space of $V$ is
$V^*:=\Hom_\C(V, \C)$. We denote $\Hom_\C(V, V)$ by $\End_\C(V)$.
Note that $\Hom_\C(V, W)\cong W\otimes_\C V^*$.

Whenever a basis is needed for a $\Z_2$-graded vector space,
we will always choose one which is homogeneous and ordered so that
the even basis elements precede the odd ones. We call such a basis an {\em
ordered homogeneous basis}. Assume that
$(v_1, \dots, v_m, v_{m+1}, \dots, v_{m+n})$ and $(w_1, \dots, w_k, w_{k+1}, \dots, w_{k+l})$
are such bases for
$V$ and for $W$ respectively, where $\sdim V=(m|n)$
and $\sdim W=(k|l)$.
Then $T\in \Hom_\C(V, W)$ acts on $V$ by
$T(v_b) = \sum_{a=1}^{k+l} w_a t_{a b}$.
Thus the map $T\mapsto(t_{a b})$ defines an
isomorphism between $\Hom_\C(V, W)$ and the $\Z_2$-graded vector space
$\cM(k|l\times m|n; \C)$ of $(k+l)\times(m+n)$ matrices, where the grading of $\cM(k|l\times m|n; \C)$
is defined as follows.
Write each matrix in the block form
$\begin{pmatrix} X & \Phi \\ \Psi & Y\end{pmatrix}$, where the sizes of
$X$, $\Phi$, $\Psi$ and $Y$ are respectively $k\times m$, $k\times n$, $l\times m$
and  $l\times n$. Then
\[
\cM(k|l\times m|n;\C)_{\bar0} = \left\{\begin{pmatrix} X & 0 \\ 0 & Y\end{pmatrix}\right\}, \qquad
\cM(k|l\times m|n;\C)_{\bar1} = \left\{\begin{pmatrix}0 & \Phi \\ \Psi & 0 \end{pmatrix}\right\}.
\]
We write $\cM(k|l;\C)$ for $\cM(k|l\times k|l;\C)$.

\subsection{Linear superalgebra}\label{sect:lin-alg}

Given a vector space $\Theta$, we denote by $T(\Theta)$ its tensor algebra, which is
$\Z_+$-graded with degree $1$ subspace $T^1(\Theta)=\Theta$.
Let $I$ be the 2-sided ideal of $T(\Theta)$ generated by the elements
$x y + y x$ for all $x, y\in \Theta$.  Since these elements are homogeneous, the ideal $I$ is graded.
The exterior algebra over $\Theta$ is the $\Z_+$-graded algebra defined by
$
\Lambda(\Theta) = T(\Theta)/I.
$
As a complex vector space, $\Lambda(\Theta)$ has dimension $2^N$, where $N=\dim\Theta$.
We endow $\Lambda(\Theta)$ with the $\Z_2$-grading in which
$\Lambda_{\bar0}$ (resp. $\Lambda_{\bar1}$)
is the direct sum of the homogeneous subspaces of even (resp. odd) degrees.
This superalgebra is called the {\em Grassmann algebra} of degree $N$
in the physics literature.
Choose any basis $\theta_i$ $(i=1, 2, \dots, N)$ for $\Theta$. Then we have
the following basis for $\Lambda(\Theta)$: for any sequence $A=(\alpha_1,\dots,\alpha_N)$ with
$\alpha_i=0, 1$, define
\begin{eqnarray}\label{eq:grass}
\theta^{A}=\theta_1^{\alpha_1} \theta_2^{\alpha_2}\dots\theta_N^{\alpha_N} \quad.
\end{eqnarray}
The $\theta^A$ form a basis of $\Lambda(\Theta)$.

Clearly, up to isomorphism, $\Lambda(\Theta)$ depends only on the degree $N=\dim\Theta$, and we therefore write
$\Lambda(N)$ for $\Lambda(\Theta)$ when $\dim \Theta= N$. For any flag
$0\subset\Theta^{(1)}\subset\Theta^{(2)}\subset\dots\subset\Theta^{(N-1)}\subset\Theta^{(N)}=\Theta$,
where $\dim\Theta^{(i)}=i$, we have the corresponding inclusions
$\C\subset\Lambda(1)\subset\Lambda(2)\subset\dots\subset\Lambda(N-1)\subset\Lambda(N)$
of Grassmann algebras. The Grassmann algebra
$\Lambda$ of infinite degree is defined by the direct limit of this direct system,
$\Lambda:=\lim\limits_{\longrightarrow}\Lambda(N).$
Besides the $\Z_2$-grading, $\Lambda$ has a $\Z_+$-grading $\Lambda=\oplus_{i=0}^\infty \Lambda_i$,
where $\Lambda_0=\C$. Write
$\Lambda_+=\oplus_{i\ge 1}\Lambda_i$.
For any $\lambda\in\Lambda$, denote by $\lambda_0$ its degree $0$ component.
Then we have the specialisation, or augmentation map,
\begin{eqnarray}\label{eq:specialisation}
\cR: \Lambda\longrightarrow\C, \quad \lambda\mapsto \lambda_0,
\end{eqnarray}
which is simply the projection of $\Lambda$ onto $\Lambda_0=\C$.

Throughout this paper,  all the (left, right and bi-) modules over Grassmann algebras
are assumed to be $\Z_2$-graded.
Since Grassmann algebras are super commutative  (i.e., $Z_2$-graded commutative),
any right (or left) module is naturally a bimodule.
For example, any right $\Lambda$-module $V_\Lambda$ can be turned
into a $\Lambda$-bimodule with a compatible left action defined by the composition
$
\Lambda\otimes_\C V_\Lambda\stackrel{\tau_\C}{\longrightarrow}
V_\Lambda\otimes_\C\Lambda\longrightarrow V_\Lambda,
$
where $\tau_\C$ is the permutation \eqref{eq:tau} in the present context,
and the second map is the right action.  In particular, we have
$\lambda v = (-1)^{[\lambda][v]} v\lambda$ for $\lambda\in\Lambda$ and $v\in V_\Lambda$.

\begin{remark}
Henceforth the term $\Lambda$-module will mean a $\Z_2$-graded $\Lambda$-bimodule.
\end{remark}

A map $T: V\longrightarrow W$ between $\Lambda$-modules $V$ and $W$
is homogeneous of degree $\bar\alpha$ if $T(V_{\bar\beta})\subset
W_{\overline{\alpha+\beta}}$ for all $\bar\beta$. We say that $T$
is $\Lambda$-linear if
$T(v \lambda)=T(v)\lambda$ for all $v\in V_\Lambda$ and $\lambda\in\Lambda$.
In particular this implies $T(\lambda v)=(-1)^{[\lambda][T]}\lambda T(v)$.
Denote by $\Hom_\Lambda(V, W)$ the $\Z_2$-graded vector space of
$\Lambda$-linear maps, and set $\End_\Lambda(V):=\Hom_\Lambda(V, V)$
and $V^*_\Lambda:=\Hom_\Lambda(V, \Lambda)$.
Note also that $\Hom_\Lambda(V, W)$ has a natural $\Lambda$-module structure, defined
for all $T\in\Hom_\Lambda(V, W)$ and $\lambda\in\Lambda$ by
\begin{equation}\label{eq:lambda-module}
\begin{aligned}
T\lambda=(-1)^{[\lambda][T]} \lambda T, &\quad & (\lambda T)(v)= \lambda T(v),&\quad
T\la(v)=(-1)^{[\lambda][v]}T(v)\lambda \quad
\end{aligned}
\end{equation}
for all  $T\in\Hom_\La(V,W)$ and $v\in V$.

For any $\Z_2$-graded vector space $V_\C$, we let $V(\Lambda)=V_\C\otimes_\C\Lambda$
be the corresponding free $\Lambda$-module.  We have the natural embedding
$V_\C\hookrightarrow V(\Lambda)$, $v\mapsto v\otimes 1$.
If $W_\C$ is another $\Z_2$-graded vector space, then
$
\Hom_\Lambda(V(\Lambda), W(\Lambda)) \cong \Hom_\C(V_\C, W_\C)\otimes_\C\Lambda.
$

Let $\cM(k|l\times m|n; \Lambda)$ denote the set of
$(k+l)\times (m+n)$ matrices over $\Lambda$, and
write $\cM(k|l; \Lambda)$ for $\cM(k|l\times k|l; \Lambda)$.
Then $\cM(k|l\times m|n; \Lambda)\cong\cM(k|l\times m|n;\C)\otimes_\C \Lambda$
as $\Z_2$-graded vector space.
We write a matrix $\tilde{T}$ in block form as
$\tilde{T}=\begin{pmatrix} A & B \\ C & D\end{pmatrix}$.
If $\tilde{T}$ is even (resp. odd), then the entries of $A$ and $D$ belong to $\Lambda_{\bar0}$
(resp. $\Lambda_{\bar1}$), and
the entries of $B$ and $C$ belong to $\Lambda_{\bar1}$ (resp. $\Lambda_{\bar0}$).

For any  $T \in \Hom_\Lambda(V(\Lambda), W(\Lambda))$, we write
$
T(v_b) = \sum_{a=1}^{k+l} w_a t_{a b}, \quad \text{$t_{a b}\in\Lambda$},
$
where $(v_a)$ and $(w_b)$ are homogeneous bases of $V_\C$ and $W_\C$ respectively.
This leads to the following isomorphism of $\Z_2$-graded vector spaces.
\begin{equation}\label{eq:matrices}
\Upsilon: \Hom_\Lambda(V(\Lambda), W(\Lambda))
\stackrel{\simeq}\longrightarrow \cM(k|l\times m|n; \Lambda),  \quad T\mapsto(t_{a b}).
\end{equation}
However, the $\Lambda$-module structure of
$\cM(k|l\times m|n; \Lambda)$ needs to be treated with care.
Define $\C$-linear maps
\begin{equation}\label{eq:act-mat}
\begin{aligned}
\triangleright: \Lambda \otimes \cM(k|l\times m|n; \Lambda) \longrightarrow \cM(k|l\times m|n; \Lambda), \\
\triangleleft: \cM(k|l\times m|n; \Lambda) \otimes\Lambda  \longrightarrow \cM(k|l\times m|n; \Lambda)
\end{aligned}
\end{equation}
respectively by
\begin{equation}\label{eq:act-mat1}
\lambda\triangleright M =  \left((-1)^{[w_a][\lambda]}\lambda M_{a b}\right),
\quad M \triangleleft\lambda =   \left((-1)^{[\lambda][v_b]}  M_{a b}\lambda\right),
\end{equation}
for all $M=(M_{a b})\in \cM(k|l\times m|n; \Lambda)$ and $\lambda\in\Lambda$.
The following easy lemma may be found in \cite[\S A.4]{SZ}.
\begin{lemma}
\begin{enumerate}\label{lem:superlin}
\item Given $T\in\Hom_\Lambda(V(\Lambda),W(\Lambda)$ define $\cM(T)\in  \cM(k|l\times m|n; \Lambda)$
by $\cM(T)=(t_{ab})$, where $Tv_b=\sum_a w_a t_{ab}$. Then $\cM(\la T) =\la\triangleright\cM(T)$ and
$\cM(T\la) =\cM(T)\triangleleft\la$.
\item \label{lem:module-mat}
Equation \eqref{eq:act-mat} defines a compatible $\Lambda$-bimodule structure
on $\cM(k|l\times m|n; \Lambda)$.
\item \label{lem:module-isom}
The map $\Upsilon$ defined by \eqref{eq:matrices}
is an isomorphism of $\Lambda$-bimodules, where the left and right $\Lambda$-actions on
$\Hom_\Lambda(V(\Lambda), W(\Lambda))$ are defined by \eqref{eq:lambda-module},
and those on $\cM(k|l\times m|n; \Lambda)$ by \eqref{eq:act-mat}.
\item When $W=V$, the map $\Upsilon$ is an isomorphism
of associative $\Lambda$-superalgebras, where multiplication in $\cM(k|l\times k|l;\Lambda)$ is
just multiplication of matrices.
\end{enumerate}
\end{lemma}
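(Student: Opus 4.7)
The plan is to establish part (1) by direct calculation, deduce part (2) formally from the super-commutativity of $\Lambda$ together with part (1), observe that part (3) is essentially a repackaging of part (1), and verify part (4) by a one-line calculation using $\Lambda$-linearity of endomorphisms.

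For part (1), I would unwind the definitions. Writing $Tv_b=\sum_a w_a t_{ab}$ and applying the left-action formula in \eqref{eq:lambda-module}, one gets
\[
(\lambda T)(v_b)=\lambda T(v_b)=\sum_a \lambda w_a t_{ab} = \sum_a (-1)^{[\lambda][w_a]} w_a\, \lambda t_{ab},
\]
where the last step uses the bimodule super-commutation $\lambda w_a=(-1)^{[\lambda][w_a]}w_a\lambda$ in $W(\Lambda)$. Reading off the coefficients of $w_a$ gives exactly $\lambda\triangleright\cM(T)$. Similarly,
\[
(T\lambda)(v_b)=(-1)^{[\lambda][v_b]}T(v_b)\lambda=\sum_a w_a (-1)^{[\lambda][v_b]} t_{ab}\lambda,
\]
which matches $\cM(T)\triangleleft\lambda$. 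No subtlety beyond bookkeeping the two sign sources (the $\lambda$-past-$w_a$ move and the $\lambda$-past-$v_b$ move).

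Part (2) is then immediate: the associativity $(\lambda\mu)\triangleright M=\lambda\triangleright(\mu\triangleright M)$ and $(\lambda\triangleright M)\triangleleft\mu=\lambda\triangleright(M\triangleleft\mu)$ reduce to checking that the sign prefactors collect correctly, which they do because the parity exponents $[w_a][\lambda]$ and $[\lambda][v_b]$ are additive in $\lambda$ (mod $2$) and involve independent indices $a,b$ on the two sides. Alternatively, one can use part (1) and transport the already-known bimodule structure on $\Hom_\Lambda(V(\Lambda),W(\Lambda))$ along the bijection $\Upsilon$; this is the slickest route and avoids any direct manipulation.

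For part (3), observe that $\Upsilon$ is a $\C$-linear bijection of $\Z_2$-graded vector spaces (standard); combining this with part (1) yields that $\Upsilon$ intertwines the left and right $\Lambda$-actions, hence is a $\Lambda$-bimodule isomorphism. Finally for part (4), take $T,S\in\End_\Lambda(V(\Lambda))$ with matrices $(t_{ab})$ and $(s_{ab})$. Because $T$ is $\Lambda$-linear in the sense of Section~\ref{sect:lin-alg}, no extra signs appear when $T$ is applied to the expression $\sum_c v_c s_{cb}$:
\[
(TS)(v_b)=T\!\left(\sum_c v_c s_{cb}\right)=\sum_c T(v_c) s_{cb}=\sum_{a,c} v_a\, t_{ac} s_{cb},
\]
so $\cM(TS)_{ab}=\sum_c t_{ac}s_{cb}$, which is ordinary matrix multiplication. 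The main (and only real) obstacle throughout is sign-tracking; there is no conceptual difficulty, and the key observation enabling part (4) is precisely that the $\Lambda$-linearity of $T$ eliminates any Koszul sign in the composition.
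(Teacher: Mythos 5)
Your proposal is correct and follows the same route as the paper: the paper's proof is simply the remark that all statements follow from (1), which is an easy calculation, and your write-up carries out precisely that calculation for (1) and then derives (2)--(4) from it (with a direct verification of (4) as a bonus).
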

All statements follow from (1), which is an easy calculation.
\subsection{Orthosymplectic superspace}\label{sect:orthosym}

Let $V_\C$ be a complex $\Z_2$-graded vector space.
We assume that $V_\C$ admits a non-degenerate even bilinear form
\[
(-, - )_\C: V_\C\times V_\C \longrightarrow \C,
\]
which is supersymmetric, that is,  $( u  , v )_\C =(-1)^{[u][v]} ( v,  u )_\C$
for all $u, v\in V_\C$.   Then the form is
symmetric on $(V_\C)_{\bar0}\times (V_\C)_{\bar0}$ and
skew symmetric on $(V_\C)_{\bar1}\times (V_\C)_{\bar1}$, and
satisfies $(V_{\bar0},  V_{\bar1})_\C=0=(V_{\bar1}, V_{\bar0})_\C$.
Also, by non-degeneracy, $\dim (V_\C)_{\bar1}=2n$ must be even.
We call this a nondegenerate {\em supersymmetric  form}.   Let
$
\eta = \begin{pmatrix} I_m & 0 \\ 0 & J\end{pmatrix},
$
where $I_m$ is the identity matrix of size $m\times m$ and $J$ is a skew symmetric matrix of size
$2n\times 2n$ given by
$J=diag(\sigma, \dots, \sigma)$ with $\sigma=\begin{pmatrix} 0 & -1 \\ 1 & 0\end{pmatrix}$.
Then there exists an ordered homogeneous basis $\cE=(e_1,  e_2, \dots, e_{m+2n}) $ for $V_\C$
such that
\begin{eqnarray}\label{eq:standard}
(e_a, e_b)=\eta_{a b}, \quad \text{for all $a, b$}.
\end{eqnarray}
We shall write such relations as $(\cE,\cE)=\eta$.

Let $V:=V_\C\otimes\Lambda$. We extend $(\ . \ , \ . )_\C$ to a $\Lambda$-bilinear form
$
( - , - ): V\times V\longrightarrow \Lambda,
$
which is even and nongenerate,  and is supersymmetric in the sense that
$(v, w)=(-1)^{[v][w]}(w, v)$ for all $v, w\in V$. We call $V$ an
orthosymplectic superspace, and call $\cE$
an {\em orthosymplectic basis} of $V_\C$ and of $V$.
Note that $(.,.)$ is $\Lambda$-bilinear in the sense that
for $\la,\la'\in\La$ and $v,v'\in V$, we have
\be\label{eq:bil}
(\la v, v'\la')=\la(v,v')\la'.
\ee

The following result generalises the Gram-Schmidt process to superspaces.
\begin{lemma}\label{lem:bs-constr}
 Let $V:=V_\C\otimes\Lambda$ be an orthosymplectic superspace
 of superdimension $\sdim V_\C=(m|2n)$.
\begin{enumerate}
\item Assume that $u$ belongs to the even subspace $V_{\bar0}$ of $V$ and $(u, u)=1$.
Then there exists an ordered homogeneous basis $B=(b_1, b_2, \dots, b_{m+2n})$
of the $\Lambda$-module $V$ such that $b_1=u$ and $(B, B)=\eta$.
\item Assume that $v, v'\in V_{\bar1}$ and $(v, v')=1$. Then there exists an ordered homogeneous basis
$C=(c_1, c_2, \dots, c_{m+2n})$ of the $\Lambda$-module $V$ such that $c_{m+2n-1}=v$
and $c_{m+2n}=v'$ and $(C, C)=\eta$.
\end{enumerate}
\end{lemma}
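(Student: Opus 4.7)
The strategy is super Gram--Schmidt, inducting on the rank $m+2n$. The essential tool is the augmentation $\cR \colon \Lambda \to \C$, which reduces statements about $\Lambda$-modules modulo $\Lambda_+$ to classical linear algebra on $V_\C$. Both parts of the lemma rest on a stronger ambient claim, proved by the same induction: \emph{every free $\Lambda$-module $W = W_\C \otimes \Lambda$ with non-degenerate even supersymmetric $\Lambda$-bilinear form admits an ordered homogeneous orthosymplectic basis.} The given $u$ (respectively the pair $v,v'$) is then inserted by hand at the first (respectively last two) position(s), and the induction supplies the remaining basis vectors in the orthogonal complement.

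For part (1), the $\Lambda$-linear functional $\phi_u(w) := (u,w)$ is surjective (since $\phi_u(u)=1$) and admits the section $\lambda\mapsto u\lambda$, yielding the orthogonal direct-sum decomposition $V = \Lambda u \oplus u^\perp$ of $\Lambda$-modules. The restricted form on $u^\perp$ is non-degenerate, for any $w\in u^\perp$ annihilating $u^\perp$ is orthogonal to $u$ as well (by supersymmetry and membership in $u^\perp$), hence to all of $V$, and so is zero. To produce an explicit $\Lambda$-basis, extend $\cR(u)\in (V_\C)_{\bar0}$ (which satisfies $(\cR(u),\cR(u))_\C = 1$) via ordinary Gram--Schmidt on $(V_\C)_{\bar 0}$ and ordinary Darboux on $(V_\C)_{\bar 1}$ to a classical orthosymplectic basis $(\cR(u), \bar e_2, \dots, \bar e_{m+2n})$ of $V_\C$, and set $b_i := \bar e_i\otimes 1 - u(u, \bar e_i\otimes 1) \in u^\perp$ for $i \geq 2$. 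Each $b_i$ reduces modulo $\Lambda_+$ to $\bar e_i$; a Nakayama-type argument — legitimate because every element of $V$ lies in $V_\C\otimes\Lambda(N)$ for some finite $N$, in which $\Lambda(N)_+$ is nilpotent — shows $(u, b_2, \dots, b_{m+2n})$ is a homogeneous $\Lambda$-basis of $V$, so $u^\perp$ is free of rank $(m-1|2n)$. The inductive hypothesis applied to $u^\perp$ supplies an orthosymplectic basis, which together with $b_1:=u$ gives the required $B$.

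Part (2) is parallel: the two functionals $(v,\cdot)$ and $(v',\cdot)$ yield a $\Lambda$-linear projection of $V$ onto the rank-$(0|2)$ submodule $\Lambda v \oplus \Lambda v'$, producing $V = (\Lambda v \oplus \Lambda v') \oplus \{v,v'\}^\perp$ with non-degenerate restricted form on the rank-$(m|2n-2)$ complement. The induction supplies an orthosymplectic basis of that complement, and $v, v'$ are appended in the last two positions, rescaling $v'$ by a scalar if needed to match the sign convention encoded in $\eta_{m+2n-1,m+2n}$.

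The main obstacle is the freeness of the orthogonal complement $u^\perp$ (and analogously $\{v,v'\}^\perp$) as a $\Lambda$-module of the predicted rank --- equivalently, that the classical basis of $V_\C$ built by ordinary Gram--Schmidt lifts through the super-commutative thickening $V_\C\otimes \Lambda \twoheadrightarrow V_\C$. This is precisely where the infinite-degree Grassmann structure $\Lambda = \varinjlim \Lambda(N)$ is indispensable: the nilpotence of each $\Lambda(N)_+$ underwrites a local Nakayama lemma, converting classical linear independence and spanning into their $\Lambda$-linear analogues. Once this is in place the induction unwinds in lock-step with the classical Gram--Schmidt (even part) and Darboux (odd part) constructions.
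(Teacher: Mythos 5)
Your proof is correct, and it takes a genuinely different route from the paper's.

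The paper works entirely ``in coordinates'': it expands each prospective basis vector in its $\Lambda$-degree components $v^{(k)} = v^{(k)}_0 + v^{(k)}_1 + \cdots$ and solves the constraint $(B,B)=\eta$ degree by degree. The two resulting equations \eqref{eq:orthosym} and \eqref{eq:orthosym2} are solved recursively, with the supersymmetry of the form used to split \eqref{eq:orthosym2} into a graded-symmetric system that admits the explicit solution \eqref{eq:T}, and termination follows from $u\in V_\C\otimes\Lambda(N)$ for finite $N$. Your argument instead proves a structural statement: the orthogonal complement $u^\perp$ is a free $\Lambda$-module of rank $(m-1|2n)$ on which the form restricts non-degenerately, established by exhibiting the explicit complementary basis $b_i = \bar e_i\otimes 1 - u(u,\bar e_i\otimes 1)$ and invoking a Nakayama-type lifting; a (mild) strengthening of the lemma then follows by induction on the superrank. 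Both arguments hinge on the same finiteness fact about the direct-limit Grassmann algebra, but they deploy it differently: the paper uses it to terminate a recursion, you use it to make the local lifting/Nakayama step legitimate. What your approach buys is conceptual clarity and modularity --- the non-degeneracy of the complement and the free direct-sum splitting are cleanly isolated, and the explicit recursion in \eqref{eq:T} is replaced by an induction on rank. What the paper's approach buys is explicitness: it yields a direct recursive formula for the lift without needing to state and carry along an auxiliary ``ambient claim'' about arbitrary free $\Lambda$-modules with non-degenerate even forms. One caveat worth flagging: your ambient claim is for a \emph{general} non-degenerate even supersymmetric $\Lambda$-bilinear form, not just one obtained by scalar extension from $V_\C$, and this generality is what makes the induction close --- $u^\perp$ carries such a form, but not visibly a scalar-extended one. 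You state this correctly but it is the load-bearing generalization and deserves emphasis.

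One small detail (not a gap in your proof, but worth noting): with the paper's normalization $\sigma = \bigl(\begin{smallmatrix}0 & -1\\ 1 & 0\end{smallmatrix}\bigr)$, one has $\eta_{m+2n-1,\,m+2n}=-1$, so taking $c_{m+2n-1}=v$, $c_{m+2n}=v'$ with $(v,v')=1$ does not literally give $(C,C)=\eta$ at that entry; as you observe, a sign adjustment is needed. This is a convention issue already present in the statement of part (2), not a flaw in your argument, and the paper in any case omits the proof of part (2).
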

The bases above are of course not unique.
\begin{proof} We prove part (1).  Any element $w \in V$ can be written
uniquely as a finite sum $w=w_0 + w_1 + w_2 + \dots$,
where $w_j\in V_\C\otimes_\C \Lambda_j$ with $ \Lambda_j$ being
the degree $j$ homogeneous subspace of $\Lambda$.
Expressing $u$ this way, we have $u_0\in (V_\C)_{\bar 0}$ and $(u_0,u_0)=1$. By a standard result in linear algebra,
there is a sequence $B'_0=(v^{(1)}_0, v^{(2)}_0, \dots, v^{(m+2n-1)}_0)$ of homogeneous elements of $V_\C$
such that $B_0:=(u_0, B'_0)$ forms an orthosymplectic basis of $V_\C$, that is $(B_0, B_0)=\eta$.
We shall show that there exists a sequence $B'=(v^{(1)},v^{(2)},\dots)$ of vectors in $V$ with degree zero specialisation
$B'_0$ such that the sequence $B:=(u,B')$ satisfies $(B, B)=\eta$.
Given any such sequence $B$,
let $g$ be the corresponding transition matrix from the basis $B_0$, so that $B=B_0 g$.
Since $B$ is finite, there is an integer $N$ such that all elements of $B$ belong to $V_\C\otimes\Lambda(N)$,
whence $g=1+\Theta$ for some even matrix $\Theta$ with entries belonging to
$\Lambda(N)_+=\sum_{i\ge 1} \Lambda(N)_i$.
Since $\Theta$ is nilpotent, $g$ is invertible, and so $B$ is a basis of $V$.

We now turn to the problem of finding such a sequence $B'=(v^{(1)},\dots,v^{(m+2n-1)})$.
Regard the higher degree terms $v^{(k)}_i$ ($i>1$) as unknowns.

Then $B=(u, B')$ satisfies $(B, B)=\eta$ if and only if
\begin{eqnarray*}
\begin{aligned} \sum_{i+j=p}( u_j, v^{(k)}_i)=0, \quad
\sum_{i+j=p}(v^{(k)}_i, v^{(\ell)}_j)=0, \quad \forall p\ge 1,\forall k,\ell\ge 1
\end{aligned}
\end{eqnarray*}
using the supersymmetry of the form $(\ , \ )$. Now re-arrange the terms of the equations to obtain,
for all $p\ge 1$ and for all $k,\ell$,
\begin{eqnarray}\label{eq:orthosym}
(u_0, v^{(k)}_p)&=- \sum_{i=1}^p(u_i, v^{(k)}_{p-i})\\ \label{eq:orthosym2}
 (v^{(k)}_0, v^{(\ell)}_p) +(v^{(k)}_p, v^{(\ell)}_0)
&= -\sum_{i=1}^{p-1}(v^{(k)}_i, v^{(\ell)}_{p-i}),
\end{eqnarray}
where we note that only $v^{(s)}_j$ with $j<p$ are present on the right sides of these equations.
Write $\bv_p=(v^{(1)}_p,v^{(2)}_p,\dots,v^{(m+2n-1)}_p)$, so that $\bv_0=B'_0$, and our task is to
solve the above equations for $\bv_p$, $p\geq 1$.
Since $B_0$ is a basis of $V_\C$, each ${\bf v}_p$  ($p\ge 1$) can be expressed as
$
{\bf v}_p = u_0 r_p + {\bf v}_0 T_p
$
for some $r_p\in \Lambda_p$ and $(m+2n-1)\times(m+2n-1)$ matrix
$T_p$ with entries in $\Lambda_p$.
Then equation \eqref{eq:orthosym} is equivalent to
$
r_p = - \sum_{i=1}^p(u_i, {\mathbf v}_{p-i}),
$
and hence
\begin{eqnarray}\label{eq:iterate}
{\bf v}_p =- u_0\sum_{i=1}^p(u_i, {\mathbf v}_{p-i}) + {\bf v}_0 T_p,  \quad p\ge 1.
\end{eqnarray}
Further, equation  \eqref{eq:orthosym2} is equivalent to
\[
(\eta' T_p)_{a b} +(-1)^{[a][b]}(\eta' T_p)_{b a}= - \sum_{i=1}^{p-1}(v^{(a)}_i, v^{(b)}_{p-i}),  \quad p\ge 1,
\]
where $\eta'=\begin{pmatrix}I_{m-1}&0\\ 0&J\end{pmatrix}$.
Using the supersymmetry of the form, we can re-write the right hand side as
$
  -\frac{1}{2} \left( \sum_{i=1}^{p-1}(v^{(a)}_i, v^{(b)}_{p-i})
+ (-1)^{[a][b]}\sum_{i=1}^{p-1}(v^{(b)}_i, v^{(a)}_{p-i})\right).
$

This shows that the equation has an evident (though not unique) solution
\begin{eqnarray}\label{eq:T}
(\eta' T_p)_{a b} =  -\frac{1}{2}\sum_{i=1}^{p-1}(v^{(a)}_i, v^{(b)}_{p-i}),     \quad p\ge 1,
\end{eqnarray}
which expresses $T_p$ in terms of ${\mathbf v}_j$ with $j<p$.
Substituting \eqref{eq:T} into \eqref{eq:iterate} we obtain a recursion relation which enables us to determine
the coefficients of each ${\mathbf v}_p$ with $p\ge 1$ with respect to the basis $u_0,{\mathbf v}_0$
in terms of $\bv_{p'}$ for $p'<p$.

It remains only to show that this process terminates, i.e. that ${\mathbf v_p=0}$ for $p>>0$.
For this, note that there exists some $N$ so large that  $u\in V_\C\otimes \Lambda(N)$,
and hence ${\mathbf v}_p\in V_\C\otimes \Lambda(N)$ for all $p$. Thus ${\mathbf v}_p=0$ for $p>>0$.
This proves part (1) of the lemma.

The proof of part (2) is similar and we omit the details.
\end{proof}

It is easily seen that the proof of the above lemma may be applied to prove the following more general result.

\begin{scholium}\label{sch:gs}
Let $K\supseteq \C$ be any field
such that the statement of Lemma \ref{lem:bs-constr} holds for $V(K):=V_\C\ot_\C K$.
Then Lemma \ref{lem:bs-constr} holds for $V_K:=V(K)\ot_\C\La\cong V_\C\ot_\C\La_K$, where
$\Lambda_K:=\La\ot_\C K\cong\lim\limits_{\longrightarrow}\Lambda_K(N)$ with $\La_K(N)=\wedge(K^N)$.
\end{scholium}
\begin{proof}
The proof of (1) proceeds exactly as above, with the first step following from the given hypothesis on $K$.
\end{proof}
\subsection{Classical supergroups}\label{sect:groups}

Let $V_\C$ be a $\Z_2$-graded vector space of superdimension $\sdim V_\C=(m|l)$.
As usual, we write
\[
V=V_\C\otimes_\C\Lambda, \quad V^*=\Hom_\Lambda(V, \Lambda)\cong V^*_\C\otimes_\C\Lambda.
\]

\subsubsection{The general linear supergroup}

The general linear Lie superalgebra $\gl(V_\C)$ over $\C$ is
$\End_\C(V_\C)$ endowed with a bilinear
Lie bracket defined for $X, Y\in \gl(V_\C)$ by
\[
[X, Y]=X Y - (-1)^{[X][Y]}Y X,
\]
where the right hand side is defined by composition of endomorphisms.
The general linear supergroup
$\GL(V)$ is the set
\[
\GL(V)=\{g\in \End_{\Lambda}(V)_{\bar0}\mid g \ \text{ is invertible}\}
\]
with multiplication defined by the composition of endomorphisms on $V$.

Let $\widetilde\gl(V)=\gl(V_\C)\otimes_\C\Lambda$ and regard it as a Lie superalgebra
over $\Lambda$ with a $\Lambda$-bilinear Lie bracket defined by
\[
[X\otimes\lambda, Y\otimes\mu]=[X, Y]\otimes (-1)^{[\lambda]([Y\otimes\mu])}\mu\lambda,
\]
for all $X, Y\in \gl(V_\C)$ and $\mu, \nu\in\Lambda$. Then $\gl(V)=(\gl(V_\C)\otimes_\C\Lambda)_{\bar0}$
forms a Lie subalgebra of $\widetilde\gl(V)$ over $\Lambda_{\bar 0}$, which will be referred to as the
Lie algebra of $\GL(V)$.
There is a natural $\widetilde\gl(V)$ action on $V$ given by
$(X\otimes\lambda).(v\otimes\mu)=
X.v\otimes (-1)^{[\lambda]([v\otimes\mu])}\mu\lambda$.
It restricts to an action of $\gl(V)$.

The following lemma is a generalisation of \cite[Proposition 5.2 (a), (c)]{SZ} to the Grassmann algebra
of infinite degree.
\begin{lemma}\label{lem:exp}
\begin{enumerate}
\item Given any $X\in \gl(V)$, let
$\exp(X):=\sum_{i=0}^\infty \frac{X^i}{i!}$.
Then $\exp(X)$  is a well defined automorphism of $V$ which lies in $\GL(V)$.
Hence there exists a map
\[
{\rm Exp}: \gl(V) \longrightarrow \GL(V), \quad X\mapsto \exp(X).
\]
\item The image of $\Exp$ generates $\GL(V)$.
\end{enumerate}
\end{lemma}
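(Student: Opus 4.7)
The plan is to reduce both statements to finite-dimensional matrix calculations by exploiting the key observation that any $X\in\gl(V)$, or any $g\in\GL(V)$, has all its matrix entries (with respect to a fixed ordered homogeneous basis $\cE$ of $V_\C$) lying in some finite Grassmann subalgebra $\Lambda(N)\subset\Lambda$. This reduces the problem to classical linear algebra and is the only serious use of the direct-limit structure of $\Lambda$.

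For part (1), I would fix $N$ so that every entry of $X$ lies in $\Lambda(N)$. Since $\Lambda(N)$ is a finite-dimensional $\C$-subalgebra, the $\Lambda$-linear operator $X$ restricts to a $\C$-linear endomorphism of the finite-dimensional subspace $V_\C\otimes_\C\Lambda(N')$ for each $N'\geq N$. Choosing any norm there, the estimate $\|X^k/k!\|\leq\|X\|^k/k!$ gives absolute convergence of $\sum_{k\geq 0}X^k/k!$; since $X$ commutes with $-X$, one has $\exp(X)\exp(-X)=\exp(0)=\id$, so $\exp(X)$ is an invertible $\C$-linear operator on $V_\C\otimes_\C\Lambda(N')$. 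Compatibility in $N'$ yields an invertible operator on $V=\lim\limits_{\longrightarrow}V_\C\otimes_\C\Lambda(N')$, and evenness and $\Lambda$-linearity are preserved by the partial sums and hence by the limit. This produces the map $\Exp$.

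For part (2), given $g\in\GL(V)$, I would apply the augmentation $\cR$ entrywise to obtain its classical part $g_0\in\End_\C(V_\C)_{\bar 0}\cong\gl(m,\C)\oplus\gl(l,\C)$. Since $\cR$ induces a $\C$-algebra homomorphism on matrices, $g_0$ inherits invertibility from $g$. Set $h:=g_0^{-1}g-\id$; then $h$ is even with all entries in $\Lambda_+$. Choosing $N$ so that these entries lie in $\Lambda(N)_+$, the identity $\Lambda(N)_+^{N+1}=0$ forces entries of $h^k$ to vanish for $k\geq N+1$, so $h$ is nilpotent in $\gl(V)$. Consequently
\[
L:=\sum_{k\geq 1}(-1)^{k+1}\frac{h^k}{k}\in\gl(V)
\]
is a finite sum with $\exp(L)=\id+h$. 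By the classical surjectivity of $\exp:\gl(r,\C)\to\GL(r,\C)$ (every invertible complex matrix admits a logarithm via Jordan form) we write $g_0=\exp(Y)$ for some $Y\in\gl(V_\C)_{\bar 0}\subset\gl(V)$, so
\[
g=g_0(\id+h)=\exp(Y)\exp(L),
\]
exhibiting $g$ as a product of two elements in the image of $\Exp$.

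The principal technical point is the nilpotence of $h$ in part (2); this is exactly where the observation that each specific $g$ involves only finitely many Grassmann generators, and hence lives in a matrix algebra over some $\Lambda(N)$ with $N<\infty$, becomes essential, matching the emphasis in the introduction on the direct-limit nature of $\Lambda$. Once this is in hand, everything else is either a standard Banach-space convergence argument or formal bookkeeping.
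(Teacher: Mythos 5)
Your proposal is correct and takes essentially the same route as the paper: for part (2) the paper also factors $g=g_0(1+x)$ into a classical part and a unipotent part with nilpotent $x$, uses the terminating series $\log(1+x)$ for the latter and ``a standard result in complex Lie groups'' (i.e.\ surjectivity of $\exp$ onto $\GL(m,\C)\times\GL(l,\C)$) for the former. For part (1) the paper reduces exactly as you do to a finite-degree Grassmann extension $\Lambda(N)$, delegating the convergence to a cited result of Scheunert--Zhang, whereas you spell out the finite-dimensional norm argument explicitly.
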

\begin{proof}
(1) For any fixed $X\in \gl(V)$, there exists some sufficiently large but finite $N$
such that $X\in \End_\C(V_\C)\otimes \Lambda_N$.  Then the relevant part of the proof of
\cite[Proposition 5.2]{SZ} goes through, and $\exp(X)$ is well defined in
$\End_\C(V_\C)\otimes \Lambda_N\subset \End_{\Lambda}(V)$.
Clearly $\exp(X)$ is even and has inverse $\exp(-X)$.

(2) Any element $g\in\GL(V)$ may be written as $g=g_0(1+x)$, where $g_0\in\GL(V_\C)$
and $x\in \End(V_\C)\ot\La_{\geq 1}$. It therefore suffices to show that both $g_0$ and
$1+x$ are in the group generated by the image of $\Exp$.
But for $g_0$ this follows from a standard result in
complex Lie groups. Moreover since $x$ is nilpotent, the series
$\log(1+x)=\sum_{i=1}^\infty(-1)^{i-1}\frac{x^i}{i}$
 terminates, and is equal to (say) $y\in\gl(V)$. Then $1+x=\Exp(y)$,
and we are done.
\end{proof}

\subsubsection{The orthosymplectic supergroup}\label{sect:orthosym-gp}

Suppose that $V:=V_\C\otimes\Lambda$ is an orthosymplectic superspace.
Then the orthosymplectic supergroup of $V$ is the subgroup
\[
\OSp(V):=\{g\in\GL(V)\mid (g v, g w)=(v, w), \  \forall v, w\in V\}
\]
of $\GL(V)$.
The Lie algebra of $\OSp(V)$ is the Lie algebra over $\Lambda_{\bar0}$ defined by
\[
\osp(V)=\{A\in\End_\Lambda(V)_{\bar0}\mid (A v, w)+(v, Aw)=0,\  \forall v, w\in V\}.
\]
It is easily seen that for $A\in \End_\Lambda(V)_{\bar0}$, we have $A\in\osp(V)$
if and only if $\exp(tA)\in\OSp(V)$ for all $t\in\C$.

Let $\osp(V_\C)$ be the orthosymplectic Lie superalgebra over $\C$ \cite{K},
which is the Lie sub-superalgebra of $\gl(V_\C)$  given by
\[
\osp(V_\C) =\{X\in \End_\C(V_\C) \mid (X v, w)_\C + (-1)^{[X][v]}(v, X w)_\C = 0,\  \forall v, w\in V_\C\}.
\]
Then $\osp(V)=(\osp(V_\C)\otimes\Lambda)_{\bar0}$.

Write $\OSp(V)_0={\rm O}((V_\C)_{\bar0})\times{\rm Sp}((V_\C)_{\bar1})$;
this group, which could be thought of as the degree zero part of $\OSp(V)$,
acts on $\osp(V_\C)$ by conjugation, and
both $\OSp(V)_0$ and $\osp(V_\C)$ act  naturally on $V_\C^{\otimes r}$ for all $r$.
Their actions are compatible in the following sense. For all $g\in\OSp(V)_0$,
$X\in \osp(V_\C)$ and $w\in V_\C^{\otimes r}$, we have
$$
g(X w) =Ad_g(X)(g w),
$$
where $Ad_g$ denotes the conjugation action of $g\in\OSp(V)_0$ on $\osp(V_C)$.

\begin{remark}\label{rem:HC-pair}
Note that $(\OSp(V)_0, \osp(V_\C))$ is a Harish-Chandra pair;
this could be used to give an alternative definition of  the orthosymplectic supergroup $\OSp(V)$
(see \cite{DM} for details).
\end{remark}

The following result is an immediate consequence of the definition of $\OSp(V)$.

\begin{lemma}\label{lem:bs-change} Let $\sdim V_\C=(m|2n)$, and
let $B=(b_1, b_2, \dots, b_{m+2n})$ be a homogeneous $\Lambda$-basis of $V$ such that
$b_i$ is even (resp. odd) if $i\le m$ (resp. $i>m$). If $(B, B)=\eta$,
there exists a unique element $g\in\OSp(V)$ such that
$$B=(g e_1, g e_2, \dots, g e_{m+2n}),$$ where
$(e_1, e_2, \dots, e_{m+2n})$ is the basis \eqref{eq:standard}.
\end{lemma}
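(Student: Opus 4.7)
The plan is to construct $g$ directly as the $\Lambda$-linear map sending $e_i$ to $b_i$ for each $i$, and then verify the required properties from the definition of $\OSp(V)$; the uniqueness assertion is immediate from the fact that any element of $\OSp(V)\subset\GL(V)\subset\End_\Lambda(V)$ is determined by its values on a $\Lambda$-basis.

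First I would define $g\in\End_\Lambda(V)$ by declaring $g(e_i)=b_i$ for $i=1,2,\dots,m+2n$ and extending $\Lambda$-linearly; this is well-defined and makes sense precisely because $\cE$ is a $\Lambda$-basis of $V$. Because of the parity hypothesis on $B$, each $b_i$ has the same parity as $e_i$, so $g$ is even, i.e.\ $g\in\End_\Lambda(V)_{\bar 0}$. Invertibility follows because $B$ is also a $\Lambda$-basis of $V$, so the $\Lambda$-linear map $h$ defined by $h(b_i)=e_i$ is a two-sided inverse of $g$; hence $g\in\GL(V)$.

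Next I would verify that $g$ preserves the form. For any $v,w\in V$, write $v=\sum_i e_i\,\lambda_i$ and $w=\sum_j e_j\,\mu_j$ with $\lambda_i,\mu_j\in\Lambda$ of appropriate parity. Using the $\Lambda$-bilinearity of the form \eqref{eq:bil} (together with the bimodule compatibility $\lambda u=(-1)^{[\lambda][u]}u\lambda$ from Section~\ref{sect:lin-alg}), one computes
\[
(v,w)=\sum_{i,j}(-1)^{[\lambda_i][e_i]}\lambda_i\,(e_i,e_j)\,\mu_j
=\sum_{i,j}(-1)^{[\lambda_i][e_i]}\lambda_i\,\eta_{ij}\,\mu_j.
\]
The same calculation applied to $gv=\sum_i b_i\,\lambda_i$ and $gw=\sum_j b_j\,\mu_j$ yields
\[
(gv,gw)=\sum_{i,j}(-1)^{[\lambda_i][b_i]}\lambda_i\,(b_i,b_j)\,\mu_j,
\]
and the two right-hand sides agree because $[b_i]=[e_i]$ and $(b_i,b_j)=\eta_{ij}=(e_i,e_j)$ by hypothesis. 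Thus $(gv,gw)=(v,w)$ for all $v,w\in V$, so $g\in\OSp(V)$.

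Finally, for uniqueness, suppose $g'\in\OSp(V)\subset\End_\Lambda(V)$ also satisfies $g'e_i=b_i$ for every $i$. Then $g'$ and $g$ agree on the $\Lambda$-basis $\cE$ and are both $\Lambda$-linear, so they coincide on all of $V$. I do not anticipate any real obstacle here: once the bookkeeping of parities and the bimodule conventions of Section~\ref{sect:lin-alg} is respected, each step is formal, and the only mildly delicate point is the sign-careful verification that bilinearity of $(\,,\,)$ propagates the identity $(B,B)=\eta=(\cE,\cE)$ to all of $V\times V$.
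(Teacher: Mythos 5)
Your proof is correct and amounts to the obvious direct argument: define $g\in\End_\La(V)$ by $g(e_i)=b_i$, extend $\La$-linearly, and check evenness, invertibility, form-preservation, and uniqueness directly from the hypotheses, using the bimodule compatibility $\la u=(-1)^{[\la][u]}u\la$ and the $\La$-bilinearity \eqref{eq:bil} of the form. The paper takes a slightly less direct route: it splits $B$ into its homogeneous $\Z_+$-degree components $B=B_0+B_1+\cdots$, observes that the degree-zero part $B_0$ is a $\C$-basis of $V_\C$ with $(B_0,B_0)=\eta$, produces $g_0\in\OSp(V)_0={\rm O}((V_\C)_{\bar0})\times{\rm Sp}((V_\C)_{\bar1})$ with $g_0\cE=B_0$ by classical linear algebra over $\C$, then constructs $g'\in\OSp(V)$ with $g'B_0=B$ (implicitly by the same kind of direct check you perform), and takes the composite $g'g_0$ (the paper writes $g_0g'$, which is a typo in the order of composition). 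That composite coincides with the map $e_i\mapsto b_i$ that you write down immediately. The paper's factorization costs extra bookkeeping but exhibits $g$ as a product of a classical element of the Harish-Chandra group $\OSp(V)_0$ and a factor congruent to the identity modulo the augmentation ideal of $\La$; this decomposition is not needed for the lemma itself, and your direct argument is shorter and works uniformly over $\La$ without reducing to the degree-zero case.
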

\begin{proof} Given the existence of $g$, uniqueness is trivial, so we prove existence.
Write $B=B_0+B_1+B_2+\dots$ as in the proof of Lemma \ref{lem:bs-constr}, so that
$B_p$ is the degree $p$ component of $B$, whose entries belong to
$V_\C\otimes \Lambda_p$. Then $B_0=(b_{1, 0}, b_{2, 0}, \dots, b_{m+2n, 0})$ satisfies $(B_0, B_0)=\eta$.
Therefore, there exists an even matrix $M'$ such that $B=B_0 M'$,
i.e., $b_i=\sum_{j} b_{j, 0} M'_{j i}$ for all $i$.  Let $g'\in \End_\Lambda(V)_{\bar0}$  be such that
$(g' b_{1, 0}, g' b_{2, 0}, \dots, g' b_{m+2n, 0})=B_0 M'$. Then it is clear that $g'\in\OSp(V)$.
Further, $B_0=(g_0 e_1, g_0 e_2, \dots, g_0 e_{m+2n})$ for some
element $g_0\in\OSp(V)_0$ by standard linear algebra. Hence $g=g_0 g'$ satisfies the required conditions.
\end{proof}

\section{Invariant theory for the general linear supergroup}\label{sect:inv-gl}
\subsection{First fundamental theorem}

Let $V^{\otimes_\Lambda r}=V\otimes_\Lambda \dots \otimes_\Lambda V$ ($r$ factors)
and write $T^r(V)= V^{\otimes_\Lambda r}$.  The general linear supergroup $\GL(V)$ acts on $T^r(V)$ by
$
g.w=g w_1\otimes...\otimes g w_r
$
for any $w=w_1\otimes...\otimes w_r$ and $g\in \GL(V)$.
The corresponding $\gl(V)$-action on $V^{\otimes_\Lambda r}$ is defined for all $X\in\gl(V)$ by
\be\label{eq:glvaction}
\begin{aligned}
X.w& = Xw_1\otimes w_2\otimes  \dots \otimes  w_r+ w_1\otimes X w_2\otimes  \dots \otimes  w_r\\
&+\dots + w_1\otimes w_2\otimes  \dots \otimes X w_r.
\end{aligned}
\ee
We denote the associated representations of both $\GL(V)$ and $\fg(V)$ on  $V^{\otimes_\Lambda r}$
by $\rho_r$.

The permutation map $\tau$ of \eqref{eq:tau} extends uniquely to a $\Lambda$-module map
\[
\tau: V\otimes_\Lambda V \longrightarrow
V\otimes_\Lambda V, \quad v\otimes w\mapsto (-1)^{[v][w]}w\otimes v.
\]
This defines a $\Z_2$-action on $V\otimes_\Lambda V$.
More generally, we define a $\Lambda$-linear action $\varpi_r$
of the symmetric group $\Sym_r$ of degree $r$ on $T^r(V)$ as follows. If $s_i$, $1\le i\le r-1$,
are the simple reflections which generate  $\Sym_r$, then for all $i$, we define $\varpi_r(s_i)$ by
\begin{eqnarray}\label{eq:sym}
\varpi_r(s_i): w \mapsto  w_1\otimes\dots \otimes \tau( w_i\otimes w_{i+1})\otimes \dots \otimes  w_r.
\end{eqnarray}
The group ring $\Lambda\Sym_r=\C\Sym_r\otimes_{\C}\Lambda$ is an associative superalgebra,
with $\C\Sym_r$ regarded as purely even. Extend the representation $\varpi_r$ of $\Sym_r$
$\Lambda$-linearly  to obtain an action of the super algebra $\Lambda\Sym_r$.

It is easy but important to observe that the actions of $\Lambda\Sym_r$ and  $\GL(V)$ on $T^r(V)$
commute with each other.
\begin{remark}\label{rem:ospclass}
The classical orthosymplectic Lie algebra $\osp(V_\C)$ acts on $V_\C^{\ot r}$ via the rule
$$
X.(w_1\ot\dots\ot w_r)=\sum_{i=1}^r(-1)^{[X]([w_1]+[w_2]+\dots[w_{i-1}])}w_1\ot\dots\ot  Xw_i\ot\dots\ot w_r.
$$
This action is compatible with that of $\gl(V)$ \eqref{eq:glvaction}. Moreover it is easily verified
that $\varpi(\C\Sym_r)$ commutes with $\gl(V_\C)$ on $V_\C^{\ot r}$.
\end{remark}

The following well-known result \cite{S1, BR}, is the super analogue of Schur-Weyl duality.
\begin{theorem}\label{thm:BR} Let $\End_{\GL(V)}(V^{\otimes_\Lambda r})
= \{\phi\in\End_\Lambda(V^{\otimes_\Lambda r}) \mid g\phi = \phi g, \ \forall g\in \GL(V) \}$. Then
$\End_{\GL(V)}(V^{\otimes_\Lambda r}) = \varpi_r(\Lambda\Sym_r)$.
\end{theorem}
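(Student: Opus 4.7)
The plan is to reduce Theorem \ref{thm:BR} to the classical super Schur--Weyl duality of Berele--Regev and Sergeev (\cite{BR, S0, S1}), which asserts that the supercentraliser of $\gl(V_\C)$ in $\End_\C(V_\C^{\otimes r})$ coincides with $\varpi_r(\C\Sym_r)$. The easy containment $\varpi_r(\Lambda\Sym_r) \subseteq \End_{\GL(V)}(V^{\otimes_\Lambda r})$ follows from the observation (recorded immediately before the theorem) that $\varpi_r(\C\Sym_r)$ commutes with $\GL(V)$, extended $\Lambda$-linearly.

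For the reverse inclusion, I would first use the canonical isomorphism $V^{\otimes_\Lambda r} \cong V_\C^{\otimes r}\otimes_\C\Lambda$ of $\Lambda$-bimodules and the consequent identification $\End_\Lambda(V^{\otimes_\Lambda r}) \cong \End_\C(V_\C^{\otimes r})\otimes_\C\Lambda$ of $\Lambda$-superalgebras, where the product on the right-hand side carries the expected Koszul sign. By Lemma \ref{lem:exp}, an element $\phi$ commutes with every $g\in\GL(V)$ if and only if $\phi X = X\phi$ for every $X\in\gl(V) = (\gl(V_\C)\otimes_\C\Lambda)_{\bar 0}$, an ungraded equality of operators since $\gl(V)$ is purely even.

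The central claim is then that, writing $\phi = \sum_j \phi_j\otimes\lambda_j$ with the $\lambda_j$ linearly independent in $\Lambda$, $\phi$ commutes with $\gl(V)$ if and only if each $\phi_j$ lies in the supercentraliser of $\gl(V_\C)$ in $\End_\C(V_\C^{\otimes r})$. The ``even half'' is immediate from $\gl(V_\C)_{\bar 0}\otimes 1 \subset \gl(V)$. For the ``odd half'' one tests against the elements $X\otimes\theta \in \gl(V)$ with $X\in\gl(V_\C)_{\bar 1}$ and $\theta\in\Lambda_{\bar 1}$; tracking Koszul signs in $\phi(X\otimes\theta) - (X\otimes\theta)\phi$ yields an identity in $\End_\C(V_\C^{\otimes r})\otimes_\C\Lambda$ of the schematic form $\sum_j \alpha_j(\phi_j X)\otimes\lambda_j\theta = \sum_j \beta_j(X\phi_j)\otimes\lambda_j\theta$, for explicit signs $\alpha_j,\beta_j\in\{\pm 1\}$ determined by the parities of $X$, $\lambda_j$ and $\phi_j$. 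Choosing $\theta$ to be any odd generator of $\Lambda$ not occurring in any of the finitely many $\lambda_j$ (possible precisely because $\Lambda$ has infinite degree) guarantees that the products $\{\lambda_j\theta\}$ are linearly independent in $\Lambda$, and comparing coefficients delivers the super-bracket condition $X\phi_j - (-1)^{[X][\phi_j]}\phi_j X = 0$.

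Granting the claim, one obtains $\End_{\GL(V)}(V^{\otimes_\Lambda r}) = \End_{\gl(V_\C)}(V_\C^{\otimes r})\otimes_\C\Lambda$, and the classical super Schur--Weyl theorem collapses this to $\varpi_r(\C\Sym_r)\otimes_\C\Lambda = \varpi_r(\Lambda\Sym_r)$. I expect the main technical hurdle to be the sign-bookkeeping in the odd half of the central claim, together with the essential appeal to the infinite-dimensionality of $\Lambda$ to decouple the summands $\phi_j\otimes\lambda_j$; this is consistent with the observation in the introduction that the proofs in this paper depend crucially on the infinite degree of the Grassmann algebra.
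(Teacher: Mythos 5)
Your proposal is correct and follows essentially the same route as the paper's proof: reduce from $\GL(V)$ to $\gl(V)$ via Lemma~\ref{lem:exp}, identify $\End_\Lambda(V^{\otimes_\Lambda r})\cong\End_\C(V_\C^{\otimes r})\otimes_\C\Lambda$, expand $\phi=\sum_j\phi_j\otimes\lambda_j$, exploit the infinite degree of $\Lambda$ to decouple the summands, and then invoke classical super Schur--Weyl duality for $\gl(V_\C)$. The only cosmetic difference is that you make the use of infinite degree concrete (test against $X\otimes\theta$ with $\theta$ an odd generator disjoint from the $\lambda_j$), whereas the paper phrases it more tersely as ``since $\Lambda$ is of infinite degree, the above equation holds if and only if\dots''.
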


\begin{proof} We provide a proof for the convenience of the reader. By Lemma \ref{lem:exp}(2),
 it suffices to prove that
\begin{equation}\label{eq:lie-alg}
\End_{\gl(V)}(V^{\otimes_\Lambda r}) = \varpi_r(\Lambda\Sym_r).
\end{equation}
We therefore turn to the proof of \eqref{eq:lie-alg}.
It is well known  \cite{BR} that a generalised form of Schur-Weyl duality holds for the complex
general linear superalgebra $\gl(V_\C)$, that is,
\begin{eqnarray}\label{eq:SW-C}
\End_{\gl(V_\C)}(V_\C^{\otimes r}) = \varpi_{\C, r}(\C\Sym_r),
\end{eqnarray}
where $\varpi_{\C, r}$ is the representation
of $\C\Sym_r$ on $V_\C^{\otimes r}$ defined in the same way as \eqref{eq:sym}, and
$\End_{\gl(V_\C)}(V_\C^{\otimes r}) =\{\phi\in \End_{\C}(V_\C^{\otimes r}) \mid X \phi
= (-1)^{[\phi][X]} \phi X, \  \forall X\in \gl(V_\C)\}$.  Since $\varpi_r(\Lambda\Sym_r)
=\varpi_{\C, r}(\C\Sym_r)\otimes\Lambda$, it suffices to show that $\End_{\gl(V)}(V^{\otimes_\Lambda r})=
\End_{\gl(V_\C)}(V_\C^{\otimes r})\otimes\Lambda$
in order to prove \eqref{eq:lie-alg}.

Now $\End_{\Lambda}(V^{\otimes_\Lambda r})
=\End_{\C}(V_\C^{\otimes r})\otimes\Lambda$, thus every $\phi\in\End_{\Lambda}(V^{\otimes_\Lambda r})$
can be expressed in the form $\phi=\sum_i \phi_i\otimes\lambda_i$ with $\phi_i\in\End_{\C}(V_\C^{\otimes r})$
and $\lambda_i\in\Lambda$, where we may take the $\lambda_i$ to be $\C$-linearly independent and homogeneous.
If $\phi\circ (X\otimes\lambda)- (X\otimes\lambda)\circ\phi=0$ for all $X\in \gl(V_\C)$ and all $\lambda\in\Lambda$
such that $[X]=[\lambda]$, we have
\[
\sum_i \left(\phi_i\circ X -(-1)^{[X][\phi_i]}X\circ\phi_i \right)\otimes (-1)^{[X][\lambda_i]}\lambda_i\lambda =0.
\]
Since $\Lambda$ is of infinite degree, the above equation holds if and only if
\begin{eqnarray}
\sum_i \left(\phi_i\circ X -(-1)^{[X][\phi_i]}X\circ\phi_i \right)\otimes (-1)^{[X][\lambda_i]}\lambda_i=0.
\end{eqnarray}
This is equivalent to $\phi_i\circ X -(-1)^{[X][\phi_i]}X\circ\phi_i=0$ for all $i$
since the $\lambda_i$ are linearly independent over $\C$.
That is $\phi_i\in \End_{\gl(V_\C)}(V_\C^{\otimes r})$ for all $i$.
This completes the proof.
\end{proof}

\begin{remark}
It is crucial for the above statement that $\Lambda$ has infinite degree.
If $\Lambda$ were replaced by $\Lambda(N)$ for finite $N$, then for any $t\ne 0$
of top degree and any odd $\lambda$ in $\Lambda(N)$, we would have $t\lambda=0$.
It would then follow that $\phi\otimes t$ is a $\GL(V)$-invariant for any
$\phi\in\End_{\gl(V_\C)_{\bar0}}(V_\C^{\otimes r})$, where
$\gl(V_\C)_{\bar0}=\gl((V_\C)_{\bar0})\oplus\gl((V_\C)_{\bar1})$ is the even subalgebra
of $\gl(V_\C)$.
\end{remark}

\begin{remark}
Theorem \ref{thm:BR}  is one side
of a double commutant theorem for $\gl(V_\C)$; that is, we also have
$
\End_{\Sym_r}(V_\C^{\otimes r}) = \rho_r(U(\gl(V_\C)))
$
in addition to \eqref{eq:SW-C}. This relies on the semi-simplicity of
$V_\C^{\otimes r}$ as a $\gl(V_\C)$-module for all $r$. This semi-simplicity
may be proved  \cite{GZ} by showing that the natural module $V_\C$
is unitarisable, which implies that the tensor modules
$V_\C^{\otimes r}$ are unitarisable.
\end{remark}

\begin{remark}
One can deduce from \eqref{eq:SW-C} that
$\End_{\gl(V_\C)}(V_\C^{\otimes r}\otimes {V_\C^*}^{\otimes s})$ is a quotient of the walled Brauer algebra
with parameter $m-n$ (see \cite{BS} for details).
\end{remark}

The dual superspace $V^*$ has a natural $\GL(V)$-module structure. For any
$\bar{v}\in V^*$ and $g\in \GL(V)$, $g.\bar{v}$ is defined by
$g.\bar{v}(w)= \bar{v}(g^{-1}w)$ for all $w\in V.$
This extends to $T^r(V^*):={V^*}^{\otimes_\Lambda r}$. Let
$\bar{v}=\bar{v}_1\otimes...\otimes\bar{v}_r$ and $g\in \GL(V)$,
then
$
g.\bar{v}=g.\bar{v}_1\otimes...\otimes g.\bar{v}_r.
$
We denote the corresponding representation by $\rho^*_r$.

Denote by $\gamma: T^r(V^*)\otimes_{\Lambda} T^r(V)\longrightarrow\Lambda$ the natural
pairing, that is, the $\Lambda$-linear function defined by
\begin{eqnarray}\label{eq:contr-tensor}
\gamma: \bar{v}_1\otimes...\otimes\bar{v}_r\otimes w_1\otimes...\otimes w_r\mapsto
(-1)^{J(v, w)} \bar{v}_1(w_1)\bar{v}_2(w_2)...\bar{v}_r(w_r),
\end{eqnarray}
where $J(v, w)$ is defined as follows. If the elements
$\bar{v}_i$ and $w_i$ are all homogeneous, let
$d_\mu=[w_\mu]([\bar{v}_{\mu+1}]+[\bar{v}_{\mu+2}]+\dots+[\bar{v}_r])$ with
$d_r=0$. Then $J(v, w)=\sum_{\mu=1}^r d_\mu$; extend this definition by
$\La$-multilinearity.

Theorem \ref{thm:BR} is equivalent to the following result, which is
another reformulation of \cite[Theorem 1.1]{S1}.
\begin{theorem}\label{thm:fft-gl}
Let $\Phi: T^r(V^*)\otimes_{\Lambda} T^s(V)\longrightarrow\Lambda$
be a $\Lambda$-linear
$\GL(V)$-invariant function. Then $\Phi\ne 0$ only when $r=s$,
and in this case, $\Phi$ belongs to
the $\Lambda$-span of functions $\gamma_\pi = \gamma\circ(\id\otimes\pi)$,
where $\pi\in \Sym_r$.
\end{theorem}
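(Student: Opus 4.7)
The plan is to reduce Theorem \ref{thm:fft-gl} to Theorem \ref{thm:BR} by converting $\Lambda$-linear invariant pairings $T^r(V^*)\otimes_\La T^s(V)\to\La$ into $\GL(V)$-equivariant $\La$-linear maps $T^s(V)\to T^r(V)$. Since $V$ is a finitely generated free $\La$-module, the canonical map $T^r(V^*)\to (T^r(V))^*_\La$ is an isomorphism of $\GL(V)$-modules (with appropriate Koszul signs), so the standard adjunction yields an isomorphism
\[
\Phi\longleftrightarrow\widetilde\Phi,\qquad \widetilde\Phi\in\Hom_\La\bigl(T^s(V),T^r(V)\bigr),
\]
which may be defined concretely, for a fixed homogeneous basis $(e_a)$ of $V_\C$ with dual basis $(e^a)$, by
\[
\widetilde\Phi(w)=\sum_{a_1,\dots,a_r}(-1)^{\epsilon(a,w)}\,e_{a_1}\otimes\cdots\otimes e_{a_r}\,\Phi\bigl(e^{a_1}\otimes\cdots\otimes e^{a_r}\otimes w\bigr),
\]
with the sign $\epsilon(a,w)$ chosen so that this map is $\La$-linear with respect to the bimodule structure recorded in \eqref{eq:lambda-module}. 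A direct verification, using only the definitions of the $\GL(V)$-actions on $V$, $V^*$ and their tensor powers, will show that $\Phi$ is $\GL(V)$-invariant if and only if $\widetilde\Phi$ is $\GL(V)$-equivariant.

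First I would handle the case $r\ne s$. The group $\GL(V)$ contains all nonzero scalar operators $c\cdot\id_V$ with $c\in\C^\times$, and these act on $T^r(V^*)\otimes_\La T^s(V)$ via $c^{s-r}$. Invariance of $\Phi$ forces $(c^{s-r}-1)\Phi=0$ for every $c\in\C^\times$, hence $\Phi=0$ as claimed. (Equivalently, under the adjunction, no nonzero $\GL(V)$-equivariant map can exist between $T^s(V)$ and $T^r(V)$ when $r\ne s$.)

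Next I would treat the main case $r=s$. Here Theorem \ref{thm:BR} gives
\[
\Hom_{\GL(V)}\bigl(T^r(V),T^r(V)\bigr)=\End_{\GL(V)}\bigl(V^{\otimes_\La r}\bigr)=\varpi_r(\La\Sym_r),
\]
so every $\GL(V)$-equivariant $\widetilde\Phi$ is of the form $\sum_\pi \la_\pi\,\varpi_r(\pi)$ with $\la_\pi\in\La$. It then remains to check that $\gamma_\pi=\gamma\circ(\id\otimes\pi)$ corresponds under the adjunction to $\varpi_r(\pi)$; for $\pi=e$ this is precisely the statement that $\gamma$ is the evaluation pairing witnessing $T^r(V^*)\cong(T^r(V))^*_\La$, and the general case follows by composing on the right with $\varpi_r(\pi)$ and using $\La$-linearity of the adjunction. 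Transporting the decomposition of $\widetilde\Phi$ back across the adjunction then expresses $\Phi$ as a $\La$-linear combination of the $\gamma_\pi$.

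The main obstacle will be the careful bookkeeping of Koszul signs: verifying that the adjunction map $\Phi\mapsto\widetilde\Phi$ respects the $\La$-bimodule structures and intertwines $\GL(V)$-invariance with $\GL(V)$-equivariance requires threading the permutation $\tau$ of \eqref{eq:tau} through every step, and the sign $J(v,w)$ in \eqref{eq:contr-tensor} is exactly tuned so that the identifications $\gamma_\pi\leftrightarrow\varpi_r(\pi)$ come out correctly. Once these signs are audited, the theorem follows formally from Theorem \ref{thm:BR} together with the scalar-action argument above.
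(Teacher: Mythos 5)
Your proposal is correct and takes essentially the same route as the paper: both reduce the statement to Theorem~\ref{thm:BR} by identifying $\Lambda$-linear invariant functionals on $T^r(V^*)\otimes_\Lambda T^r(V)$ with $\GL(V)$-equivariant endomorphisms of $T^r(V)$, and then observe that $\gamma$ is the pairing that realizes this identification. You spell out two points the paper leaves to the reader — the scalar-operator argument ruling out $r\ne s$, and the explicit adjunction formula with its sign bookkeeping — but the underlying strategy is the same.
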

\begin{proof}  We give a sketch of the easy proof here. It is clear that there are no nonzero functions if $r\ne s$.
When $r=s$, we have the following $\GL(V)$-module isomorphisms
\[
(T^r(V^*)\otimes_{\Lambda} T^r(V))^* \stackrel{\sim}{\longrightarrow}
T^r(V)\otimes_{\Lambda} T^r(V^*) \stackrel{\sim}{\longrightarrow}  \End_\Lambda (T^r(V)).
\]
Thus Theorem \ref{thm:BR} identifies the set of $\Lambda$-linear $\GL(V)$-invariant functions
with $\varpi_r(\Lambda\Sym_r)$. The theorem now follows from
the obvious fact that the function $\gamma$ defined by \eqref{eq:contr-tensor} is $\GL(V)$-invariant.
\end{proof}

\subsection{Polynomials and polynomial functions}\label{sect:poly}
Let $M_\C$ be a $\Z_2$-graded $\C$-vector space
of superdimension $\sdim M=(k|l)$, and let $M=M_\C\otimes_\C\Lambda$.
We shall define the graded symmetric algebra on $M$, and show that it may be used
to define the notion of polynomial functions on $M_{\bar0}$.

\begin{definition}\label{def:symalg}
Let $T(M)=\oplus_{r\geq 0}T^r(M)$ where $T^r(M)=M^{\ot_\Lambda r}$, be the tensor algebra on $M$.
This is a graded superalgebra.
Let $I(M)$ be the ideal of $T(M)$ generated by all elements of the form $u\ot v-(-1)^{[u][v]}v\ot u$,
where $u,v\in M$. The graded symmetric algebra (also referred to as the supersymmetric algebra)
 $S(M)=S_\Lambda(M)$ is defined by $S(M)=T(M)/I(M)$.
\end{definition}

Clearly $S(M)$ is an associative graded-commutative superalgebra. We write $S^r(M)$ for the image of
$T^r(M)$ in $S(M)$.

Recall \eqref{eq:sym} that the symmetric group $\Sym_r$ acts on $T^r(M)$; it is easily shown that

\begin{lemma}\label{lem:sympower}
We have $S^r(M)\cong T^r(M)/\Sym_r$.
\end{lemma}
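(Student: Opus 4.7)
My plan is to identify $S^r(M)$ with $T^r(M)/I^r(M)$, where $I^r(M):=I(M)\cap T^r(M)$, and then to recognise $I^r(M)$ as exactly the subspace of $T^r(M)$ quotiented out when passing to $\Sym_r$-coinvariants under the action $\varpi_r$ of \eqref{eq:sym}.

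First I would observe that, since the generators $u\otimes v-(-1)^{[u][v]}v\otimes u$ of the two-sided ideal $I(M)$ are homogeneous of tensor degree $2$, the ideal $I(M)$ is graded, and $I^r(M)$ is spanned over $\Lambda$ by elements
$$a\otimes_\Lambda\bigl(u\otimes_\Lambda v-(-1)^{[u][v]}v\otimes_\Lambda u\bigr)\otimes_\Lambda b,\qquad a\in T^i(M),\ b\in T^{r-i-2}(M),\ u,v\in M.$$

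The key observation is that $u\otimes v-(-1)^{[u][v]}v\otimes u=(\id-\tau)(u\otimes v)$ for the signed swap $\tau$ of \eqref{eq:tau}, so inserting this in position $(i+1,i+2)$, each generator above equals $x-\varpi_r(s_{i+1})(x)$ with $x=a\otimes u\otimes v\otimes b$. Hence $I^r(M)=L$, where $L$ is the $\Lambda$-submodule of $T^r(M)$ spanned by $\{x-\varpi_r(s_k)(x):x\in T^r(M),\ 1\le k\le r-1\}$; this is $\Lambda$-stable because each $\varpi_r(s_k)$ is even.

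Finally I would verify that $L$ coincides with $K$, the submodule spanned by $\{y-\varpi_r(\pi)(y):y\in T^r(M),\ \pi\in\Sym_r\}$; the containment $L\subseteq K$ is trivial, and the reverse follows by induction on the length of $\pi$. Working modulo $L$, the defining relation reads $[z]\equiv[\varpi_r(s_k)(z)]\pmod L$ for every $z\in T^r(M)$ and every simple reflection $s_k$; writing a reduced expression $\pi=s_{k_1}s_{k_2}\cdots s_{k_\ell}$ and iterating,
$$[y]\equiv[\varpi_r(s_{k_\ell})(y)]\equiv[\varpi_r(s_{k_{\ell-1}}s_{k_\ell})(y)]\equiv\cdots\equiv[\varpi_r(\pi)(y)]\pmod L,$$
so $y-\varpi_r(\pi)(y)\in L$. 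This yields $T^r(M)/I^r(M)=T^r(M)/L=T^r(M)/\Sym_r$, proving the lemma. There is no real obstacle here; the only point demanding care is matching the signs built into the definition of $I(M)$ with those built into $\varpi_r$, and this is guaranteed precisely by the formula \eqref{eq:tau} for $\tau$.
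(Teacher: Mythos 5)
Your proof is correct and supplies the standard argument that the paper omits, since the paper simply asserts the lemma with ``it is easily shown that.'' The crucial checkpoints — that the degree-$r$ graded piece of $I(M)$ is spanned by $a\otimes(u\otimes v - (-1)^{[u][v]}v\otimes u)\otimes b$, that each such element is exactly $x - \varpi_r(s_{i+1})(x)$ by matching the sign in $\tau$ with the sign in the generators of $I(M)$, and that the submodule of coboundaries of the simple reflections equals that of the whole group by induction on length — are all handled properly.
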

\begin{remark}\label{rem:syminv}
Since $T^r(M)/\Sym_r$ may be identified with $e(r)T^r(M)$, where $e(r)=
(r!)\inv\sum_{\sigma\in\Sym_r}\sigma\in\C \Sym_r$,
it follows that we also have
$$
S^r(M)\cong T^r(M)^{\Sym_r}.
$$
We therefore have the splitting $T^r(M)=e(r)T^r(M)\oplus (1-e(r))T^r(M)\cong S^r(M)\oplus (1-e(r))T^r(M)$.
Thus $S^r(M)$ is canonically identified with both a quotient and subspace of $T^r(M)$.
\end{remark}
Since the action of $\GL(M)$ on $T^r(M)$ commutes with that of $\Sym_r$, the next statement is clear.
\begin{corollary}\label{cor:gl-on-s}
The diagonal action of $\GL(M)$ on $T^r(M)$ descends canonically to an action on $S^r(M)$.
\end{corollary}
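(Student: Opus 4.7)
The plan is to leverage the commutation of the $\GL(V)$-action with the $\Sym_r$-action on $T^r(M)$, which is the observation made just before the corollary is stated, to show that the defining relations of $S^r(M)$ are preserved under the diagonal action.

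First I would verify the commutation of the two actions on $T^r(M)$ explicitly. Since $\Sym_r$ is generated by the simple reflections $s_1,\dots,s_{r-1}$, it suffices to check that for any $g\in\GL(M)$ and any simple reflection $s_i$, one has $\varpi_r(s_i)\circ\rho_r(g)=\rho_r(g)\circ\varpi_r(s_i)$ on a pure tensor $w_1\otimes\cdots\otimes w_r$. Both sides act trivially on the factors in positions $\ne i,i+1$, and on positions $i,i+1$ both sides yield $(-1)^{[w_i][w_{i+1}]} gw_{i+1}\otimes gw_i$ after unwinding the sign conventions, using that $g$ is even so $[gw_j]=[w_j]$.

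Second, I would translate this into the language needed for the quotient. The kernel of the canonical projection $T^r(M)\twoheadrightarrow S^r(M)$ is $I(M)\cap T^r(M)$, and by Lemma \ref{lem:sympower} (together with Remark \ref{rem:syminv}) this kernel coincides with $(1-e(r))T^r(M)$, where $e(r)=(r!)^{-1}\sum_{\sigma\in\Sym_r}\sigma$. Since the $\GL(M)$-action commutes with every element of $\Sym_r$, it commutes with $e(r)$, and hence preserves both $e(r)T^r(M)\cong S^r(M)$ and its complement $(1-e(r))T^r(M)$. This means the $\GL(M)$-action on $T^r(M)$ sends $I(M)\cap T^r(M)$ to itself, so descends to a well-defined $\GL(M)$-action on the quotient $S^r(M)$.

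Finally I would note that this descended action is canonical, in the sense that it is compatible with all the natural identifications in play: as a representation on the subspace $e(r)T^r(M)\subseteq T^r(M)$ it agrees with the restriction of $\rho_r$, and under the identification $S^r(M)\cong T^r(M)^{\Sym_r}$ it is also the restriction of $\rho_r$. There is essentially no obstacle here; the only point that needs any care is the explicit sign check in the first step, and this is routine since $g\in\GL(M)$ is even.
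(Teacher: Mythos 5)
Your proof is correct and takes essentially the same approach the paper has in mind: the paper's entire proof is the one-line observation immediately preceding the corollary (that the $\GL(M)$-action commutes with the $\Sym_r$-action), and you have simply made explicit why that observation implies descent, by identifying the kernel of $T^r(M)\twoheadrightarrow S^r(M)$ with $(1-e(r))T^r(M)$ (valid here since we are over $\C$) and noting that $\GL(M)$ commutes with the idempotent $e(r)$.
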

Now given the map $\gamma:T^r(M^*)\ot T^r(M)\to \La$ of \eqref{eq:contr-tensor} and
writing $\cF(M,\Lambda)$ for the $\Lambda$-module of functions $:M\to\Lambda$,
we have a $\La$-linear map $F^r:T^r(M^*)\to \cF(M,\Lambda)$ given by $F^r(\ol{\bf v})(v)=
\gamma(\ol{\bf v}\ot v\ot v\ot\dots\ot v)$,
where $\ol{\bf v}\in T^r(M^*)$ and $v\in M$.

Let $\tau_i$ be the involution $\tau$ (cf. \eqref{eq:sym}) acting on the $i,i+1$ components of $T^r(M^*)$.


\begin{lemma}\label{lem:symfn}
\begin{enumerate}
\item We have, for $\ol{\bf v}\in T^r(M^*)$ and $v\in M$,
$$
F^r(\tau_i(\ol{\bf v}))(v)=(-1)^{[v]}F^r(\ol{\bf v})(v).
$$
\item The map $F^r:T^r(M^*)\to \cF(M,\La)$ restricts to a map $F_r:S^r(M^*)\to \cF(M_{\bar0},\La)$. That is, we have
$\Lambda$-linear maps
$$
T^r(M^*)\overset{/\Sym_r}{\lr}S^r(M^*)\overset{F^r}{\lr}\cF(M_{\bar0},\La).
$$
\end{enumerate}
\end{lemma}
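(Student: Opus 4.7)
The plan is to prove (1) by a direct sign computation tracking how the swap $\tau_i$ on $T^r(M^*)$ interacts with (a) the sign $(-1)^{J(v,w)}$ in the definition of $\gamma$, and (b) the supercommutation of scalar factors in $\Lambda$; then (2) will be an immediate consequence of (1) together with Lemma \ref{lem:sympower}.

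For part (1), fix homogeneous $\bar v_1,\dots,\bar v_r\in M^*$ and a homogeneous $v\in M$, and set $\bar{\mathbf v}=\bar v_1\otimes\dots\otimes\bar v_r$. By definition $F^r(\bar{\mathbf v})(v) = (-1)^{J}\,\bar v_1(v)\cdots\bar v_r(v)$, where for $w_1=\dots=w_r=v$ one has
$$
J \;=\; [v]\sum_{\mu=1}^{r-1}\bigl([\bar v_{\mu+1}]+\dots+[\bar v_r]\bigr).
$$
Three sources of sign appear when we replace $\bar{\mathbf v}$ by $\tau_i(\bar{\mathbf v})$: the Koszul sign $(-1)^{[\bar v_i][\bar v_{i+1}]}$ coming from $\tau_i$ itself; the change in $J$, which is confined to the $\mu=i$ term of the sum and amounts to $[v]([\bar v_i]+[\bar v_{i+1}])$; and the sign $(-1)^{([\bar v_i]+[v])([\bar v_{i+1}]+[v])}$ obtained by transposing the two scalar factors $\bar v_{i+1}(v)$ and $\bar v_i(v)$ inside the supercommutative product in $\Lambda$, so that the result can be compared with the original product $\bar v_1(v)\cdots\bar v_r(v)$. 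Expanding the last exponent gives $[\bar v_i][\bar v_{i+1}]+[v]([\bar v_i]+[\bar v_{i+1}])+[v]$ (using $[v]^2=[v]$ in $\Z_2$). Summing the three exponents, the cross terms $[\bar v_i][\bar v_{i+1}]$ and $[v]([\bar v_i]+[\bar v_{i+1}])$ each appear twice and cancel, leaving the single surviving term $[v]$. Hence
$$
F^r(\tau_i(\bar{\mathbf v}))(v)=(-1)^{[v]}F^r(\bar{\mathbf v})(v),
$$
and the identity extends to inhomogeneous arguments by $\Lambda$-multilinearity.

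For part (2), restrict to $v\in M_{\bar0}$, so $[v]=0$. Part (1) then yields $F^r(\tau_i(\bar{\mathbf v}))(v)=F^r(\bar{\mathbf v})(v)$ for every simple transposition $\tau_i$, hence $F^r(-)(v)$ is $\Sym_r$-invariant on $T^r(M^*)$. By Lemma \ref{lem:sympower} the quotient map $T^r(M^*)\twoheadrightarrow S^r(M^*)$ identifies $S^r(M^*)$ with the coinvariants $T^r(M^*)/\Sym_r$, so $F^r$ factors uniquely through $S^r(M^*)$ and defines a $\Lambda$-linear map $F_r:S^r(M^*)\to\cF(M_{\bar0},\Lambda)$, as required.

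The only non-routine step is the sign cancellation in part (1); once one is careful to record that the supercommutation of $\bar v_i(v)$ and $\bar v_{i+1}(v)$ takes place in $\Lambda$ (not in $M^*$), the identity $[v]^2=[v]$ delivers the extra factor $(-1)^{[v]}$ cleanly. No further subtlety arises in part (2).
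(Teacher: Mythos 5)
Your proof is correct and takes the same route as the paper: the paper declares part (1) a ``straightforward calculation, left to the reader'' and then derives (2) from (1) together with Lemma~\ref{lem:sympower}, exactly as you do. Your contribution is to carry out that sign bookkeeping explicitly, and the cancellation you record (the Koszul sign from $\tau_i$, the change $J'-J=[v]([\bar v_i]+[\bar v_{i+1}])$, and the supercommutation sign $(-1)^{([\bar v_i]+[v])([\bar v_{i+1}]+[v])}$ in $\Lambda$, leaving exactly $(-1)^{[v]}$) is accurate, using $[\bar v_j(v)]=[\bar v_j]+[v]$ and $[v]^2=[v]$ in $\Z_2$.
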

\begin{proof}
The first part is a straightforward calculation, which is left to the reader.
Given (1) it follows that the composition of $F^r$ with restriction to
$M_{\bar0}$  is constant on orbits of $\Sym_r$. In view of Lemma
\ref{lem:sympower}, this implies (2). Note that we here abuse notation
by also writing $F^r$ for the induced map on $S^r(M^*)$.
\end{proof}

\begin{definition}\label{def:polyfn}
The image of the map $F^r$ is called the space of polynomial functions of degree $r$ on $M_{\bar0}$.
It is denoted $\cP^r[M_{\bar0}]$. Specifically, a function $f:M_{\bar0}\to\La$ is polynomial of degree $r$
if there is an element ${\ol\phi}\in T^r(V^*)$ such that for
 $v\in M_{\bar0}$, $f(v)=\gamma(\ol\phi\ot v\ot\dots\ot v)$,
where $\gamma$ is the pairing \eqref{eq:contr-tensor}.
\end{definition}

\begin{remark} \begin{enumerate}
\item We shall make use of the fact that
polynomial maps $M_{\bar0}\lr \La$ restrict to polynomial maps $M_\C\ot\La(N)\lr\La(N)$
for each positive integer $N$,
where $M_\C\ot\La(N)$ and $\La(N)$ are considered as finite dimensional affine spaces over $\C$.
However the ring structure of
$\cP^r[M_{\bar0}]$ is independent of this interpretation of polynomial functions.
\item In the interpretation of elements of $\cP^r[M_{\bar0}]$ as polynomial maps on
finite dimensional affine spaces, each
function $f\in\cP^r[M_{\bar0}]$ has degree $r$ in the sense that $f(\alpha x)
=\alpha^rf(x)$ for $\alpha\in\La_{\bar0}$ and $x\in M_{\bar0}$.
\item In view of (2) we write
$\cP[M_{\bar0}]=\oplus_{r=0}^\infty\cP^r[M_{\bar0}]=\im(\oplus_{r=0}^\infty F^r):T(M^*)\lr\cF(M_{\bar0},\La)$,
and note that the sum is indeed direct.
\item With the same abuse of notation as above, when convenient, we shall think of $\cP[M_{\bar0}]$ as
$\im(F^{\bullet}:=\oplus_{r=0}^\infty F^r):S(M^*)\lr\cF(M_{\bar0},\La)$, where $S(M^*)=S_\La(M^*)$ is
the graded symmetric algebra
defined in Definition \ref{def:symalg}.
\end{enumerate}
\end{remark}

\begin{proposition}\label{prop:poly-polyfn}
The map $F^\bullet:S(M^*)\lr\cP[M_{\bar0}]$ is an isomorphism of graded-commutative graded $\La$-algebras
(which will also be referred to as associative superalgebras).
If $\la\in\La_{\bar0}$ and $f\in\cP^r[M_{\bar0}]$ then for $v\in M_{\bar0}$, $f(\la v)=\la^rf(v)$.
\end{proposition}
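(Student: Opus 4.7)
My plan is to establish in turn that $F^\bullet$ is a homomorphism of graded-commutative graded $\La$-algebras, that it is injective, and that it satisfies the homogeneity $f(\la v)=\la^r f(v)$; surjectivity onto $\cP[M_{\bar0}]$ is immediate from the definition of the latter as the image of $F^\bullet$. Injectivity is the only substantial step. Homogeneity follows from the $\La$-bilinearity \eqref{eq:bil} of $\gamma$ applied to $\gamma(\bar\phi\otimes(\la v)^{\otimes r})$: since $\la\in\La_{\bar0}$ and $v\in M_{\bar0}$ are both even, $\la v=v\la$, and one factor of $\la$ can be extracted from each of the $r$ tensor slots with no sign penalty, producing $\la^r f(v)$. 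That $F^\bullet$ respects multiplication follows by unfolding the definitions on monomials, with the key observation that supercommutative multiplication in $S(M^*)$ matches pointwise multiplication of functions on $M_{\bar0}$; once more, the sign $(-1)^{J(v,w)}$ in \eqref{eq:contr-tensor} trivialises because every input vector has parity zero.

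For injectivity, suppose $\bar\phi\in S^r(M^*)$ satisfies $F^r(\bar\phi)\equiv 0$ on $M_{\bar0}$; via Remark \ref{rem:syminv} I identify $\bar\phi$ with its super-symmetric lift in $T^r(M^*)^{\Sym_r}$. The first step is a polarization: for $v_1,\dots,v_r\in M_{\bar0}$ and complex scalars $t_1,\dots,t_r\in\C$, $\sum t_i v_i\in M_{\bar0}$, so
\[
F^r(\bar\phi)\Bigl(\sum_{i=1}^r t_i v_i\Bigr)=\sum_{(k_1,\dots,k_r)\in\{1,\dots,r\}^r}t_{k_1}\cdots t_{k_r}\,\gamma(\bar\phi\otimes v_{k_1}\otimes\cdots\otimes v_{k_r})=0.
\]
Treated as a polynomial in $t_1,\dots,t_r\in\C$ with values in $\La$, every coefficient must vanish. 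The coefficient of $t_1 t_2\cdots t_r$ is $\sum_{\sigma\in\Sym_r}\gamma(\bar\phi\otimes v_{\sigma(1)}\otimes\cdots\otimes v_{\sigma(r)})$; since the $v_i$ are all even no signs arise when reordering them, while the super-symmetry of $\bar\phi$ combined with the $\Sym_r$-equivariance of $\gamma$ makes each term equal to $\gamma(\bar\phi\otimes v_1\otimes\cdots\otimes v_r)$. Hence $\gamma(\bar\phi\otimes v_1\otimes\cdots\otimes v_r)=0$ for every $v_1,\dots,v_r\in M_{\bar0}$.

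Next I fix an ordered homogeneous basis $(e_\alpha)_{\alpha=1}^{k+l}$ of $M_\C$ with dual basis $(\bar e_\alpha)$, and write $\bar\phi=\sum_\alpha \mu_\alpha\,\bar e_{\alpha_1}\otimes\cdots\otimes\bar e_{\alpha_r}$ over multi-indices $\alpha=(\alpha_1,\dots,\alpha_r)$ with coefficients $\mu_\alpha\in\La$ placed on the left. For a given multi-index $\beta$ I set $v_i:=e_{\beta_i}\la_i$ with $\la_i\in\La$ of parity $[e_{\beta_i}]$, so that $v_i\in M_{\bar0}$. Because each $v_i$ is even the signs in \eqref{eq:contr-tensor} vanish, and the identity $\bar e_\alpha(e_\beta\la)=\delta_{\alpha\beta}\la$ gives
\[
\gamma(\bar\phi\otimes v_1\otimes\cdots\otimes v_r)=\mu_\beta\,\la_1\la_2\cdots\la_r,
\]
which must vanish for all admissible $\la_i$. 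Choosing $N$ so large that every $\mu_\alpha$ lies in $\La(N)$, I specialise each $\la_i$ to a Grassmann generator $\theta_{N+i}$ (when $e_{\beta_i}$ is odd) or to a product $\theta_{N+i}\theta_{N+r+i}$ (when $e_{\beta_i}$ is even). The resulting monomial $\la_1\cdots\la_r$ is a nonzero basis element of $\La$ in generators of index exceeding $N$, and multiplication by it is injective on $\La(N)$. This forces $\mu_\beta=0$ for every $\beta$, and hence $\bar\phi=0$ in $S^r(M^*)$.

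The principal obstacle I anticipate is the clean execution of the polarization step in the super-algebraic setting --- specifically, checking that restricting $t_i$ to $\C$ is enough to extract coefficients in $\La$, and that the super-symmetry of $\bar\phi$ interacts with the permutation of the all-even tuple $(v_1,\dots,v_r)$ to yield exactly $r!\,\gamma(\bar\phi\otimes v_1\otimes\cdots\otimes v_r)$ with no residual signs. Both are resolved by the disappearance of parity signs on inputs in $M_{\bar0}$; after that, the infinite-degree nature of $\La$ --- the same feature crucial to Theorem \ref{thm:BR} --- converts pointwise vanishing on $M_{\bar0}$ into the vanishing of every coefficient $\mu_\beta$.
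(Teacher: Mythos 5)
Your proof is correct, and it takes a route that is recognisably different from the paper's.  The paper works directly with the monomial basis $\phi_1^{m_1}\cdots\phi_{k+l}^{m_{k+l}}$ of the quotient $S^r(M^*)$, computes $F^r(f)(v)$ for $v=\sum_i b_i\la_i$ explicitly as the expression \eqref{eq:frv}, fixes the Grassmann-variable exponents, specialises only the odd $\la_j$ to fresh Grassmann generators $e_{N+j}$, and finishes with a classical polynomial-vanishing argument in the remaining even $\la_i\in\C$.  You instead work with the symmetric lift in $T^r(M^*)^{\Sym_r}$, polarise over complex $t_1,\dots,t_r$ to isolate the multilinear form $\gamma(\bar\phi\otimes v_1\otimes\cdots\otimes v_r)$, and then specialise all slots simultaneously — odd ones to single Grassmann generators, even ones to degree-two Grassmann products in generators of disjoint, high index — so that vanishing of $\mu_\beta\,\la_1\cdots\la_r$ directly forces $\mu_\beta=0$.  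The two arguments both exploit the infinite degree of $\La$ and (implicitly) characteristic zero; the paper's uses it only for the odd slots and falls back on ordinary polynomial vanishing over $\C$ for the even ones, whereas yours treats the two parities uniformly via Grassmann separation at the cost of the extra polarisation step.  A further small advantage of your route is that the injectivity proof is carried out entirely at the level of the pairing $\gamma$, without invoking the multiplicativity of $F^\bullet$, whereas the paper's derivation of \eqref{eq:frv} quietly relies on it.  Minor caveats: the assertions that $F^\bullet$ is a homomorphism and that $\gamma$ is $\Sym_r$-equivariant should strictly be verified (as in Lemma~\ref{lem:symfn}) rather than simply asserted, though the paper is equally terse on this point, and you should note that extracting the coefficient of $t_1\cdots t_r$ uses the invertibility of $r!$.
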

\begin{proof}
It is clear that $F^\bullet$ is a surjective homomorphism. Hence it remains only to show that $F^\bullet$ is injective,
i.e. that $F^r$ is injective for each $r$.

 Let $b_1,\dots,b_{k+l}$ be a homogeneous basis of $M$, and let $\phi_1,\dots,\phi_{k+l}$ be the dual basis of $M^*$,
so that $\phi_i(b_j)=\delta_{ij}$.

 For $i=1,2,\dots,k+l$ write $[i]=[b_i]=[\phi_i]$, so that
$[i]=\bar0$ if $1\leq i\leq k$ and $[i]=\bar1$ if $k+1\leq i\leq k+l$. Then $T^r(M^*)$ has $\La$-basis
$\{\phi_{i_1}\ot\dots\ot\phi_{i_{k+l}}\mid 1\leq i_j\leq k+l\}$, which is homogeneous, with
$[\phi_{i_1}\ot\dots\ot\phi_{i_{k+l}}]=\sum_j[i_j]$.
Now up to sign, the image of $\phi_{i_1}\ot\dots\ot\phi_{i_{k+l}}$ in $S^r(M)$ depends only on the
number $m_i$ of occurrences of $\phi_i$ among the $\phi_{i_j}$, $j=1,\dots,r$. Note also that
if $[i]=\bar1$, then $\phi_i\ot\phi_i\mapsto 0\in S(M^*)$. It follows that $S^r(M^*)$ has $\Lambda$-basis
$\{\phi_1^{m_1}\phi_2^{m_2}\dots\phi_{k+l}^{m_{k+l}}\mid m_1+\dots+m_{k+l}=r\}$,
where $\phi_1^{m_1}\phi_2^{m_2}\dots\phi_{k+l}^{m_{k+l}}=
\eta_r(\phi_1\ot\dots\ot\phi_1\ot\phi_2\ot\dots\ot\phi_2\ot\phi_3\ot\dots\ot\phi_{k+l})$, with $\phi_i$
occurring $m_i$ times,
and $m_i\leq 1$ if $k+1\leq i\leq k+l$. Here $\eta_r$ is the natural map $:T^r(M^*)\lr S^r(M^*)$.
This could equally be expressed structurally by the equation
$S(M^*)\cong S(M_{\bar0}^*)\ot_\La\wedge(M_{\bar1}^*)$.

The elements of $S^r(M^*)$ may therefore be uniquely represented by expressions

\be\label{eq:poly}
f=\sum_{m_1+m_2+\dots+m_{k+l}=r}\la_{m_1,m_2,\dots,m_{k+l}}
\phi_1^{m_1}\phi_2^{m_2}\dots\phi_{k+l}^{m_{k+l}},
\ee
 where
$\la_{m_1,m_2,\dots,m_{k+l}}\in\La$, $m_i\in\Z_{\geq 0}$, and $m_i=0$ or $1$ if $k+1\leq i\leq k+l$.
Conversely each such expression defines an element of $S^r(M^*)$.

The value of $F^r(f)$ ($f$ as above) on $v=\sum_i b_i\la_i\in M_{\bar0}$ is given by
\be\label{eq:frv}
F^r(f)(v)=\sum_{m_1+m_2+\dots+m_{k+l}=r}\la_{m_1,m_2,\dots,m_{k+l}}\la_1^{m_1}
\la_2^{m_2}\dots\la_{k+l}^{m_{k+l}}.
\ee
Suppose that $f\in\ker(F^r)$, i.e. that
 $F^r(f)(v)=0$ for all $v\in M_{\bar0}$. We wish to show that $\la_{m_1,m_2,\dots,m_{k+l}}=0$
for all $m_1,m_2,\dots,m_{k+l}$.
 Fix $m_{k+1},\dots,m_{k+l}$ and write $s=r-\sum_{i=k+1}^{k+l}m_i$.
Now let $N$ be any integer such that for all $m_1,\dots,m_{k+l}$, $\la_{m_1,\dots,m_{k+l}}\in\La(N)$.
In \eqref{eq:frv}, take $\la_j=e_{N+j}\in\La(N+j)$ for $j=k+1,\dots,k+l$, (note that
since $v\in M_{\bar0}$, $\la_j\in\La_{\bar1}$ for $j>k$). Then for
 $\la_i\in\La(N)$ for $i=1,\dots,k$, putting $F^r(f)=0$ in \eqref{eq:frv}
(and maintaining the chosen values of $m_{k+1},\dots,m_{k+l}$) yields that
$$
\sum_{m_1+\dots+m_k=s}\la_{m_1,m_2,\dots,m_{k+l}}\la_1^{m_1}\la_2^{m_2}\dots\la_{k}^{m_{k}}=0
$$
for all $\la_1,\dots,\la_k\in\La(N)_{\bar0}$. This implies $\la_{m_1,m_2,\dots,m_{k+l}}=0$ for all sequences
$m_1,\dots,m_k$ such that $m_1+\dots+m_k=s$  (it suffices to take $\la_i\in\C$ for this statement).
Since this is true for all sequences $m_{k+1},\dots, m_{k+l}$, it follows that $f=0$.

The second statement is evident from the above proof.
\end{proof}
\begin{remark}\label{rem:density}
\begin{itemize}
\item Note that the above proof shows that if a polynomial function $f$ vanishes on the set
$\sum_i \la_i m_i$, where $m_i$ runs over a homogeneous basis of $M$ and $\la_i\in\C$ if $[m_i]=\bar0$,
then $f=0$. We therefore say that this set (and any other set with this property) is {\em Zariski dense} in $M_{\bar0}$.
\item We shall refer to the variables $\phi_i$ in \eqref{eq:poly} as either `polynomial variables' (if $1\leq i\leq k$)
or `Grassmann variables' (if $k+1\leq i\leq k+l$). A polynomial variable may have an arbitrary integer exponent, whereas
a Grassmann variable has exponent either $0$ or $1$. Polynomial variables will also be referred to
as `even variables'.
\end{itemize}
\end{remark}
We shall require the following observation later.

\begin{lemma}\label{lem:polyscal}
Let $M=M_\C\ot_\C\La$ be as above, and suppose $M$ is an orthosymplectic superspace with form $(-,-)_M$.
Let $W=M\oplus M$; this is also an orthosymplectic superspace in the obvious way.
Then the function $q:v\oplus w\mapsto (v,w)_M$ ($v,w\in M$) lies in $\cP^2[W_{\bar0}]$.
\end{lemma}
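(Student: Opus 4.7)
The plan is to exhibit an explicit element $\ol\phi\in T^2(W^*)$ with $F^2(\ol\phi)=q$. Fix an orthosymplectic basis $(e_a)_{a=1}^{m+2n}$ of $M_\C$ as in \eqref{eq:standard}, with dual basis $(\phi_a)$ of $M_\C^*$, and let $p_1,p_2:W\to M$ denote the two projections, with induced injections $p_i^*:M^*\hookrightarrow W^*$. The natural candidate is
\[
\ol\phi=\sum_{a,b}(-1)^{[e_a]}\eta_{ab}\,p_1^*(\phi_a)\ot p_2^*(\phi_b)\in T^2(W^*),
\]
where $\eta_{ab}=(e_a,e_b)_M\in\C$. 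Since $\eta_{ab}\ne 0$ forces $[e_a]=[e_b]$, the tensor $\ol\phi$ is even, with scalar coefficients in $\C\subset\La$.

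To verify $F^2(\ol\phi)=q$, I would evaluate both sides on an arbitrary $u=v\oplus w\in W_{\bar 0}$. Since $[u]=\bar 0$, the sign $(-1)^{[u][\bar v_2]}$ appearing in the defining formula \eqref{eq:contr-tensor} for $\gamma$ is trivial, so $F^2(\ol\phi)(u)$ reduces to $\sum_{a,b}(-1)^{[e_a]}\eta_{ab}\,\phi_a(v)\,\phi_b(w)$. On the other side, expand $v=\sum_a e_a x_a$ and $w=\sum_b e_b y_b$ (so that $\phi_a(v)=x_a$ and $\phi_b(w)=y_b$), and use the $\La$-bilinearity \eqref{eq:bil} together with the super-commutativity $e_a x_a=(-1)^{[e_a][x_a]}x_a e_a$, noting $[x_a]=[e_a]$ since $v\in M_{\bar 0}$. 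This gives the same expression $\sum_{a,b}(-1)^{[e_a]}\eta_{ab}\,x_a y_b$ for $(v,w)_M$, completing the verification.

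The argument is essentially a bookkeeping exercise in super sign conventions; the only mildly delicate step will be the identity $(v,w)_M=\sum_{a,b}(-1)^{[e_a]}\eta_{ab}\,x_a y_b$ for $v,w\in M_{\bar 0}$, which rests on \eqref{eq:bil} and the evenness of the form. Beyond that the proof is formal, driven by the explicit formula for $F^2$ in Definition \ref{def:polyfn} together with the decomposition $W^*=p_1^*(M^*)\oplus p_2^*(M^*)$. A coordinate-free alternative would observe that $(-,-)_M$ corresponds to an even element of $\Hom_\La(M\ot_\La M,\La)$, hence (for the free $\La$-module $M$) to an element of $M^*\ot_\La M^*$, which is then pushed into $T^2(W^*)$ via $p_1^*\ot p_2^*$; but invoking a super duality isomorphism carries its own sign bookkeeping, so the basis computation above seems cleanest.
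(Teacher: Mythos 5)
Your proof is correct and follows essentially the same route as the paper's: exhibit an explicit element of $T^2(W^*)$ and verify directly that $F^2$ sends it to $q$. The only cosmetic difference is that you use an orthosymplectic basis with the Gram matrix $\eta$, whereas the paper uses an arbitrary homogeneous basis together with its graded dual and the maps $\phi_x\colon y\mapsto(x,y)_M$; both amount to the same $(m+2n)$-term tensor.
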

\begin{proof}
Let $b_1,\dots b_{k+l}$ be a homogeneous basis of $M$, regarded as the first summand of $W$,
 and let $b_1',\dots b_{k+l}'$ be its graded dual basis,
so that $(b_i,b_j')_M=(-1)^{[i]}\delta_{ij}$, where $[i]=[b_i]=[b_i']$. Similarly, let $(c_i),(c_i')$ be dual bases
of the second summand of $W$. Let $v,w\in M_{\bar0}$, and write
$v=\sum_ib_i\la_i$, and $w=\sum_ic_i'\mu_i$, where for each $i$, $[\la_i]=[\mu_i]=[i]$ since $v,w\in M_{\bar0}$.
Then $(v,w)_M=\sum_i\la_i\mu_i$.

For any element $x\in M$ define $\phi_x\in M^*$ by $\phi_w(v)=(w,v)_M$. Then
$$
\begin{aligned}
F^2(\phi_{b_i}\ot\phi_{c_i'})(v\oplus w)=&\gamma(\phi_{b_i}\ot\phi_{c_i'}\ot (v\oplus w)\ot (v\oplus w))=
(-1)^{[v\oplus w][i]}(b_i,v)(c_i',w)\\
=&(-1)^{[i]}\mu_i\la_i=\la_i\mu_i.\\
\end{aligned}
$$
It follows that $q=F^2(\sum_i \phi_{b_i}\ot\phi_{c_i'})\in\cP^2[W_{\bar0}]$.
\end{proof}




\subsection{Polynomial functions on endomorphism algebras}\label{ssec:polyfn}
We now take $V_\C$ to be a $\Z_2$-graded vector space and apply the considerations of the last subsection to the
superspace $M_\C=\End_\C(V_\C)$.

Consider the following left actions of $\GL(V)$ on $\cP^r[\End_\Lambda(V)_{\bar0}]$,
\[
\begin{aligned}
&L: \GL(V)\times\cP^r[\End_\Lambda(V)_{\bar0}]\longrightarrow\cP^r[\End_\Lambda(V)_{\bar0}],
\quad L: (g, f)\mapsto L_g.f, \\
&R: \GL(V)\times\cP^r[\End_\Lambda(V)_{\bar0}]\longrightarrow\cP^r[\End_\Lambda(V)_{\bar0}],
\quad R: (g, f)\mapsto R_g.f,
\end{aligned}
\]
where
\[
(L_g.f)(X)= f(g^{-1} X), \quad (R_g.f)(X)=f(Xg), \quad \forall X\in\End_\Lambda(V)_{\bar0}.
\]
Note that these actions evidently commute with each other, i.e. for $g\in\GL(V)$, $L_gR_g=R_gL_g$.

Observe that the map $\gamma$ of \eqref{eq:contr-tensor} defines a canonical perfect pairing
which canonically identifies $T^r(V^*)$ with $T^r(V)^*$. We shall use this identification below
without comment.

For any linear form $L\in T^r(V^*)$, define the function
$\Psi_L\in \cF(\End_\Lambda(V)_{\bar0},\La)\ot_\La \cF(T^r(V),\La)
(\cong \cF(\End_\Lambda(V)_{\bar0}\times T^r(V),\La)$ by
\be\label{eq:defpsiL}
\Psi_L(X, w)=L(X.w),
\ee
for all $X\in \End_\Lambda(V)_{\bar0}$ and $w\in T^r(V)$,
where,  for $w= w_1\otimes...\otimes  w_r$, $X.w=X w_1\otimes...\otimes X w_r$.

\begin{lemma}\label{lem:psipoly}
For any $L\in T^r(V^*)$, the function $\Psi_L\in
\cP^r[\End_\Lambda(V)_{\bar0}]\otimes_\Lambda T^r(V)^*$.
That is, $\Psi_L(X,w)$ is linear in $w$, and polynomial of degree $r$ in $X$.
\end{lemma}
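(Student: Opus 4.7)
The plan is to verify the two assertions separately: for each fixed $X\in \End_\Lambda(V)_{\bar 0}$, the function $w\mapsto \Psi_L(X,w)$ is $\Lambda$-linear on $T^r(V)$; and for each fixed $w\in T^r(V)$, the function $X\mapsto \Psi_L(X,w)$ is polynomial of degree $r$ on $\End_\Lambda(V)_{\bar 0}$.

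Linearity in $w$ is essentially automatic. Since $X$ is even, the diagonal action $X^{\ot r}: T^r(V)\to T^r(V)$, $w\mapsto X.w$, is $\Lambda$-linear of degree $\bar 0$ with no sign complications; composing with the $\Lambda$-linear form $L$ yields a $\Lambda$-linear functional on $T^r(V)$, i.e.\ an element of $T^r(V)^*$.

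For the polynomiality in $X$, the plan is to construct, for each fixed $w\in T^r(V)$, an explicit element $\bar\phi_w\in T^r(M^*)$, where $M=\End_\Lambda(V)=\End_\C(V_\C)\ot_\C\Lambda$, such that
\[
\Psi_L(X,w)=\gamma\bigl(\bar\phi_w\ot X\ot\cdots\ot X\bigr)\qquad\text{for all }X\in M_{\bar 0};
\]
by Definition \ref{def:polyfn} this exhibits $\Psi_L(\cdot,w)$ as an element of $\cP^r[M_{\bar 0}]$. By $\Lambda$-linearity it suffices to treat decomposable $L=L_1\ot\cdots\ot L_r$ and $w=w_1\ot\cdots\ot w_r$. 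The natural construction is to set $\bar\phi_i(Y):=L_i(Yw_i)$ for $Y\in M$, which defines a $\Lambda$-linear form on $M$ of parity $[L_i]+[w_i]$, and then to take $\bar\phi_w=\bar\phi_1\ot\cdots\ot\bar\phi_r\in T^r(M^*)$. Extending bilinearly in $L$ and $w$ handles the general case.

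The main technical obstacle -- and the only real bookkeeping in the argument -- is matching the sign factors in the identity $\gamma(\bar\phi_w\ot X^{\ot r})=L(X.w)$. This is made tractable by the fact that every $M$-argument of $\gamma$ here is the \emph{even} element $X$, so all sign contributions $J(\cdot,\cdot)$ from \eqref{eq:contr-tensor} which involve the parities of these factors vanish; the residual signs depend only on the parities $[\bar\phi_i]=[L_i]+[w_i]$. On the other hand, the signs appearing in $L(Xw_1\ot\cdots\ot Xw_r)$ depend only on $[L_i]$ and $[Xw_j]=[w_j]$ (again because $X$ is even), and a direct comparison shows they agree with those produced by $\gamma$. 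Once this is verified, $\Lambda$-linearity of the assignment $(L,w)\mapsto\bar\phi_w$ in both arguments establishes that $\Psi_L\in\cP^r[\End_\Lambda(V)_{\bar 0}]\ot_\Lambda T^r(V)^*$, as claimed.
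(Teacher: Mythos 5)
Your strategy is the same as the paper's: exhibit, for each fixed $w$, an element of $T^r(M^*)$ (with $M=\End_\Lambda(V)$) whose image under $F^r$ is $\Psi_L(-,w)$, thus landing in $\cP^r[M_{\bar0}]$ by Definition~\ref{def:polyfn}. The paper does this by defining $\phi\in T^r(M)^*$ directly as the functional $\phi(X_1\ot\cdots\ot X_r):=L(X_1w_1\ot\cdots\ot X_rw_r)$, so that $F^r(\phi)(X)=\gamma(\phi\ot X^{\ot r})=\phi(X^{\ot r})=\Psi_L(X,w)$ holds tautologically, with no signs to check; the identification $T^r(M^*)\cong T^r(M)^*$ is via $\gamma$ and the signs are internal to it. Your variant is to build the element of $T^r(M^*)$ explicitly as $\bar\phi_w=\bar\phi_1\ot\cdots\ot\bar\phi_r$ with $\bar\phi_i(Y)=L_i(Yw_i)$ for decomposable $L,w$; this is a legitimate alternative but forces the sign bookkeeping you describe, and there your argument is wrong as stated.

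The claimed identity $\gamma(\bar\phi_w\ot X^{\ot r})=L(X.w)$ does \emph{not} hold. On the left, since every $M$-slot carries the even element $X$, the sign $J$ in \eqref{eq:contr-tensor} has all terms $d_\mu=[X](\cdots)=0$, so $\gamma(\bar\phi_w\ot X^{\ot r})=\bar\phi_1(X)\cdots\bar\phi_r(X)=L_1(Xw_1)\cdots L_r(Xw_r)$ with \emph{no} sign. On the right, unwinding $L$ as a functional via $\gamma$ gives $L(Xw_1\ot\cdots\ot Xw_r)=(-1)^{J_2}L_1(Xw_1)\cdots L_r(Xw_r)$ where $J_2=\sum_\mu [Xw_\mu]\bigl([L_{\mu+1}]+\cdots+[L_r]\bigr)=\sum_\mu [w_\mu]\bigl([L_{\mu+1}]+\cdots+[L_r]\bigr)$, which is generically nonzero. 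A concrete check for $r=2$ with $[L_1]=\bar0$, $[L_2]=\bar1$, $[w_1]=\bar1$, $[w_2]=\bar0$ gives a sign of $-1$ on the right and $+1$ on the left. So the assertion ``a direct comparison shows they agree'' is false. The error is easily repaired, since $J_2$ is a constant for fixed decomposable $L,w$: set $\bar\phi_w:=(-1)^{J_2}\,\bar\phi_1\ot\cdots\ot\bar\phi_r$ and the identity holds, so the conclusion $\Psi_L(-,w)=F^r(\bar\phi_w)\in\cP^r[M_{\bar0}]$ survives. But the proof as written contains a false intermediate claim, and you should either supply the correcting sign or, more cleanly, avoid the decomposition altogether and define the element of $T^r(M^*)$ by its values on $T^r(M)$ as the paper does.
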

\begin{proof}
We need to show that for fixed $X\in  \End_\Lambda(V)_{\bar0}$, the map $\Psi_L(X,-):w\mapsto \Psi_L(X,w)$
is linear, and that for each $w\in T^r(V)$, the map
$\Psi_L(-,w):X\mapsto \Psi_L(X,w)$ belongs to $\cP^r[\End_\Lambda(V)_{\bar0}]$.
The first statement is clear from the definition \eqref{eq:defpsiL}.

To prove that $\Psi_L(-,w)\in\cP^r[\End_\Lambda(V)_{\bar0}]$, note that since the space of polynomial
functions of degree $r$ is closed under $\La$-linear combinations, it suffices to take $w= w_1\otimes...\otimes  w_r$,
where $w_i\in V$. Define $\phi\in T^r(\End_\Lambda(V)^*)=T^r(\End_\Lambda(V))^*$ by
$$
\phi(X_1\ot X_2\ot\dots\ot X_r):=L(X_1w_1\ot X_2w_2\ot\dots\ot X_rw_r).
$$
It is now easily verified that in the notation of Lemma \ref{lem:symfn}(2),
$\Psi_L(-,w)=F^r(\phi)\in\cP^r[\End_\Lambda(V)_{\bar0}]$.
\end{proof}
Now define the following $\GL(V)$-actions on $\cP^r[\End_\Lambda(V)_{\bar0}]\otimes_\Lambda T^r(V^*)$:
$$
\hat{L}_g=L_g\otimes \id\text{ and }\hat{R}_g=R_g\otimes\rho_r^*(g)\;\;\;\forall g\in \GL(V).
$$
It follows since $L$ and $R$ commute, that $\hat{L}$ commutes with $\hat{R}$. Thus
the invariant subspace
\begin{eqnarray}\label{eq:Gamma}
\Gamma_r:=\Big(\cP^r[\End_\Lambda(V)_{\bar0}]\otimes_\Lambda T^r(V^*)\Big)^{\hat{R}(\GL(V))}
\end{eqnarray}
is a left $\GL(V)$-module under the action $\hat{L}$.
\begin{remark}
This construction is analogous  to that of \cite[Theorem 4.2]{SZ}.
\end{remark}

The next result is an important link in our development.

\begin{proposition}\label{prop:Borel-Weil}
Let $\Psi$ be the $\Lambda$-linear map
$\Psi: T^r(V^*)\longrightarrow \cP^r[\End_\Lambda(V)_{\bar0}]\otimes_\Lambda T^r(V^*)$
given by $\Psi (L)= \Psi_L$.
Then the image of $\Psi$ is contained in $\Gamma_r$, and $\Psi:T^r(V^*)\lr\Gamma_r$
is an isomorphism of $\GL(V)$-modules.
\end{proposition}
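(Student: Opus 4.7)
The plan is to verify four things in sequence: (a) $\Psi_L\in\Gamma_r$ for every $L\in T^r(V^*)$; (b) $\Psi$ intertwines the $\rho^*_r$-action on $T^r(V^*)$ with the $\hat L$-action on the target; (c) $\Psi$ is injective; (d) $\Psi$ surjects onto $\Gamma_r$. Items (a)--(c) are direct computations, and the real work lies in (d), which requires a short density argument.

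For (a), I will first write out $\hat R_g=R_g\ot\rho^*_r(g)$ in the functional description $\phi:\End_\La(V)_{\bar0}\to T^r(V^*)$ as $(\hat R_g\phi)(X)(w)=\phi(Xg)\bigl(\rho_r(g^{-1})w\bigr)$, and then compute
$$
(\hat R_g\Psi_L)(X)(w)=L\bigl((Xg).\rho_r(g^{-1})w\bigr)=L(X.w)=\Psi_L(X)(w),
$$
the factors of $g$ and $g^{-1}$ cancelling in each tensor slot (with no signs, as $g$ is even). Item (b) is analogous: $(\hat L_g\Psi_L)(X)(w)=\Psi_L(g^{-1}X)(w)=L\bigl(g^{-1}.(X.w)\bigr)=(\rho^*_r(g)L)(X.w)=\Psi_{\rho^*_r(g)L}(X)(w)$, so $\hat L_g\circ\Psi=\Psi\circ\rho^*_r(g)$. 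For (c), $\Psi_L=0$ forces in particular $\Psi_L(\id)(w)=L(w)=0$ for every $w\in T^r(V)$, hence $L=0$.

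For (d), given $\Phi\in\Gamma_r$, I define $L:=\Phi(\id)\in T^r(V^*)$. Rewriting the $\hat R$-invariance of $\Phi$ in the form $\Phi(Xg)(w)=\Phi(X)\bigl(\rho_r(g)w\bigr)$ and setting $X=\id$ yields $\Phi(g)(w)=L\bigl(\rho_r(g)w\bigr)=\Psi_L(g)(w)$ for every $g\in\GL(V)$. Thus $\Phi$ and $\Psi_L$ already agree on the subset $\GL(V)\subset\End_\La(V)_{\bar0}$; what remains is to upgrade this to equality on the entire space.

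For the upgrade, fix $X\in\End_\La(V)_{\bar0}$ and $w\in T^r(V)$ and consider
$$
P(t):=(\Phi-\Psi_L)(X+t\,\id)(w)\in\La.
$$
Since $\Phi-\Psi_L\in\cP^r[\End_\La(V)_{\bar0}]\ot_\La T^r(V^*)$, expanding $(X+t\,\id)^{\otimes r}$ in the formula of Definition \ref{def:polyfn} shows that $P(t)$ is a polynomial in $t\in\C$ of degree at most $r$ with coefficients in $\La$. The body of $X+t\,\id$ is $X_0+t\,\id\in\End_\C(V_\C)_{\bar0}$, which is invertible for all but finitely many $t\in\C$; for such $t$ one has $X+t\,\id\in\GL(V)$ and hence $P(t)=0$. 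Expanding the $\La$-coefficients of $P$ in any $\C$-basis of $\La$ reduces the vanishing of $P$ to the elementary fact that a univariate complex polynomial with infinitely many roots is identically zero, whence $P\equiv 0$; evaluating at $t=0$ gives $(\Phi-\Psi_L)(X)(w)=0$. The main obstacle is precisely this translation trick: it replaces any delicate Zariski-density statement for $\GL(V)$ inside $\End_\La(V)_{\bar0}$ over the Grassmann algebra with a standard one-variable polynomial argument over $\C$.
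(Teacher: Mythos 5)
Your proof is correct, and its overall architecture --- equivariance of $\Psi$, the inclusion $\Psi_L\in\Gamma_r$, injectivity via evaluation at $\id_V$, and surjectivity by setting $g=X^{-1}$ on $\GL(V)$ followed by a density argument --- is the same as the paper's. The one genuine point of departure is how the density step in (d) is executed. The paper works on the multi-dimensional slices $E(N)=(\End_\C(V_\C)\otimes\Lambda(N))_{\bar0}$: each is a finite-dimensional affine $\C$-space, $\GL(V)\cap E(N)$ is Zariski-dense (invertibility being an open condition on the degree-zero component), and a polynomial map $E(N)\to\Lambda(N)$ vanishing on a dense set vanishes identically. You instead fix $X$ and slice along the affine line $\{X+t\,\id : t\in\C\}$, obtaining a one-variable polynomial $P(t)$ with $\Lambda$-coefficients of degree at most $r$ that vanishes for all but finitely many $t\in\C$ (namely away from the spectrum of the body $X_0$), and then conclude $P\equiv 0$ coordinatewise in a $\C$-basis of $\Lambda$. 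Both arguments are valid and rest on the same underlying fact --- $X\in\End_\Lambda(V)_{\bar0}$ is invertible if and only if its degree-zero component is --- but your translation trick replaces the appeal to Zariski density in many variables by an elementary one-variable polynomial argument, which is a modest simplification; the paper's formulation, on the other hand, aligns more directly with the ``polynomial functions on affine slices'' viewpoint it uses elsewhere (e.g.\ Remark~\ref{rem:density}).
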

\begin{proof}
We continue to identify $\cP^r[\End_\Lambda(V)_{\bar0}]\otimes_\Lambda T^r(V^*)$ with a subspace
of the $\Lambda$-module $\cF(\End_\Lambda(V)_{\bar0}\times T^r(V),\Lambda)$ of functions from
$\End_\Lambda(V)_{\bar0}\times T^r(V)$ to $\Lambda$.

First observe that $\Psi$ is $\GL(V)$-equivariant. For if $g\in\GL(V)$, then for $L\in T^r(V^*)$
and $(X,w)\in \End_\Lambda(V)_{\bar0}\times T^r(V)$, we have
$\Psi_{gL}(X,w)=L(g\inv X.w)=\Psi_L(g\inv X,w)=\hat L_g(\Psi_L)(X,w)$, whence $\Psi_{gL}=\hat L_g\Psi_L$.

We next show that for any $L\in T^r(V^*)$, $\Psi_L\in\Gamma_r$. Observe that if $g\in\GL(V)$,
then for $w=w_1\ot\dots\ot w_r\in T^r(V)$, $\hat R_g.\Psi_L(X,w)=\Psi_L(Xg,g\inv w)
=L(Xgg\inv w_1\ot\dots\ot Xgg\inv w_r)=L(Xw_1\ot\dots\ot Xw_r)=\Psi_L(X,w)$, whence
$\Psi_L\in\Gamma_r$.

Now suppose $L\in\ker\Psi$. Then $\Psi_L(X,w)=0$ for all $(X,w)\in \End_\Lambda(V)_{\bar0}\times T^r(V)$.
In particular $\Psi_L(\id_V,w)=L(w)=0$ for all $w\in T^r(V)$. It follows that $\Psi$ is injective, and hence that
it now suffices to prove that $\Psi$ is surjective.

With this in mind, take an arbitrary element
$\Phi\in \Gamma_r\subseteq\cP^r[\End_\Lambda(V)_{\bar0}]\otimes_\Lambda T^r(V^*)$.
Since $\Phi\in\Gamma_r$ is invariant under $\hat R(\GL(V))$, we have, for each element $g\in\GL(V)$,
$\hat R_g\Phi=\Phi$. But for $(X,w)\in \End_\Lambda(V)_{\bar0}]\times T^r(V)$,
$\hat R_g\Phi(X,w)=\Phi(Xg,g\inv w)$.

Suppose $X\in\GL(V)$. Then taking $g=X\inv$ in the last equation, we obtain
$$
\Phi(X,w)=\Phi(\id_V, Xw)=\Psi_L(X,w),
$$
where $L\in T^r(V^*)=T^r(V)^*$ is defined by $L(w)=\Phi(\id_V,w)$.
It follows that for this $L$, the functions $\Phi$ and $\Psi_L$ coincide on
$\GL(V)\times T^r(V)$. We show finally that this implies that $\Phi=\Psi_L$
on the whole of $\End_\La(V)_{\bar0}\times T^r(V)$, which will show that
$\Phi\in\im\Psi$, so that $\Psi$ is surjective.

Fix $w\in T^r(V)$. Then $\Phi(-,w)$ and $\Psi_L(-,w)$ are two elements of $\cP^r(\End_\La(V)_{\bar0}$
which coincide on $\GL(V)$. We shall show that this implies that they coincide on the whole of $\End_\La(V)_{\bar0}$.
Let $\La(N)\subset\La$ be the finite dimensional Grassmann algebra of dimension $2^N$.
Then $\End_\La(V)_{\bar0}\cong (\End_\C(V_\C)\otimes_\C\La)_{\bar0}$, and each element of
$\End_\La(V)_{\bar0}$ lies in the finite dimensional $\C$-subspace $ E(N):=
(\End_\C(V_\C)\otimes_\C\La(N))_{\bar0}$
of $\End_\La(V)_{\bar0}$ for some $N$. It therefore suffices to show that
$\Phi(-,w)=\Psi_L(-,w)$ on $E(N)$ for each integer $N>0$.

Now both $E(N)$ and $\Lambda(N)$ are finite dimensional vector spaces
over $\C$, and it is easily seen that the restriction to $E(N)$ of any element $\Xi$ of $\cP^r[\End_\La(V)_{\bar0}]$
is a polynomial function $E(N)\lr\La(N)$ in the usual sense of affine $\C$-spaces. Let $\Xi=\Phi(-,w)-\Psi_L(-,w)$.
Then  $\Xi=0$ on
$\GL(V)\cap E(N)$. But (cf. proof of Lemma \ref{lem:exp}(2)) an element of $E(N)$ is invertible, i.e.
lies in $\GL(V)$, precisely when its degree zero component is an invertible element of
$\GL(V_\C)$. Since this is an open condition, $\Xi=0$ on a Zariski dense subset of $E(N)$,
whence $\Xi=0$ on the whole of $E(N)$ for any integer $N>0$. It follows that $\Xi=0$, i.e. $\Phi(-,w)=\Psi_L(-,w)$
on the whole of $\End_\La(V)_{\bar0}$, and hence that $\Phi\in\im\Psi$.
\end{proof}
\subsection{Some $\GL(V)$-module isomorphisms}\label{sect:gl-iso}
We point out in this section some identifications of $\Lambda$-modules
analogous to the standard ones in linear algebra. Let $V$ be an orthosympletic superspace.

For any $A\in \End_\Lambda(V)$, $A^\dag$ will denote the unique element
of $\End_\Lambda(V)$ satisfying
\[
(A v, w)= (-1)^{[A][v]}(v, A^\dag w), \quad \forall v, w\in V.
\]
We refer to
$A^\dag$ as the adjoint of $A$ with respect to the form $(-, - )$.
It is easy to see that the map $A\mapsto A^\dag$ is
$\Lambda$-linear, i.e. that for $A\in \End_\Lambda(V)$ and $\lambda\in\Lambda$,
$(A\lambda)^\dag=A^\dag\lambda$, and preserves parity, i.e. $[A^\dag]=[A]$ for all $A\in \End_\Lambda(V)$.
Moreover, for $\la\in\La$, $(\la\id_V)^\dag=\la\id_V$, and for all $A, B\in \End_\Lambda(V)$, we have the following
three relations.
\begin{eqnarray}\label{eq:involution}
\begin{aligned}
&(A B)^\dag= (-1)^{[A][B]}B^\dag A^\dag, \quad (A^\dag)^\dag=A, \\
&\text{and }(A^\dag)^{-1}=(-1)^{[A]}(A^{-1})^\dag \quad \text{if $A$ is invertible}.
\end{aligned}
\end{eqnarray}

\begin{definition} Write $\End_\Lambda(V)^{\pm} = \{A\in \End_\Lambda(V) \mid A^\dag=\pm A\}$.
Recalling that $\bE=\End_\Lambda(V)_{\bar0}$; we also write $\bE^\pm=\End_\Lambda(V)_{\bar0}^{\pm}$.
Elements of $\End_\Lambda(V)^+$ are said to be self  adjoint.
\end{definition}
The second equation in \eqref{eq:involution}, as well as the observation that for any $X\in\End_\La(V)$, we have
$X=\frac{X+X^\dag}{2}+\frac{X-X^\dag}{2}$,  implies that
$\End_\Lambda(V)$ decomposes as a direct sum of free $\Lambda$-modules as follows.
\begin{eqnarray}\label{eq:decomp-end}
\End_\Lambda(V)=\End_\Lambda(V)^+\oplus\End_\Lambda(V)^-.
\end{eqnarray}

By taking the intersection with $\bE=\End_\La(V)_{\bar0}$ of the above decomposition,
we also obtain
\be\label{eq:decomp}
\bE=\bE^+\oplus \bE^-,
\ee
where $\bE^\pm=\End_\La(V)^\pm\cap \bE$.

\begin{remark}
 Since $\OSp(V)\subset \bE=\End(V)_{\bar0}$, $\OSp(V)=\{g\in \GL(V)\mid g^\dag g=\id\}$.

\end{remark}

For any $A\in\GL(V)$, we define $\hat A := (A^\dag)^{-1}$.  Then
$\widehat{AB}= \hat A \hat B$ for all $A, B\in\GL(V)$, and $g\mapsto \hat g$ defines an involutory automorphism
of $\GL(V)$.  This enables us to
define a twisted $\GL(V)$-action on $V$ as follows.
\begin{eqnarray}\label{eq:twist}
\GL(V)\times V \longrightarrow V, \quad (g, v) \mapsto \hat{g}v.
\end{eqnarray}
\begin{lemma}\label{lem:isodual}
Let $\phi: V\stackrel{\sim}{\longrightarrow} V^*$, $v\mapsto \phi_v$,  be the
map defined by $\phi_v(w) =(v, w)$ for all $w\in V$.
Then $\phi$ is a $\La$-module isomorphism from $V$ to $V^*$, which respects the $\GL(V)$-action,
where $\GL(V)$ acts on $V$ with the twisted action \eqref{eq:twist} and on
$V^*$ with the standard action. That is, for any $g\in\GL(V)$ and $v\in V$, $ \phi_{\hat{g} v}=g.\phi_v $.
\end{lemma}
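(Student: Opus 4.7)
The plan is to establish three things in order: (i) $\Lambda$-linearity of $\phi$ as a map of right modules; (ii) bijectivity of $\phi$; and (iii) the intertwining property $\phi_{\hat g v} = g.\phi_v$.

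For (i), I would compute $\phi_{v\lambda}(w)$ and $(\phi_v\lambda)(w)$ separately and show they agree. Moving $\lambda$ past $v$ via super-commutativity and applying \eqref{eq:bil} yields $\phi_{v\lambda}(w) = (-1)^{[v][\lambda]}\lambda(v,w)$, while \eqref{eq:lambda-module} gives $(\phi_v\lambda)(w) = (-1)^{[\lambda][w]}(v,w)\lambda$. The crucial observation is that $[(v,w)] = [v]+[w]$ since the form is even, so one further application of super-commutativity in $\Lambda$ equates the two expressions.

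For (ii), I would fix the orthosymplectic basis $\cE = (e_1,\dots,e_{m+2n})$ of $V$ with $(\cE,\cE) = \eta$ and let $(e_a^*)$ be its dual basis in $V^*$. Expanding $\phi_{e_a} = \sum_b e_b^* \lambda_{ba}$ and evaluating at $e_c$ via \eqref{eq:lambda-module} gives $\lambda_{ba} = \eta_{ab}$ (the signs disappear because $\phi_{e_a}$ and $e_a^*$ carry the same parity $[a]$). Since $\eta$ is invertible, $\phi$ sends a $\Lambda$-basis to a $\Lambda$-basis and is therefore an isomorphism.

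For (iii), I would evaluate both sides on an arbitrary $w\in V$. The right side is immediate: $(g.\phi_v)(w) = \phi_v(g^{-1}w) = (v, g^{-1}w)$. For the left side, $\phi_{\hat g v}(w) = (\hat g v, w)$; since $\hat g$ is even (as $g$ is), the defining property of the adjoint gives $(\hat g v, w) = (v, (\hat g)^\dagger w)$ with no intervening sign. The identities in \eqref{eq:involution}, applied with $A = g$ even, yield $(g^\dagger)^{-1} = (g^{-1})^\dagger$, so $(\hat g)^\dagger = ((g^{-1})^\dagger)^\dagger = g^{-1}$, whence the left side equals $(v, g^{-1}w)$ as well.

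The only step requiring genuine care is the sign-tracking in (i); (ii) is elementary and (iii) is a formal manipulation given \eqref{eq:involution}. The appearance of the twisted action $g\mapsto\hat g$ on $V$ is forced precisely by the two occurrences of $\dagger$ that arise in (iii) when passing from the adjoint to its inverse, exactly mirroring the classical non-super setting where a non-degenerate form identifies $V$ with $V^*$ equivariantly only once the correct contragredient action is used.
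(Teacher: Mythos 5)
Your proof is correct and follows essentially the same route as the paper's. The only notable difference is in step (i): you verify $\Lambda$-linearity by comparing $\phi_{v\lambda}$ with $\phi_v\lambda$ (right-module action), which requires chasing three sign factors, whereas the paper checks $\phi_{\lambda v}=\lambda\phi_v$ (left-module action), which is immediate from \eqref{eq:bil} with no signs at all — the two are equivalent by the bimodule conventions established after Lemma \ref{lem:superlin}, but the left-module check is cleaner.
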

\begin{proof} Clearly $\phi_v\in V^*$ for any $v\in V$. Moreover for $\la\in\La$,
 $\phi_{\la v}(x) =(\la v,x)=\la(v,x)=\la\phi_v(x)$, so that the map $v\mapsto \phi_v$ is a $\La$-module
homomorphism by \eqref{eq:lambda-module}, and is an isomorphism by the non-degeneracy of $(-,-)$
The standard $\GL(V)$-action on $V^*$ is defined by
$(g\bar{v})(w)=\bar{v}(g^{-1}w)$ for any $\bar{v}\in V^*$, $g\in\GL(V)$
and $w\in V$. Thus $g\phi_v(w)=\phi_v(g\inv w)=(v,g\inv w)=((g\inv)^\dag v,w)=(\hat g v,w)$
for all $g\in G$ and $v,w\in V$. Hence
$g.\phi_v = \phi_{\hat{g} v}$ for all $v\in V$.
\end{proof}

Next observe that there is a $\GL(V)$-action on $\End_\Lambda(V)$ defined by
\begin{eqnarray}\label{eq:gl-action}
\Upsilon: \GL(V) \times \End_\Lambda(V) \longrightarrow
\End_\Lambda(V), \quad (g, A)\mapsto \Upsilon_g(A)=\hat{g}A g^{-1}.
\end{eqnarray}
Clearly both $\bE=\End_\Lambda(V)_{\bar0}$ and $\End_\Lambda(V)_{\bar1}$ are $\GL(V)$-stable
$\Lambda_{\bar 0}$-submodules of $\End_\Lambda(V)$ under this action.
Further, since $ \Upsilon_g(A)^\dag= \Upsilon_g(A^\dag)$ for all $A\in \bE$, both $E^+$
and $E^-$ are stable under the action of $\GL(V)$, as are $\bE^{\pm}$.

Consider the even $\Lambda$-linear map
\begin{eqnarray}\label{eq:zeta}
\zeta^*: V^*\otimes_\Lambda V^* \longrightarrow \End_\Lambda(V)
\end{eqnarray}
defined by $\phi_v\otimes \phi_w\mapsto  \phi_{v,w}$, where $\phi_{v,w}(x)=v(w, x)$
for all $v,w,x\in V$. This is clearly a $\Lambda$-module isomorphism. However more is true.
%

\begin{lemma}
The map $\zeta^*$ is a $\GL(V)$-module isomorphism, with $\GL(V)$ acting on
$\End_\Lambda(V)$ as in \eqref{eq:gl-action}. Furthermore, $\zeta^*$
restricts to $\GL(V)$-module isomorphisms
$(V^*\otimes_\Lambda V^*)_{\bar\alpha} \stackrel{\sim}{\longrightarrow}
\End_\Lambda(V)_{\bar\alpha}$ for ${\bar\alpha}=\bar0, \bar1$.
\end{lemma}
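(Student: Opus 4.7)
The plan is to exhibit $\zeta^*$ as the composition of two canonical $\GL(V)$-equivariant $\Lambda$-module isomorphisms, then read off parity preservation.

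First, by Lemma \ref{lem:isodual}, the map $\phi: V \stackrel{\sim}{\longrightarrow} V^*$ is a $\Lambda$-module isomorphism intertwining the twisted action \eqref{eq:twist} on $V$ with the standard action on $V^*$. Applying $\phi^{-1}$ to the first tensor factor yields a $\GL(V)$-equivariant $\Lambda$-module isomorphism
\[
\phi^{-1} \otimes \id: V^* \otimes_\Lambda V^* \stackrel{\sim}{\longrightarrow} V \otimes_\Lambda V^*,
\]
where the $\GL(V)$-action on the right uses the twisted action on the first factor and the standard action on the second. Next, I would consider the evaluation map
\[
\xi: V \otimes_\Lambda V^* \longrightarrow \End_\Lambda(V), \quad v \otimes \bar{w} \mapsto \bigl(x \mapsto v\cdot \bar{w}(x)\bigr).
\]
Using an orthosymplectic basis $e_1,\dots,e_{m+2n}$ of $V$ with dual basis in $V^*$, the $\xi$-images of the basis of $V \otimes_\Lambda V^*$ are the matrix units in $\End_\Lambda(V)$; hence $\xi$ is a $\Lambda$-module isomorphism.

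Next I would verify $\GL(V)$-equivariance of $\xi$ with respect to the action $\Upsilon$ of \eqref{eq:gl-action}. For $g \in \GL(V)$ (which is even), the induced action on $V \otimes_\Lambda V^*$ sends $v \otimes \bar{w}$ to $\hat{g} v \otimes g\bar{w}$, and then
\[
\xi(\hat{g} v \otimes g\bar{w})(x) = \hat{g} v \cdot \bar{w}(g^{-1} x) = \hat{g}\,\xi(v \otimes \bar{w})(g^{-1} x) = \Upsilon_g\bigl(\xi(v\otimes\bar{w})\bigr)(x),
\]
so $\xi$ is $\GL(V)$-equivariant. Tracing the composition $\xi \circ (\phi^{-1} \otimes \id)$ on a pure tensor gives $\phi_v \otimes \phi_w \mapsto v \otimes \phi_w \mapsto \bigl(x \mapsto v \cdot (w,x)\bigr) = \phi_{v,w}$, which coincides with $\zeta^*$. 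Both constituent maps are degree-zero (since the form is even and $\xi$ manifestly preserves parity), so $\zeta^*$ restricts to isomorphisms $(V^* \otimes_\Lambda V^*)_{\bar\alpha} \stackrel{\sim}{\longrightarrow} \End_\Lambda(V)_{\bar\alpha}$ for $\bar\alpha = \bar{0}, \bar{1}$.

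The only genuinely delicate point will be keeping track of signs. The key identity needed for equivariance is $\phi_w(g^{-1} x) = (w, g^{-1} x) = (\hat{g} w, x)$ for $g$ even, which follows from the supersymmetry of $(-,-)$ together with the relation $(g^{-1})^\dag = \hat{g}$ extracted from \eqref{eq:involution}. Beyond that, the proof is essentially a bookkeeping exercise; no new geometric input is required, since both factorising isomorphisms are standard super linear algebra once Lemma \ref{lem:isodual} is in hand.
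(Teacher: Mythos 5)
Your proof is correct and takes essentially the same approach as the paper: both reduce to Lemma \ref{lem:isodual} together with the identity $\hat g^\dag = g^{-1}$ for even $g$, and both handle the parity statement by noting $\zeta^*$ is even. The only difference is that you explicitly factor $\zeta^*$ through $V\otimes_\Lambda V^*$ via $\phi^{-1}\otimes\id$ and the evaluation map, verifying equivariance of each factor separately, whereas the paper compresses the same computation into a single chain of equalities $\zeta^*(g.X)(x)=\cdots=\Upsilon_g(\zeta^*(X))(x)$.
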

\begin{proof} The second statement follows from the first and the fact that $\zeta^*$ is even.
Since $\zeta^*$ is evidently a $\Lambda$-module isomorphism, we only need to prove its $\GL(V)$-equivariance.
Given the element $X=\phi_v\otimes \phi_w$ of $V^*\otimes_\Lambda V^*$, we have
 $\zeta^*(X)=\phi_{v,w}$, and for $x\in V$,
\[
\begin{aligned}
\zeta^*(g.X)(x)
&=\zeta^*(g.\phi_v\otimes g.\phi_w)=\zeta^*(\phi_{\hat{g}v}\otimes \phi_{\hat{g}w})(x)\\
&=\phi_{\phi_{\hat{g}v},\phi_{\hat{g}w}}(x)=\hat{g}v({\hat{g}w},x)\\
&=\hat{g}v({w},\hat{g}^\dag x)=\hat{g}v({w},g\inv x)\\
&=\hat{g}\phi_{v,w}g\inv(x)=\Upsilon_g(\zeta^*(X))(x).
\end{aligned}\]
\end{proof}

Now every element of $V^*\otimes_\Lambda V^*$ (resp. $(V^*\otimes_\Lambda V^*)_{\bar0}$) can be expressed as
a $\Lambda$ (resp. $\Lambda_{\bar0}$)-linear combination
of elements of the form $\phi_v\otimes\phi_w$, for homogeneous elements $v,w\in V$,
where  in the case of $(V^*\otimes_\Lambda V^*)_{\bar0}$, $v, w$ have the same parity.
Recall that we have the $\Lambda_{}$-involution on $V^*\otimes_\Lambda V^*$
defined by $\tau(\phi_v\otimes\phi_w)=-1^{[v][w]}\phi_w\otimes\phi_v$,
which restricts to a $\Lambda_{\bar 0}$-involution on $(V^*\otimes_\Lambda V^*)_{\bar0}$.
 Let
\[
\begin{aligned}\label{eq:tensordec}
S^2(V^*)&=\{w \in (V^*\otimes_\Lambda V^*)\mid \tau(w)=w\},\\
\wedge^2(V^*)&=\{w \in (V^*\otimes_\Lambda V^*)\mid \tau(w)=-w\}.
\end{aligned}
\]
Then $V^*\ot V^*=S^2(V^*)\oplus \wedge^2(V^*)$, and $(V^*\ot V^*)_{\bar0}
=S^2(V^*)_{\bar0}\oplus \wedge^2(V^*)_{\bar0}$.

Note that $S^2(V)$ is isomorphic to the degree $2$ part of the symmetric algebra
defined in Definition \ref{def:symalg} above,
by virtue of the decomposition \eqref{eq:tensordec}. Thus this notation is consistent with the notation above.


\begin{lemma}\label{lem:iso}
\begin{enumerate}
\item We have, for all homogeneous $v,w\in V$,
$$
\phi_{v,w}^\dag=(-1)^{[v][w]}\phi_{w,v}.
$$
\item For all $X\in V^*\ot V^*$, we have
$$
\zeta^*(\tau(X))=\zeta^*(X)^\dag.
$$
That is, $\zeta^*\circ\tau=^\dag\circ\zeta^*$.
\item We have
$\zeta^*(S^2(V^*)_{\bar0})=\End_\Lambda(V)_{\bar0}^+$ and $
\zeta^*(\wedge^2(V^*)_{\bar0})=\End_\Lambda(V)_{\bar0}^-.$
\end{enumerate}
\end{lemma}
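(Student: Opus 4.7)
The three parts of the lemma form a natural chain: part (1) is a direct sign computation, part (2) extends (1) to all of $V^* \otimes_\Lambda V^*$ by $\Lambda$-linearity, and part (3) is a formal consequence of (2) together with the isomorphism statement of the preceding lemma. My plan is to establish them in this order.

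For part (1), I would start from the defining identity of the adjoint, namely $(\phi_{v,w}(x), y) = (-1)^{([v]+[w])[x]}(x, \phi_{v,w}^\dag(y))$, and simplify the left hand side $(v(w,x), y)$ by pulling the scalar $(w,x) \in \Lambda$ out of the first argument. Using the sign rule $v\lambda = (-1)^{[v][\lambda]}\lambda v$ together with the $\Lambda$-bilinearity \eqref{eq:bil} one obtains $(-1)^{[v]([w]+[x])}(w,x)(v,y)$, and applying supersymmetry $(w,x) = (-1)^{[w][x]}(x,w)$ rewrites this as $(-1)^{[v][w]+[v][x]+[w][x]}(x,w)(v,y)$. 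On the other hand, $\Lambda$-bilinearity gives $(x, \phi_{w,v}(y)) = (x,w)(v,y)$ immediately. Comparing the two expressions in the adjoint identity isolates the overall scalar $(-1)^{[v][w]}$, proving $\phi_{v,w}^\dag = (-1)^{[v][w]}\phi_{w,v}$.

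For part (2), both $\zeta^*$ and $\dag$ are $\Lambda$-linear, and $\tau$ is a $\Lambda$-linear involution, so the map $\zeta^* \circ \tau - {}^\dag \circ \zeta^*$ is a $\Lambda$-linear map $V^* \otimes_\Lambda V^* \to \End_\Lambda(V)$; it therefore suffices to verify its vanishing on simple tensors $\phi_v \otimes \phi_w$, where it reduces directly to part (1). Given part (2), part (3) is essentially formal: the preceding lemma provides a $\Lambda$-module isomorphism $(V^* \otimes_\Lambda V^*)_{\bar 0} \stackrel{\sim}{\longrightarrow} \End_\Lambda(V)_{\bar 0}$, and part (2) shows that $\zeta^*$ intertwines $\tau$ with $\dag$; consequently $\zeta^*$ identifies the $\pm 1$-eigenspace decompositions on both sides. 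By definition these decompositions are $S^2(V^*)_{\bar 0} \oplus \wedge^2(V^*)_{\bar 0}$ on the source and $\bE^+ \oplus \bE^-$ on the target (by \eqref{eq:decomp}), yielding both equalities in part (3).

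The only real obstacle is the sign bookkeeping in part (1): three independent sign contributions (from moving a $\Lambda$-scalar past a vector, from supersymmetry of the form, and from the adjoint convention) must combine correctly to produce the single surviving factor $(-1)^{[v][w]}$. Once part (1) is in hand, parts (2) and (3) are routine.
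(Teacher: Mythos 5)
Your proof is correct and follows essentially the same route as the paper: part (1) by the identical sign computation starting from the adjoint identity and using $\Lambda$-bilinearity and supersymmetry, part (2) by $\Lambda$-linearity reduction to the simple tensors $\phi_v\otimes\phi_w$, and part (3) by matching the $\pm 1$-eigenspace decompositions of $\tau$ and $\dag$ under the isomorphism $\zeta^*$. The sign bookkeeping in part (1) is correct and reproduces the paper's chain of equalities exactly.
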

\begin{proof}
For all $x, y\in  V$, we have
\[
\begin{aligned}
(\phi_{v,w}(x), y) &= (-1)^{[v]([w]+[x])}(w, x)(v, y) \\
&= (-1)^{[v][w]+[v][x]+[w][x]}(x, w)(v, y)\\
&=(-1)^{([v]+[w])[x]}(x,(\phi_{v,w})^\dag(y)).\\
\end{aligned}
\]
Hence $(x,(\phi_{v,w})^\dag(y))=(-1)^{[v][w]}(x, w)(v, y)$. But
$(x,\phi_{w,v}(y))=(x,w(v,y))=(x,w)(v,y)$, and (1) follows.

Part (2) for $X=\phi_v\ot \phi_w$ is an easy consequence of (1), given that
$\phi_{v,w}=\zeta^*(\phi_v\ot \phi_w)$, etc. The general form
of (2) follows since the statement is clearly $\Lambda$-linear in $X$, and the elements $\phi_v\ot \phi_w$
with $v,w$ homogeneous elements of $V$, span $V^*\ot V^*$ over $\Lambda$.

Part (3) is an easy consequence of part (2) and the decomposition \eqref{eq:decomp}
of $\End_\Lambda(V)_{\bar0}$.
\end{proof}

Let $\Gamma_r^{\OSp(V)}$ be the subspace of the $\GL(V)$-module $\Gamma_r$ (see \eqref{eq:Gamma})
consisting of $\OSp(V)$-invariants.  Then
\[
\Gamma_r^{\OSp(V)}:=\Big(\cP^r[\End_\Lambda(V)_{\bar0}]^{\hat{L}(\OSp(V))}\otimes_\Lambda
T^r(V^*)\Big)^{\hat{R}(\GL(V))}.
\]
Similarly denote by $T^r(V^*)^{\OSp(V)}$ the subspace of $T^r(V^*)$ consisting of
$\OSp(V)$-invariants. The following result is an immediate consequence of Proposition \ref{prop:Borel-Weil}.
\begin{proposition}\label{prop:BW-OSp}
The map $L\mapsto \Psi_L$ of Pproposition \ref{prop:Borel-Weil} defines a $\Lambda$-module isomorphism
\[
T^r(V^*)^{\OSp(V)}\stackrel{\sim}\longrightarrow\Gamma_r^{\OSp(V)}.
\]
\end{proposition}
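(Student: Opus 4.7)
The plan is to deduce the proposition as an immediate consequence of Proposition \ref{prop:Borel-Weil} by restricting the $\GL(V)$-module isomorphism $\Psi$ to $\OSp(V)$-invariants.

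First I would recall that Proposition \ref{prop:Borel-Weil} provides $\Psi: T^r(V^*) \stackrel{\sim}{\longrightarrow} \Gamma_r$ as an isomorphism of $\GL(V)$-modules, where the $\GL(V)$-action on $T^r(V^*)$ is $\rho_r^*$ and the action on $\Gamma_r$ is the restriction of $\hat L$. Since any equivariant isomorphism of $\GL(V)$-modules restricts to a $\Lambda$-module bijection on $H$-invariants for every subgroup $H \subseteq \GL(V)$, taking $H = \OSp(V)$ yields at once a $\Lambda$-module isomorphism
\[
T^r(V^*)^{\OSp(V)} \stackrel{\sim}{\longrightarrow} \Gamma_r^{\hat L(\OSp(V))}.
\]

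Next I would identify $\Gamma_r^{\hat L(\OSp(V))}$ with the space $\Gamma_r^{\OSp(V)}$ in the statement. Two elementary observations suffice. The commutativity of $\hat L$ and $\hat R$, noted just before \eqref{eq:Gamma}, permits the interchange of the order in which $\hat L(\OSp(V))$- and $\hat R(\GL(V))$-invariants are taken. Moreover, since $\hat L_g = L_g \otimes \id$ acts trivially on the second tensor factor, and since $T^r(V^*) \cong T^r(V_\C^*) \otimes_\C \Lambda$ is a free $\Lambda$-module, taking $\hat L(\OSp(V))$-invariants on the whole tensor product coincides with taking $L(\OSp(V))$-invariants in $\cP^r[\End_\Lambda(V)_{\bar0}]$ and then tensoring with $T^r(V^*)$. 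Combining these,
\[
\Gamma_r^{\hat L(\OSp(V))} = \bigl( \cP^r[\End_\Lambda(V)_{\bar0}]^{\hat L(\OSp(V))} \otimes_\Lambda T^r(V^*) \bigr)^{\hat R(\GL(V))} = \Gamma_r^{\OSp(V)}.
\]

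I do not foresee any real obstacle: the substantive content has already been established in Proposition \ref{prop:Borel-Weil}, and what remains is formal bookkeeping concerning commuting group actions and invariants in a tensor product over $\Lambda$. The mildest subtlety is the use of freeness of $T^r(V^*)$ over $\Lambda$ in commuting invariants past the tensor factor, but this is immediate from the isomorphism $T^r(V^*) \cong T^r(V_\C^*) \otimes_\C \Lambda$.
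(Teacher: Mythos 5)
Your argument is precisely the one the paper has in mind: the paper simply asserts the proposition is an immediate consequence of Proposition~\ref{prop:Borel-Weil}, and your restriction of the $\GL(V)$-equivariant isomorphism $\Psi$ to $\OSp(V)$-invariants, together with the bookkeeping that identifies $\Gamma_r^{\hat L(\OSp(V))}$ with $\Gamma_r^{\OSp(V)}$ using commutativity of $\hat L$ and $\hat R$ and flatness of $T^r(V^*)$ over $\Lambda$, fills in exactly what is left implicit. Correct and same approach.
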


Note that $-\id$ belongs to $\OSp(V)$, and $(-\id)(w)=(-1)^r w$ for all $w\in T^r(V^*)$.
Hence $T^r(V^*)^{\OSp(V)}=0$ if $r$ is odd. Thus $\Gamma_r^{\OSp(V)}=0$ by the proposition.

\section{Invariant theory for the orthosymplectic supergroup}\label{sect:inv-osp}

We retain the notation of Sections \ref{sect:gl-iso} and \ref{sect:poly}.
\subsection{Statement of the key lemma}

The orthosymplectic supergroup $\OSp(V)$ acts on the superalgebra of polynomial functions
$\cP[\End_\Lambda(V)_{\bar0}]$ in the
following way. For any $f\in \cP[\End_\Lambda(V)_{\bar0}]$ and $g\in \OSp(V)$,
$(g\cdot f)(A)=f(g^{-1} A)$ for all $A\in\End_\Lambda(V)_{\bar0}$,
where $g^{-1} A$ is the composition of $g^{-1}$ and $A$.
This action respects the associative
superalgebra structure of $\cP[\End_\Lambda(V)_{\bar0}]$ (cf. Proposition \ref{prop:poly-polyfn}).

\begin{lemma}\label{lem:omega}
Let $\omega:\bE\to \bE$, where $\bE=\End_\Lambda(V)_{\bar0}$ be defined by
$\omega(X)=X^\dag X$. Then $\omega(\bE)\subseteq \bE^+$, and if $\alpha\in\cP^r[\bE^+]$
then $\alpha\circ\omega\in\cP^{2r}[\bE]$.
\end{lemma}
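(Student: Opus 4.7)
Plan. The first assertion that $\omega(\bE)\subseteq\bE^+$ is immediate: for $X\in\bE$, both $X$ and $X^\dag$ are even, so \eqref{eq:involution} gives $(X^\dag X)^\dag = X^\dag (X^\dag)^\dag = X^\dag X$, and $X^\dag X$ is clearly even, hence lies in $\bE^+$.

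For the polynomial assertion, I would argue in two reductions. First, by Definition \ref{def:polyfn} we may write $\alpha(Y)=\gamma(\Phi\otimes Y^{\otimes r})$ for some $\Phi\in T^r(M^*)$, where $M=\End_\Lambda(V)^+$. Since $\cP^r[\bE^+]$ is closed under $\Lambda$-linear combinations of such expressions, it suffices to treat the case where $\Phi=\phi_1\otimes\cdots\otimes\phi_r$ is a decomposable tensor with $\phi_i\in M^*$. Because every $Y\in\bE^+$ is even, the sign $J(v,w)$ in \eqref{eq:contr-tensor} vanishes, so $\alpha(Y)=\phi_1(Y)\phi_2(Y)\cdots\phi_r(Y)$, whence $(\alpha\circ\omega)(X)=\prod_{i=1}^r\phi_i(X^\dag X)$. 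By Proposition \ref{prop:poly-polyfn}, $\cP[\bE]$ is a graded associative $\Lambda$-superalgebra, so a product of $r$ elements of $\cP^2[\bE]$ lies in $\cP^{2r}[\bE]$; hence the proof reduces to showing that for every $\phi\in M^*$, the function $f_\phi:X\mapsto\phi(X^\dag X)$ lies in $\cP^2[\bE]$.

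For this linear case, first extend $\phi$ to a $\Lambda$-linear form $\tilde\phi\in N^*$, where $N=\End_\Lambda(V)$, by declaring $\tilde\phi|_{\End_\Lambda(V)^-}=0$; this is permitted by the decomposition \eqref{eq:decomp-end}. Fix a homogeneous orthosymplectic basis $(e_a)$ of $V_\C$ and, for $X\in\bE$, let $(x_{ab})$ denote its matrix, so that each $x_{ab}\colon\bE\to\Lambda$ is a $\Lambda$-linear form and thus lies in $\cP^1[\bE]$. From the defining relation $(Xv,w)=(-1)^{[X][v]}(v,X^\dag w)$ applied with $v=e_b$, $w=e_c$, the entries of $X^\dag$ are $\Lambda$-linear combinations of the $x_{ab}$'s (the coefficients depending only on $\eta$), so each entry $(X^\dag X)_{ab}=\sum_c (X^\dag)_{ac}\, x_{cb}$ is a quadratic expression in the $x_{ij}$'s, visibly in $\cP^2[\bE]$. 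Since $\tilde\phi(X^\dag X)$ is a $\Lambda$-linear combination of these entries, it too lies in $\cP^2[\bE]$, which completes the argument.

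The proof is elementary throughout; the only delicacy is the Koszul-sign bookkeeping when deriving the matrix formula for $X^\dag$, but since only the $\Lambda$-linear dependence of $X^\dag$ on $X$ is needed, the precise signs do not affect the conclusion.
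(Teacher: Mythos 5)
Your proof is correct, and it takes essentially the same route as the paper: both arguments come down to the observation that each matrix entry of $X^\dag X$ is a quadratic expression in the coordinate functions on $\bE$, combined with the fact (Proposition~\ref{prop:poly-polyfn}) that $\cP[\bE]$ is a graded algebra, so that substituting degree-$2$ elements into a degree-$r$ polynomial yields an element of $\cP^{2r}[\bE]$. The only cosmetic difference is in organization: you reduce explicitly to decomposable tensors and then to the linear case $\phi\in M^*$ before appealing to closure under products, whereas the paper works directly with the coordinates $\mu_{jq}$ of $\omega(X)$ in the basis $\phi_{jq}$ of $\End_\Lambda(V)$ and substitutes these into $\alpha$; the underlying content is identical.
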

\begin{proof}
Since $(X^\dag X)^\dag=X^\dag X$, the first statement is evident.

To prove the second, let $b_1,\dots,b_{k+l}$ be a homogeneous basis of $V$, and
in the notation of Lemma \ref{lem:isodual} and \eqref{eq:zeta},
let $\phi_{ij}=\phi_{b_i,b_j}=\zeta^*(\phi_{b_i}\ot\phi_{b_j})$ for $1\leq i,j\leq k+l$.
Then $\{\phi_{ij}\mid 1\leq i,j\leq k+l\}$ is a homogeneous basis of $\End_\La(V)$;
writing $[b_i]=[i]$, we have $[\phi_{ij}]=[i]+[j]$.

Each element $X\in \End_\La(V)$ has a unique expression $X=\sum_{i,j}\phi_{ij}\la_{ij}$,
with $\la_{ij}\in\La$, and $X\in \bE=\End_\La(V)_{\bar0}$ if and only if $[\phi_{ij}]=[\la_{ij}]=[i]+[j]$
for all $i,j$. Now $X^\dag X$ may easily be computed using the facts that $\phi_{ij}^\dag=(-1)^{[i][j]}\phi_{ji}$,
and that $\phi_{ji}\phi_{pq}=(b_i,b_p)\phi_{jq}$.

The result is that $\omega(X)=X^\dag X=\sum_{j,q}\phi_{jq}\mu_{jq}$, where
\be\label{eq:mu}
\mu_{jq}=\sum_{i,p=1}^{k+l}(-1)^{[i]+[j][q]+[j][p]+[p][q]}\la_{ij}(b_i,b_p)\la_{pq}.
\ee

Now any element $\alpha\in\cP^r(\bE^+)$ is a polynomial of degree $r$ in the $\mu_{jq}$ (see \eqref{eq:frv}).
By \eqref{eq:mu}, the $\mu_{jq}$ are polynomials of degree $2$ in the $\la_{ij}$, whence $\alpha\circ\omega(X)$
is a polynomial of degree $2r$ in the $\la_{ij}$, whence the second statement.
\end{proof}
\begin{lemma}[Key Lemma]\label{lem:key}
Let $G=\OSp(V)$ and $\cP=\cP[\End_\Lambda(V)_{\bar0}]$.
Denote by $\cP^G$ the subalgebra
of $\cP$ consisting of $G$-invariant functions, i.e. functions
$f$ such that for $X\in \bE$, $f(gX)=f(X)$ for all $g\in G$.
Then given any $f\in\cP^G$, there is a polynomial function
$\Phi: \End_\Lambda(V)_{\bar0}^+\longrightarrow\Lambda$
such that $f(A)=\Phi(A^\dag A)$.
\end{lemma}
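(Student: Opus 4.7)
The approach generalises the Atiyah--Bott--Patodi geometric method. Observe first that the map $\omega\colon \bE\to \bE^+$, $A\mapsto A^\dag A$ of Lemma \ref{lem:omega}, is manifestly $\OSp(V)$-invariant, since $(gA)^\dag(gA)=A^\dag g^\dag g A=A^\dag A$ for all $g\in\OSp(V)$. Consequently, pullback along $\omega$ sends polynomial functions on $\bE^+$ into $\cP^G$, and the content of the Key Lemma is the reverse inclusion. The broad strategy is: (a) analyse the fibres of $\omega$ over invertible elements; (b) use super Gram--Schmidt (Lemma \ref{lem:bs-constr}) to construct a canonical section of $\omega$ on the invertible locus; (c) use this section to define a candidate $\Phi$ and verify polynomiality; (d) extend from the invertible locus to all of $\bE$ by density.

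The first step is to pin down the fibres of $\omega$ over invertible matrices. For $A\in \bE\cap\GL(V)$ and any $B\in\bE$ with $\omega(B)=\omega(A)$, set $g:=BA^{-1}$; since $A$ and $B$ are even, so is $g$, and $g^\dag g=(A^{-1})^\dag A^\dag A A^{-1}=\id_V$, whence $g\in\OSp(V)$ and $B\in\OSp(V)\cdot A$. Hence for $f\in\cP^G$, the formula $\Phi_0(\omega(A)):=f(A)$ gives a well-defined function on the image $\omega(\bE\cap\GL(V))$, tautologically satisfying $f(A)=\Phi_0(\omega(A))$ on $\bE\cap\GL(V)$.

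To promote $\Phi_0$ to a polynomial function on all of $\bE^+$, I would construct an explicit section of $\omega$ on $\bE\cap\GL(V)$ by running super Gram--Schmidt (Lemma \ref{lem:bs-constr}) on the columns of $A$: this produces a factorisation $A=g(A)\,T(A)$ with $g(A)\in\OSp(V)$ and $T(A)$ in a canonical upper-triangular form, whose entries depend algebraically on the entries of $\omega(A)=T(A)^\dag T(A)$. Combined with $\OSp(V)$-invariance of $f$, this gives $\Phi_0(\omega(A))=f(T(A))$. One then argues that, although the individual entries of $T(A)$ involve square-root-type operations in the entries of $\omega(A)$, the polynomial nature and $\OSp(V)$-invariance of $f$ forces such ambiguities to cancel, yielding a genuine polynomial $\Phi\in\cP[\bE^+]$. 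Finally, $\bE\cap\GL(V)$ is Zariski dense in $\bE$ (cf.\ Remark \ref{rem:density}) because invertibility is determined by an open condition on the degree-zero body, so the equality $f=\Phi\circ\omega$ propagates from the dense subset to all of $\bE$.

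The main obstacle is precisely the polynomial extension step: the construction above shows that $\Phi_0$ is a priori only an algebraic function of the entries of $\omega(A)$, and promoting it to a polynomial is the delicate point. In the classical orthogonal setting this is handled by a mixture of density, smoothness of the quotient map, and the fact that $\Or(n)$-orbits in matrices are separated by $A^T A$; in the super setting one must additionally control the Grassmann-valued corrections layer by layer in the Grassmann degree. The use of the infinite-degree Grassmann algebra $\Lambda$, emphasised at the end of the introduction, is essential here, and the acknowledgement to Deligne regarding a gap in an earlier version of this very lemma confirms that this is the subtle point of the argument.
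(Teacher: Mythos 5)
Your proposal correctly identifies the geometric skeleton --- fibres of $\omega$ over invertible elements are single $\OSp(V)$-orbits, so $f$ descends to a well-defined $\Phi_0$ there, and density of the invertible locus would then extend any polynomial identity --- and you rightly flag the promotion of $\Phi_0$ to a genuine polynomial as the delicate point. But the proposal leaves exactly that step unargued: the phrase ``one then argues that\dots ambiguities cancel'' is a placeholder, not a proof. The Gram--Schmidt factorisation $A=g(A)T(A)$ you invoke produces entries of $T(A)$ which a priori involve inverses and square roots of entries of $\omega(A)$, and nothing in your sketch removes those denominators. That cancellation is the whole content of the lemma, so as written the argument does not close.

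The paper supplies it by induction on $\sdim V_\C$ together with a \emph{two-chart divisibility} argument. Using Lemmas \ref{lem:bs-constr} and \ref{lem:bs-change}, one normalises a single even column $v_1$ (or a single odd pair $v_{m+1},v_{m+2}$), peels it off, and applies the induction hypothesis to the residual $\OSp(V')$-invariant on the lower-dimensional space $V'$. This produces, on the open set where $(v_1,v_1)$ is a unit, a Laurent expression $f=u_{11}^{-r_1}F_1\circ\omega_s$ with $F_1$ polynomial, and on the open set where $(v_{m+1},v_{m+2})$ is a unit, a second expression $f=u_{m+1,m+2}^{-r}F\circ\omega_s$. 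Equating on the dense overlap (Lemma \ref{lem:uwdense}), and invoking density of $\omega_s(\bE)$ in $\bS$ (Lemma \ref{lem:omegadense}, whose proof needs the formal square root $(I+\Psi)^{1/2}$), one obtains the polynomial identity $u_{11}^{r_1}F=u_{m+1,m+2}^{r}F_1$ on all of $\bS$. Since $u_{11}$ and $u_{m+1,m+2}$ are independent \emph{even} coordinates on $\bS$, the factor $u_{m+1,m+2}^{r}$ must divide $F$, so $u_{m+1,m+2}^{-r}F$ is a bona fide polynomial $\Phi$ with $f=\Phi\circ\omega$ everywhere. Your sketch contains neither this double-chart comparison nor the base cases $\sdim V_\C=(1|0)$ and $(0|2)$ that start the induction, and these are precisely the ingredients that eliminate the denominators you worried about.
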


We postpone the proof to Section \ref{sect:pf-new}. Here we apply it to
prove the first fundamental theorem of invariant theory for
the orthosymplectic supergroup.

\subsection{First fundamental theorem of invariant theory}

For $r=2d$, define the $\Lambda$-linear map
\begin{eqnarray}\label{eq:contraction}
\delta: V^{\otimes_\Lambda 2d}\longrightarrow \Lambda, \quad
v_1\otimes v_2\otimes\dots\otimes v_{2d} \mapsto \stackrel{\rightarrow}{\prod_{1\le i\le d}} (v_{2i-1}, v_{2i}),
\end{eqnarray}
where $\stackrel{\rightarrow}{\prod}_{1\le i\le d}(v_{2i-1}, v_{2i})
=(v_{1}, v_{2})(v_{3}, v_{4})(v_{2d-1}, v_{2d})$.
The symmetric group $\Sym_{2d}$ acts on $V^{\otimes_\Lambda 2d}$ by \eqref{eq:sym},
and corresponding to each $\pi\in\Sym_{2d}$, we have the function
$\delta\circ\pi: V^{\otimes_\Lambda 2d}\longrightarrow \Lambda$.

\begin{theorem}\label{thm:fft-osp}
Let $V$ be as described above, and let $G=\OSp(V)$ be the orthosymplectic supergroup.
Set $W= (V^*)^{\otimes_\Lambda r}$, and denote by $W^G$ the  $\Lambda$-submodule of $G$-invariants
of $W$.
\begin{enumerate}
\item If $r$ is odd, then $W^G=0$.
\item If $r=2d$ is even, then $W^G$ is spanned over $\Lambda$ by $\{\delta\circ\pi \mid \pi\in\Sym_{2d}\}$.
\end{enumerate}
\end{theorem}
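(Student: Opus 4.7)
Part (1) is immediate: the element $-\id_V$ lies in $\OSp(V)$ and acts on $W = (V^*)^{\otimes_\Lambda r}$ as multiplication by $(-1)^r$, so any invariant must satisfy $w = -w$, forcing $W^G = 0$ when $r$ is odd.

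For Part (2), with $r = 2d$, my plan is to combine Proposition \ref{prop:BW-OSp}, the Key Lemma, and the various isomorphisms of Section \ref{sect:gl-iso} to reduce the statement to the FFT for $\GL(V)$ (Theorem \ref{thm:fft-gl}), and then translate the resulting $\GL(V)$-contractions back into the $\delta \circ \pi$. First, Proposition \ref{prop:BW-OSp} identifies $W^G$ with $\Gamma_r^G$ via $L \mapsto \Psi_L$. For any $L \in W^G$ and fixed $w \in T^r(V)$, the function $X \mapsto \Psi_L(X, w)$ is an $\OSp(V)$-invariant polynomial function on $\bE = \End_\Lambda(V)_{\bar 0}$ of degree $r$; by the Key Lemma together with Lemma \ref{lem:omega} it factors uniquely as $\Psi_L(X, w) = \tilde\Phi_L(X^\dag X, w)$ for some $\tilde\Phi_L \in \cP^d[\bE^+] \otimes_\Lambda T^r(V^*)$. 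The right $\GL(V)$-invariance $\Psi_L(Xg, g^{-1}w) = \Psi_L(X, w)$, combined with $(Xg)^\dag(Xg) = g^\dag(X^\dag X)g$, then becomes
\[
\tilde\Phi_L(g^\dag Y g,\; g^{-1}w) = \tilde\Phi_L(Y, w), \qquad g \in \GL(V).
\]

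I would next transport this picture through $\zeta^*: S^2(V^*)_{\bar 0} \to \bE^+$ (Lemma \ref{lem:iso}), the identification $\cP^d[\bE^+] \cong S^d((\bE^+)^*)$ of Proposition \ref{prop:poly-polyfn}, and the duality $\phi: V \to V^*$ of Lemma \ref{lem:isodual}. This should realize $W^G$ as the $\GL(V)$-invariants inside a $\GL$-equivariant quotient of $T^{2d}(V) \otimes_\Lambda T^{2d}(V^*)$ with appropriate parity constraints. Theorem \ref{thm:fft-gl} then produces an $\Lambda$-spanning set of such invariants indexed by $\pi \in \Sym_{2d}$, namely the contractions $\gamma_\pi$.

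Finally, I would verify that the isomorphism $L \mapsto \Psi_L$ sends $\delta \circ \pi$ to (a scalar multiple of) $\gamma_\pi$ in the transported picture. The key identity, for $\pi = 1$, is the direct calculation
\[
\Psi_\delta(X, v_1 \otimes \cdots \otimes v_{2d}) = \prod_{i=1}^d (Xv_{2i-1}, Xv_{2i}) = \prod_{i=1}^d (v_{2i-1}, (X^\dag X) v_{2i}),
\]
which is manifestly polynomial of degree $d$ in $Y = X^\dag X$ and linear in each $v_j$; the case of general $\pi$ reduces to this by permutation. Hence $\{\delta \circ \pi \mid \pi \in \Sym_{2d}\}$ spans $W^G$ over $\Lambda$.

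The main obstacle will be reconciling the left and right $\GL(V)$-actions during the transport in the second paragraph. The action $Y \mapsto g^\dag Y g$ on $\bE^+$ equals $\Upsilon_{g^{-1}}$ (using $\widehat{g^{-1}} = g^\dag$ for even $g$, via \eqref{eq:involution}), so the natural $\GL(V)$-action on $S^2(V^*)$ inherited through $\zeta^*$ differs from the one coming from right multiplication by $g$ on $X$; this twist must be matched cleanly against the $g^{-1}$-action on the $T^r(V^*)$ factor before the $\GL$-FFT can be invoked, and the parity signs built into the supersymmetry of $(\,,\,)$ must be tracked through each step.
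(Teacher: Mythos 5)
Your proposal is correct and follows essentially the same route as the paper: identify $W^G$ with $\Gamma_r^G$ via Proposition \ref{prop:BW-OSp}, factor through $\omega(X)=X^\dag X$ using the Key Lemma, transport the resulting polynomial on $\bE^+$ through $\zeta^*$ to invariants of $T^{2d}(V^*)\otimes T^{2d}(V)$, and invoke Theorem \ref{thm:fft-gl}. The "obstacle" you flag about reconciling the two $\GL(V)$-actions is exactly what the paper's explicit formulation of $\Upsilon$ and $\hat\Upsilon^*$ is designed to handle, and your sanity check $\Psi_\delta(X,v_1\ot\dots\ot v_{2d})=\prod_i(v_{2i-1},X^\dag Xv_{2i})$ matches the paper's closing observation that $\delta_\pi$ restricted to $\id_V^{\ot d}\ot T^{2d}(V)$ equals $\pm\delta\circ\pi$.
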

\begin{proof}
We have already observed that $W^G= 0$ if $r$ is odd,  and hence it remains only to prove (2).
We will do this by using the Key Lemma to reduce the statement to
the first fundamental theorem of invariant theory for the general linear supergroup,
Theorem \ref{thm:fft-gl} above.

Let $\cP=\cP[\bE^+]$ be the $\Lambda$-superalgebra of polynomial functions $\bE^+ \longrightarrow\Lambda$,
and denote by $\cP^k$ the submodule of homogeneous polynomial functions of degree $k$.
Then $\cP$ is a $\GL(V)$-module with action defined for any $f\in\cP$ and $g\in\GL(V)$ by
\[
(g.f)(A)=f(\Upsilon_{g^{-1}}(A)),  \quad  \forall A\in \bE^+\text{ and }f\in\cP,
\]
where $\Upsilon$ is the action of $\GL(V)$ on $\End_\La(V)$ defined in \eqref{eq:gl-action}.
Thus $\cP\otimes_\Lambda {T^r(V)}^*$ is also
a  $\GL(V)$-module under the diagonal action, which we denote by $\hat\Upsilon^*$.

Let $L\in \big({T^{2d}(V)}^*\big)^G$. By Proposition \ref{prop:BW-OSp},
there exists a unique $\Psi_L\in \Gamma_{2d}^G$ such that $L=\Psi_L(\id)$.
Recall that $\Gamma_{2d}^G=
\left(\cP^{2d}[\bE]^{{\hat L}(G)}\otimes_\Lambda {T^{2d}(V)}^*\right)^{{\hat R}(GL(V))}
\subset \cP^{2d}[\bE]^{{\hat L}(G)}\otimes_\Lambda {T^{2d}}(V)^*$,
where $\cP^{2d}[\bE]$ is the $\Lambda$-module of degree $2d$ homogeneous polynomial functions on $\bE$.
It follows from  Lemma \ref{lem:key} that there exists a function
$F_L\in \cP^d[\bE^+]\otimes_\Lambda {T^r(V)}^*$ such that
\[
F_L(X^\dag X, w)=F_L(\omega(X), w)=\Psi_L(X, w), \quad \forall X\in \bE, \ w\in T^{2d}(V).
\]
Note that $\omega(X) := X^\dag X\in \bE^+$ for all $X\in \bE$, and for
any $g\in \GL(V)$, we have
\[
\omega(Xg^{-1}) = \hat{g}\omega(X) g^{-1}= \Upsilon_g(\omega(X)).
\]
Thus
$
(\hat{R}_g.\Psi_L)(X, w) = \Psi_L(X g, g^{-1}.w) =
F_L(\Upsilon_{g^{-1}}(\omega(X), g^{-1}.w).
$
That is, for $X\in \bE$ and $g\in\GL(V)$,
\[
(\hat{R}_g.\Psi_L)(X, w)  =(\hat\Upsilon^*_g(F_L))(\omega(X), w).
\]
Since $\Psi_L$ is invariant under $\hat R(\GL(V))$ (see Proposition \ref{prop:Borel-Weil}),
we have $\hat\Upsilon^*_g(F_L)=F_L$ for all $g\in\GL(V)$.

It follows from Lemma \ref{lem:iso} that $\bE^+\cong S^2(V^*)_{\bar0}$ as
$\GL(V)$-module over $\Lambda_{\bar 0}$, where $\GL(V)$ acts on $\bE^+$
via $\Upsilon$ and on $S^2(V^*)$ via the inherited action from $T^2(V^*)$
(cf. Corollary \ref{cor:gl-on-s}). Thus $F_L$ corresponds to a
function $H_L$ on $S^2(V^*)_{\bar0}\times T^{2d}(V)$, which is homogeneous of degree
$d$ in $S^2(V^*)_{\bar0}$ and $\Lambda$-linear in $T^{2d}(V)$.
By Definition \ref{def:polyfn} we may regard $H_L$ as a
$\GL(V)$-invariant $\Lambda$-linear function
\[
H_L: S^2(V^*)^{\otimes_\Lambda d}\otimes_\Lambda T^{2d}(V)\longrightarrow\Lambda.
\]
Now using the natural map $T^{2d}(V^*)\lr T^d(S^2(V^*))$, $H_L$ may be lifted to a $\GL(V)$-invariant function
\[
\tilde H_L: T^{2d}(V^*)\otimes_\Lambda T^{2d}(V)\longrightarrow\Lambda.
\]

For elements $\phi_1\ot\dots\ot\phi_{2d}\in T^{2d}(V^*)$, and
$v=v_1\otimes v_2\otimes\dots\otimes v_{2d}\in T^{2d}(V)$,
it follows from Theorem \ref{thm:fft-gl} that $\tilde H_L$ is a $\La$-linear combination of functions of the form
$$
\delta_\pi'(\phi_1\ot\dots\ot\phi_{2d}\ot v_1\otimes v_2\otimes\dots\otimes v_{2d})
=\phi_1(v_{\pi (1)})\dots\phi_{2d}(v_{\pi(2d)})
$$
for some permutation $\pi\in\Sym_{2d}$.

Restricting $\tilde H_L$ to $S^2(V^*)^{\otimes_\Lambda d}\otimes_\Lambda T^{2d}(V)$, it follows that
if $U_1,\dots, U_d\in \bE^+\cong S^2(V^*)_{\bar0}$, then $H_L(U_1\ot\dots\ot U_d\ot v_1\ot\dots\ot v_{2d})$
is a $\La$-linear combination of the functions
$$
\delta_\pi (U_1\ot\dots\ot U_d\ot v_1\ot\dots\ot v_{2d})=(v_{\pi(1)},U_1v_{\pi(2)})\dots(v_{\pi(2d-1)},U_dv_{\pi(2d)}).
$$
This may be seen by a simple calculation when $U_i=\phi_{w_i,w_{i+1}}=\zeta^*(w_i\ot w_{i+1})$, and the general
statement follows by extending linearly.



But for $X\in \bE^+$ and $w=w_1\ot\dots\ot w_{2d}\in T^{2d}(V)$, $F_L(X,w)=H_L(X\ot X\ot\dots\ot X,w)$,
and further,
$L(w)=\Psi_L(\id_V,w)=F_L(\id_V,w)$. It follows that $L( w_1\ot\dots\ot w_{2d})$
is a linear combination of the functions
$\delta_\pi$, restricted to $\id_V^{\ot d}\ot T^{2d}(V)$. But on this set, it is clear from
the formula above that $\delta_\pi=\pm \delta\circ\pi$, and the Theorem
is proved.
\end{proof}

Theorem \ref{thm:fft-osp} can be sharpened slightly.
In the case $r=2$, we let
\begin{eqnarray}\label{eq:singlet}
\hat{c} = \sum_{a, b=1}^{m+2n}\phi_{e_a}\otimes \eta^{a b} \phi_{e_b},
\end{eqnarray}
where $\eta^{a b}$ is the entry of $\eta^{-1}$ in the $a$-th row and $b$-th column.
Then $\hat{c}(v\otimes w)=(v, w)$ for all
$v, w\in V$. Note that $\zeta^*(\hat{c})
=\sum_{a, b=1}^{m+2n}e_a\otimes \eta^{a b} \phi_{e_b}$ is the identity element of
$\bE$. Thus $\hat{c}\in (V^*\otimes V^*)^G$.
Since $\tau(\hat{c})=\hat{c}$,
the statement of the theorem reduces to $(V^*\otimes V^*)^G=\Lambda \hat{c}$.

For general $r=2d$, we note that $\delta=\hat{c}^{\otimes d}$, where $\delta$ is the function
\eqref{eq:contraction}. Thus $\left({V^*}^{\otimes 2d}\right)^G$ is spanned by
$\hat{c}^{\otimes d}\circ\pi=\rho^*_r(\pi).\hat{c}^{\otimes d}$ with $\pi\in\Sym_{2d}$.

Let $\iota$ be the involution $(1,2)(3,4)\dots(2d-1,2d)\in\Sym_{2d}$. Then the centraliser
$Z_{\Sym_{2d}}(\iota)\cong \Sym_d\ltimes\Z_2^d$ is the precise stabiliser in $\Sym_{2d}$
of $\hat{c}^{\otimes d}$.


\begin{remark}\label{rem:diagbij}
Let $\cT_r:=\Sym_{2d}/Z_{\Sym_{2d}}(\iota)$.
Then $\cT_r$ can be identified with
the set of all pairings of the elements of $\{1,2,\dots,2r\}$,
i.e. the partitioning of $\{1,2,\dots,2r\}$ into
a disjoint union of pairs.
\end{remark}
Now the nontrivial case of Theorem \ref{thm:fft-osp} can be re-stated as follows.
\begin{corollary}\label{cor:fft-osp}
The subspace $\left({V^*}^{\otimes 2d}\right)^{\OSp(V)}$ of invariants is spanned
over $\Lambda$ by the set $\{\gamma_{[\sigma]}=\hat{c}^{\otimes d}\circ\sigma\mid [\sigma]\in\cT_r\}$, where
$\sigma$ is a representative of $[\sigma]$.
\end{corollary}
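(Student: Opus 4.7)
The plan is to derive this result as a straightforward repackaging of Theorem \ref{thm:fft-osp}. By that theorem together with the observation $\delta=\hat{c}^{\otimes d}$ recorded in the paragraph following its proof, the space $({V^*}^{\otimes 2d})^{\OSp(V)}$ is already $\La$-spanned by the family $\{\hat{c}^{\otimes d}\circ\pi\mid\pi\in\Sym_{2d}\}$. The only additional statement required is that $\hat{c}^{\otimes d}\circ\pi$ depends on $\pi$ only through its coset $[\pi]\in\cT_r$; once this is established, selecting one representative from each coset yields the asserted spanning set.

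For the coset-independence, I would show that every $h\in H:=Z_{\Sym_{2d}}(\iota)$ satisfies $\hat{c}^{\otimes d}\circ h=\hat{c}^{\otimes d}$ as $\La$-linear functionals on $V^{\otimes 2d}$. Since $H\cong\Sym_d\ltimes\Z_2^d$, it suffices to verify this on the two families of generators separately. First, the within-pair transpositions $s_i=(2i-1,2i)$, which generate the $\Z_2^d$-factor, act non-trivially only in the $i$-th tensor-slot pair, where invariance reduces to the identity $\hat{c}\circ\tau=\hat{c}$; this is the supersymmetry of the form, already noted in the paper as $\tau(\hat{c})=\hat{c}$. Second, the elements of the $\Sym_d$-factor permute entire blocks $v_{2i-1}\otimes v_{2i}$; on $V^{\otimes 2d}$ this produces a super-sign which is a product of factors $(-1)^{([v_{2i-1}]+[v_{2i}])([v_{2j-1}]+[v_{2j}])}$, while applying $\hat{c}^{\otimes d}$ forces every contraction $(v_{2k-1},v_{2k})$ to lie in $\La_{\bar0}$ by evenness of the form. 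Hence both the super-signs on the tensor side and the would-be commutators in $\La$ on the scalar side are trivial, and $\hat{c}^{\otimes d}$ is indeed fixed by $h$.

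With coset-independence established, the set $\{\gamma_{[\sigma]}=\hat{c}^{\otimes d}\circ\sigma\mid[\sigma]\in\cT_r\}$ spans $({V^*}^{\otimes 2d})^{\OSp(V)}$ over $\La$, as claimed. The bijection between $\cT_r$ and pairings of $\{1,\dots,2d\}$ promised in Remark \ref{rem:diagbij} is then transparent: a representative $\pi$ determines the pairing $\{\{\pi(2i-1),\pi(2i)\}\mid 1\leq i\leq d\}$, and two representatives $\pi,\pi'$ determine the same pairing precisely when $\pi\inv\pi'\in H$.

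The only mild obstacle I foresee is in the second half of the stabilizer verification, where one must track super-signs arising from block permutations; however, once one observes that each contraction value $(v_{2k-1},v_{2k})$ automatically lands in $\La_{\bar0}$, both the tensor-side signs and the $\La$-side commutators collapse and there is essentially nothing to check.
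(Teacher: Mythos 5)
Your overall strategy is the one the paper uses implicitly: derive the corollary from Theorem \ref{thm:fft-osp} together with the claim (which the paper asserts without proof) that $Z_{\Sym_{2d}}(\iota)$ is the stabiliser of $\hat c^{\otimes d}$. Your handling of the $\Z_2^d$-factor via $\tau(\hat c)=\hat c$ is correct. However, the justification you give for the $\Sym_d$-factor contains a false intermediate claim. You write that ``applying $\hat c^{\otimes d}$ forces every contraction $(v_{2k-1},v_{2k})$ to lie in $\La_{\bar 0}$ by evenness of the form.'' This is not so: evenness of the form means $(v,w)\in\La_{[v]+[w]}$, so when $[v_{2k-1}]+[v_{2k}]=\bar 1$ the contraction lies in $\La_{\bar 1}$. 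A concrete example: $v_{2k-1}=e_1$, $v_{2k}=e_1\xi$ with $\xi\in\La_{\bar 1}$ gives $(v_{2k-1},v_{2k})=\xi\ne 0$, an odd element of $\La$. Consequently the tensor-side super-signs and the $\La$-side commutator signs are \emph{not} individually trivial, as you claim.

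They do, however, cancel exactly. For the adjacent-block transposition $\sigma=(2i-1,2i+1)(2i,2i+2)\in\Sym_{2d}$, applying $\varpi_{2d}(\sigma)$ to $v_1\otimes\dots\otimes v_{2d}$ produces the tensor-side sign $(-1)^{([v_{2i-1}]+[v_{2i}])([v_{2i+1}]+[v_{2i+2}])}$. Then applying $\delta=\hat c^{\otimes d}$ and reordering the scalars $(v_{2i+1},v_{2i+2})$ and $(v_{2i-1},v_{2i})$ in the graded-commutative algebra $\La$, using $[(v_a,v_b)]=[v_a]+[v_b]$, produces precisely the same sign again; the product of the two is $1$, so $\hat c^{\otimes d}\circ\sigma=\hat c^{\otimes d}$. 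Since such transpositions generate the $\Sym_d$-factor of $H$, this repairs your argument, and the remainder of your proposal --- the reduction to cosets $\cT_r$ and the resulting spanning statement --- then goes through as you describe.
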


\subsection{FFT in polynomial form}\label{sect:osp-poly}

Let $\C^{p|q}$ be the $\Z_2$-graded vector space with $\sdim \C^{p|q}=(p,q)$, and
let $\Lambda^{p|q}:=\C^{p|q}\otimes_\C\Lambda$. We continue to write $V$ for the
orthosymplectic space with $\sdim V_\C=(m|2n)$.

Define $\cH:=\Hom_\Lambda(\lpq,V)$. This is a $\Z_2$-graded free $\La$-module which may be identified with
a space of matrices as in \S\ref{sect:lin-alg}.

We briefly recall this identification. If $e_1,\dots,e_{p+q}$ is
the standard homogeneous basis of $\C^{p|q}$, it is also a homogeneous $\Lambda$-basis of $\lpq$.
We similarly take $b_1,\dots,b_{m+2n}$ to be a homogeneous $\La$-basis of $V$. If  $w=\sum_{i=1}^{p+q}e_i\la_i$,
we say that the coordinate vector
$M(w)=
\begin{pmatrix} \la_1\\ \la_2\\ \cdot\\ \cdot\\ \la_{p+q}
\end{pmatrix}
$.
If $\phi\in\cH$, define the matrix $M(\phi)=(a_{ij})\in\cM(m|2n\times p|q;\La)$ by
$\phi(e_i)=\sum_{j=1}^{m+2n}b_ja_{ji}$. It is then easily verified that (cf. Lemma
\ref{lem:superlin}) in the obvious notation, $M(\phi(w))=M(\phi)M(w)$, and that
the map $\phi\mapsto M(\phi)$ is an isomorphism: $\cH\overset{\sim}{\lr}\cM(m|2n\times p|q;\La)$.
Note that
$$
A=(a_{ij})=M(\phi)=(u_1,\dots,u_{p+q}),
$$
 where the $u_i$ are the coordinate vectors of the $\phi(e_i)\in V$.
This provides an explicit identification of $\cH$ with $V\oplus V\oplus\dots\oplus V$ ($p+q$ summands).

Moreover the even elements of this space (i.e. $\cH_{\bar0}$) may be identified as those matrices
$A\in\cM(m|2n\times p|q;\La)$ with the entries of $A_{ij}$ in $\La_{\bar0}$ if $i=j$,
and in $\La_{\bar1}$ if $i\neq j$, where, in the obvious notation,
$$
A=\bordermatrix{&p&q&\cr
                m&A_{11} & A_{12}\cr
                2n& A_{21} &  A_{22}\cr}.
$$

Denote by $\cP:=\cP[\cH_{\bar0}]$ the $\Lambda$-superalgebra (i.e. graded-commutative, graded $\La$-algebra)
of polynomial functions on $\cH_{\bar0}$.
Given a function $f\in\cP[\cH_{\bar0}]$, we write $f(A)$ as
$
f(A)=f(u_1, u_2, \dots,  u_{p+q})
$
where $A=(u_1\, u_2\,\dots\,  u_{p+q})$ as explained above.

Note that $\GL(V)$ acts on $\cH$, preserving $\cH_{\bar0}$, via $(g\phi)(w)=g(\phi(w))$,
for $g\in\GL(V)$, $\phi\in\cH$ and $w\in \lpq$. This action transfers to a degree-preserving
action on $\cP$: if $f\in\cP$,
then $gf(\phi):=f(g\inv \phi)$. If $G$ is a subgroup of $\GL(V)$, then of course the above action
may be restricted to the subgroup $G$. We shall apply this with $G=\OSp(V)$.
It is evident that the subset $\cP[\cH_{\bar0}]^G$ of polynomial functions invariant
under $G=\OSp(V)$ forms a sub-$\La$-superalgebra of $\cP$.

Observe that we have the elements $f_{ij}$ ($1\leq i\leq j\leq p+q$) in the subalgebra
$\cP^2[\cH_{\bar0}]^G$, where in the above notation,
$$
f_{ij}(u_1,\dots,u_{p+q})=(u_i,u_j).
$$
The invariance of these functions is evident since $\OSp(V)$ is the isometry group of the
form $(-,-)$. The fact that $f_{ij}\in\cP^2$ follows from an argument similar to the proof
of Lemma \ref{lem:polyscal}.

The following statement may easily be shown to be equivalent to Theorem \ref{thm:fft-osp}.
 \begin{corollary}\label{cor:fft-osp-poly}
 Keep notation as above. Let $G=\OSp(V)$. Then $\cP[\cH_{\bar0}]^G$ is generated by
the functions $f_{ij}$ above.

More explicitly,
If $f\in \cP[\cH_{\bar0}]^G$, then for $A=(u_1\, u_2\,\dots\,  u_{p+q})\in \cH_{\bar0}$,
 $f$ is a polynomial in the quadratics $(u_a, u_b)$ $(a, b=1, 2, \dots, p+q)$.
 \end{corollary}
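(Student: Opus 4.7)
The plan is to deduce Corollary \ref{cor:fft-osp-poly} from \thmref{thm:fft-osp} by a standard polarization-restitution argument adapted to the graded-commutative setting. First, by \propref{prop:poly-polyfn} the graded $\La$-algebra $\cP[\cH_{\bar0}]$ is identified with the graded symmetric algebra $S(\cH^*)$. Writing $N = p + q$ and decomposing $\cH^* \cong \bigoplus_{i=1}^N V_i^*$, where $V_i^* \cong V^*$ carries the parity inherited from the $i$-th basis vector of $\lpq$, one obtains a $\GL(V)$-stable multi-grading
\[
S^r(\cH^*) = \bigoplus_{r_1 + \cdots + r_N = r} S^{r_1}(V_1^*) \otimes_\La \cdots \otimes_\La S^{r_N}(V_N^*),
\]
because $\GL(V)$ acts only on the $V^*$ factors and trivially on the column index. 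It therefore suffices to work at each fixed multi-degree $\underline r = (r_1, \ldots, r_N)$ and show that every $G$-invariant in the corresponding summand is a polynomial in the $f_{ij}$.

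Second, I would exploit \remref{rem:syminv}: each $S^{r_i}(V_i^*)$ embeds $\GL(V)$-equivariantly into $T^{r_i}(V_i^*)$ as the image of the graded symmetrizer $e(r_i) \in \La\Sym_{r_i}$. Concatenating tensor positions yields a $\GL(V)$-equivariant embedding
\[
S^{r_1}(V_1^*) \otimes_\La \cdots \otimes_\La S^{r_N}(V_N^*) \hookrightarrow T^{r_1}(V^*) \otimes_\La \cdots \otimes_\La T^{r_N}(V^*) \cong T^r(V^*),
\]
so any $G$-invariant $f$ of multi-degree $\underline r$ lifts to a $G$-invariant $\tilde f \in T^r(V^*)^G$. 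By \thmref{thm:fft-osp}, $\tilde f$ vanishes unless $r = 2d$ is even, and in that case $\tilde f$ is a $\La$-linear combination of the functions $\delta \circ \pi$ with $\pi \in \Sym_{2d}$.

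Third, to recover $f$ from $\tilde f$ I would restitute: under the embedding, the evaluation $f(u_1, \ldots, u_N)$ for $(u_1, \ldots, u_N) \in \cH_{\bar0}$ equals the pairing of $\tilde f$ with the tensor $u_1^{\otimes r_1} \otimes \cdots \otimes u_N^{\otimes r_N} \in T^r(V)$. Applying \eqnref{eq:contraction} to each $\delta \circ \pi$ on such a tensor produces a product of $d$ factors of the form $(u_a, u_b)$, with $(a, b)$ determined by $\pi$ together with the block structure of $\underline r$. This exhibits $f$ as a $\La$-linear combination of products of the quadratics $f_{ij}(u_1, \ldots, u_N) = (u_i, u_j)$, proving the corollary.

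The step requiring most care is the super-sign bookkeeping in the polarization. The parity shift on $V_i^*$ for $i > p$ alters both the symmetrizer idempotent $e(r_i)$ (via the signed $\La\Sym_{r_i}$-action of \eqref{eq:sym}) and the identification $V_i^* \cong V^*$, so every passage between $S$ and $T$ produces signs in principle. Fortunately these signs are precisely those built into Definition~\ref{def:symalg} and the pairing $\gamma$ of \eqref{eq:contr-tensor}, so they match up cleanly and the $\delta \circ \pi$'s restitute to signed products of the $f_{ij}$, yielding the required generation statement.
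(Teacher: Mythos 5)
Your proof is correct and is essentially the argument the paper has in mind: the paper states that Corollary~\ref{cor:fft-osp-poly} ``may easily be shown to be equivalent to Theorem~\ref{thm:fft-osp}'' without giving details, and your polarization--restitution argument is precisely that standard reduction. Decomposing $\cH^*\cong\bigoplus_i V_i^*$ into columns to get the multigrading, lifting a fixed multi-degree component of a $G$-invariant $f$ to a tensor invariant $\tilde f\in T^r(V^*)^G$ via the $\GL(V)$-equivariant embeddings $S^{r_i}(V_i^*)\hookrightarrow T^{r_i}(V_i^*)$, applying Theorem~\ref{thm:fft-osp}, and restituting along $u_1^{\ot r_1}\ot\cdots\ot u_N^{\ot r_N}$ is exactly the intended route. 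Your remark about sign bookkeeping is well taken: the parity shift on $V_i^*$ for $i>p$ turns the symmetrizer on $T^{r_i}(V_i^*)$ into a twisted idempotent on $T^{r_i}(V^*)$, but since the twist is $\GL(V)$-trivial it does not affect equivariance, and the pairing $\gamma$ of \eqref{eq:contr-tensor} absorbs the resulting signs, so each $\delta\circ\pi$ restitutes to a signed product of the quadratics $(u_a,u_b)$.
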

We should point out that Corollary \ref{cor:fft-osp-poly} is equivalent to
Segeev's \cite[Theorem 5.3]{S1}.

\section{Brauer diagrams and endomorphism algebras of $\OSp(V)$}

In this section, we describe the relationship between the Brauer algebra
of degree $r$ and endomorphism algebra $\End_{\OSp(V)}(V^{\otimes_\Lambda r})$.
We shall do this by working within the framework of the Brauer category
introduced in \cite{LZ5}.

\subsection{Categorical version of the first fundamental theorem}

The Brauer category \cite{LZ5} can be generalised to the setting of the
Grassmann algebra $\Lambda$.
Fix once and for all an element $\delta$ of $\Lambda_{\bar 0}$.
Denote by ${B}_k^\ell(\delta)$ the free
$\Lambda$-module with an even homogeneous basis consisting of
$(k, \ell)$ Brauer diagrams. Then the composition and tensor product of
Brauer diagrams can be extended to $\Lambda$-bilinear maps
\begin{eqnarray}\label{eq:products}
\begin{aligned}
&&\text{composition} & \quad \circ:  & & B_\ell^p(\delta)
\times B_k^\ell(\delta)\longrightarrow B_k^p(\delta),  \\
&&\text{tensor product} & \quad  \otimes: & & B_p^q(\delta)
\times B_k^\ell(\delta)\longrightarrow B_{k+p}^{q+\ell}(\delta).
\end{aligned}
\end{eqnarray}
The {\em Brauer category} $\cB(\delta)$
is the pre-additive small category equipped with the tensor product bi-functor $\otimes$
such that
\begin{enumerate}
\item the set of objects is $\N=\{0, 1, 2, \dots\}$,  and for any pair of objects $k, l$,
$\Hom_{\cB(\delta)}(k, l)$ is the $\Lambda$-module $B_k^l(\delta)$; the composition of morphisms is
given by the composition of Brauer diagrams;
\item the tensor product $k\otimes l$ of objects  $k, l$ is  $k+l$ in $\N$, and the
tensor product of morphisms is given by the tensor product of Brauer diagrams.
\end{enumerate}

Theorem 2.6 in  \cite{LZ5}  is still valid in the present setting.
The four elementary Brauer diagrams
\begin{center}
\begin{picture}(205, 40)(-5,0)
\put(0, 0){\line(0, 1){40}}
\put(5, 0){,}

\put(40, 0){\line(1, 2){20}}
\put(60, 0){\line(-1, 2){20}}
\put(65, 0){,}

\qbezier(100, 0)(115, 60)(130, 0)
\put(135, 0){,}

\qbezier(170, 30)(185, -30)(200, 30)
\put(200, 0){,}
\end{picture}
\end{center}
which will be denoted by $I$, $X$, $A$ and $U$ respectively,
generate all Brauer diagrams by composition and tensor product.
The complete set of relations among these generators is that described in
\cite[Theorem 2.6(2)]{LZ5}.

\begin{lemma}{\rm{(\cite[Corollary 2.16]{LZ5})}}\label{lem:B-iso}
Let \[
\begin{aligned}
&A_q=A\circ (I\ot A\ot I)\dots (I^{\ot (q-1)}\ot A\ot I^{\ot (q-1)}),\\
&U_q=(I^{\ot (q-1)}\ot U\ot I^{\ot (q-1)})\circ\dots\circ (I\ot U\ot I)\circ U,\\
&I_q=I^{\ot q}.
\end{aligned}
\]
For all $p, q$ and $r$, define the $\Lambda$-linear maps
\[
\begin{aligned}
&{\mathbb U}_p^q=(-\otimes I_q)\circ(I_p\otimes U_q):
B_{p+q}^r(\delta)\longrightarrow B_p^{r+q}(\delta)\\
&{\mathbb A}^r_q=(I_{r+q}\otimes A_q)\circ(- \otimes I_q):
B_p^{r+q}(\delta)\longrightarrow B_{p+q}^r(\delta).
\end{aligned}
\]
Then ${\mathbb U}_p^q$ and ${\mathbb A}^r_q$ are mutually inverse.
\end{lemma}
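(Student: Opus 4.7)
The plan is to show that the two maps $\mathbb{U}_p^q$ and $\mathbb{A}_q^r$ are mutually inverse by verifying directly that both compositions equal the identity. The key structural ingredient is the pair of zig-zag (or snake) identities for the nested cup $U_q$ and nested cap $A_q$, namely
\[
(A_q \otimes I_q) \circ (I_q \otimes U_q) = I_q = (I_q \otimes A_q) \circ (U_q \otimes I_q)
\]
in $\cB(\delta)$. For $q=1$ these are immediate from the elementary Brauer relations of \cite[Theorem 2.6(2)]{LZ5} (the usual ``straightening of a zig-zag'' in the Brauer calculus), and the general case follows by a short induction on $q$, peeling off the outer $U$ and $A$ at each stage and using the interchange law to reduce to the previous case.

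Granting the zig-zag identities, I would compute $\mathbb{A}_q^r(\mathbb{U}_p^q(d))$ for $d\in B_{p+q}^r(\delta)$ symbolically as follows. Expand both composites using the definitions, and then repeatedly apply the functoriality of $\otimes$, that is,
\[
(f_1 \otimes g_1) \circ (f_2 \otimes g_2) = (f_1 \circ f_2) \otimes (g_1 \circ g_2),
\]
in order to group all occurrences of $d$ together and to bring the $U_q$ introduced by $\mathbb{U}_p^q$ next to the $A_q$ introduced by $\mathbb{A}_q^r$. After this regrouping one sees an expression of the form $d \circ (I_p \otimes \Omega)$, where $\Omega$ is precisely one of the zig-zag composites above on $q$ strands. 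The snake identity collapses $\Omega$ to $I_q$, leaving $d \circ I_{p+q} = d$. A wholly symmetric computation, or equivalently a top-bottom reflection of the diagrammatic argument, establishes $\mathbb{U}_p^q(\mathbb{A}_q^r(e)) = e$ for $e \in B_p^{r+q}(\delta)$.

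The main obstacle is purely combinatorial book-keeping: the composites involve several nested tensor factors, and one must be careful about the placement of the $I_q$'s in the definitions (indeed the identity strand on the right in $\mathbb{A}_q^r$ and the labelling of $I_{r+q}$ versus $I_r$ in the counit step should be checked against the conventions set up before the lemma). The cleanest way to avoid confusion is to draw the diagrams and observe pictorially that $\mathbb{U}_p^q$ bends the last $q$ input strands of $d$ downward into output strands through a cup, while $\mathbb{A}_q^r$ bends $q$ output strands back upward through a cap; the composition produces a zig-zag on those $q$ strands which is straightened out by the snake identity. No deeper input beyond the monoidal structure of $\cB(\delta)$ is required.
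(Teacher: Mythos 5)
The paper does not actually prove this lemma: it is quoted verbatim from \cite[Corollary 2.16]{LZ5} and used as a black box. Your argument supplies the missing (external) proof, and it is correct; it is the standard categorical argument. The nested cup $U_q$ and nested cap $A_q$ are precisely the coevaluation and evaluation morphisms exhibiting $q$ as its own dual in the Brauer category, so the snake identities
\[
(A_q \otimes I_q)\circ(I_q\otimes U_q) = I_q = (I_q\otimes A_q)\circ(U_q\otimes I_q)
\]
hold by induction on $q$ from the $q=1$ case (which is one of the defining relations of \cite[Theorem 2.6(2)]{LZ5}; see also relation \eqref{eq:CC} after applying $F$). Two applications of the interchange law then reduce $\mathbb{A}^r_q\circ\mathbb{U}_p^q$ to $d\mapsto d\circ\bigl(I_p\otimes\bigl[(I_q\otimes A_q)\circ(U_q\otimes I_q)\bigr]\bigr)=d$, and the reverse composite reduces symmetrically. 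One small remark you already flagged and which is worth recording: for the composition in $\mathbb{A}^r_q$ to typecheck, the factor written $I_{r+q}$ in the statement must be $I_r$ (an element $e\in B_p^{r+q}$ has $e\otimes I_q$ landing in $B_{p+q}^{r+2q}$, and it is $I_r\otimes A_q:r+2q\to r$ that brings it back to $B_{p+q}^r$); this is a typographical slip carried over from the citation, not a genuine discrepancy with your proof.
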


Let $V=V_\C\otimes \Lambda$ be the orthosymplectic superspace defined in
Section \ref{sect:orthosym} with $\sdim V_\C=(m|2n)$.
We consider the following category of $\OSp(V)$-representations.
\begin{definition}
Let $G=\OSp(V)$. We denote by $\cT_G(V)$ the full subcategory of $G$-modules
with objects $V^{\otimes_\Lambda r}$ ($r=0, 1, \dots$).
The tensor product over $\Lambda$ of $G$-modules and
of $G$-equivariant maps is a bi-functor $\cT_G(V)\times \cT_G(V)\longrightarrow \cT_G(V)$.
Call $\cT_G(V)$ the {\em category of tensor representations of $G$}.
\end{definition}

Now we consider some morphisms in $\cT_G(V)$. Let $c_0=(\phi^{-1}\otimes\phi^{-1})(\hat{c})$,
where $\hat{c}\in V^*\otimes V^*$ is defined by \eqref{eq:singlet}.
Then $c_0=\sum_{a,b=1}^{m+2n} e_a\otimes\eta^{a b} e_b$, which
is canonical in that it is independent of the basis $\cE=(e_1, e_2, \dots, e_{m+2n})$,
and is invariant under $\OSp(V)$. Consider the $\Lambda$-linear maps
\begin{eqnarray}\label{P-C-C}
\begin{aligned}
&\tau: V\otimes V\longrightarrow V\otimes V, \quad v\otimes w \mapsto (-1)^{[v][w]}w\otimes v, \\
&\check{C}: \Lambda \longrightarrow V\otimes V,  \quad 1\mapsto c_0, \\
&\hat{C}: V\otimes V \longrightarrow \Lambda, \quad v\otimes w\mapsto (v, w).
\end{aligned}
\end{eqnarray}

The following result is the analogue of \cite[Lemma 3.1]{LZ5} in the present setting.
\begin{lemma} \label{lem:PAU}
Let $G=\OSp(V)$ and $d=m-2n$, where $\sdim V_\C=(m|2n)$, and denote
 by $\id$ the identity map on $V$.
Then the maps $\tau$, $\check{C}$ and $\hat{C}$ are all $G$-equivariant. Furthermore,
they satisfy the following relations:
\begin{eqnarray}
&&\tau^2=\id^{\ot 2}, \quad
(\tau\ot \id)(\id\ot \tau)(\tau\ot \id) = (\id\ot \tau)(\tau\ot\id)(\id\ot \tau),
\label{eq:PPP}\\
&&\tau \check{C} =  \check{C}, \quad
\hat{C} \tau =   \hat{C}, \label{eq:fse-es}\\
&&\hat{C}\check{C}=d,  \quad (\hat{C}\ot\id)(\id\ot\check{C})=\id=(\id\ot\hat{C})(\check{C}\ot\id), \label{eq:CC}\\
&&(\hat{C}\ot\id)\circ (\id\ot \tau)= (\id\ot\hat{C})\circ (\tau\ot\id), \label{eq:CPC-I}\\
&&(\tau\ot \id)\circ(\id\ot \check{C})=(\id\ot \tau)\circ(\check{C}\ot \id). \label{eq:CPC-P}
\end{eqnarray}
\end{lemma}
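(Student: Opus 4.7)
The plan is to verify the two clusters of assertions separately: first $G$-equivariance of $\tau$, $\check C$, $\hat C$, and then the categorical relations \eqref{eq:PPP}--\eqref{eq:CPC-P}. Each will be reduced to a direct calculation on homogeneous basis vectors, with the main care being devoted to the super (Koszul) signs.

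For $G$-equivariance, I would proceed as follows. The map $\tau$ is in fact $\GL(V)$-equivariant: for homogeneous $v,w\in V$ and even $g\in\GL(V)$, one checks $\tau(gv\ot gw)=(-1)^{[v][w]}gw\ot gv = g\tau(v\ot w)$. Since $\OSp(V)\subseteq\GL(V)$, equivariance for $G$ is automatic. For $\hat C$, equivariance is immediate from the definition of $\OSp(V)$ as the isometry group of $(-,-)$. For $\check C$, the cleanest route is to invoke the identification $c_0=(\phi^{-1}\ot\phi^{-1})(\hat c)$ and the fact that $\hat c\in (V^*\ot V^*)^G$, which was already observed in the paragraph above \eqref{eq:singlet}. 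One must also note that on $\OSp(V)$, the twisted action $g\mapsto \hat g=(g^\dag)^{-1}$ of Lemma \ref{lem:isodual} coincides with the usual action, since $g^\dag g=\id$ forces $\hat g = g$; hence $\phi^{-1}$ is a genuine $G$-isomorphism on restriction, and $c_0$ is $G$-invariant.

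For the algebraic relations, each is a short explicit computation using the basis $\cE=(e_1,\dots,e_{m+2n})$ satisfying $(e_a,e_b)=\eta_{ab}$, together with the identity $\eta^{ab}=(-1)^{[a][b]}\eta^{ba}$ (obtained from supersymmetry by inverting $\eta$). The braid and involution identities \eqref{eq:PPP} are the standard super-permutation calculation. For \eqref{eq:fse-es}, unpack $\tau(c_0)=\sum_{a,b}(-1)^{[a][b]}\eta^{ab}e_b\ot e_a$ and reindex, using the sign rule above to recover $c_0$; the relation $\hat C\tau=\hat C$ is the same observation read on $(v,w)$. For $\hat C\check C=d$, one computes $\sum_{a,b}\eta^{ab}\eta_{ab}$: the even block $I_m$ contributes $m$, and each of the $n$ symplectic blocks $\sigma$ contributes $-2$, for a total of $m-2n=d$. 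The snake identities in \eqref{eq:CC} are the usual zig-zag computation in $\La$-linear form, using only the duality $\sum_b\eta^{ab}\eta_{bc}=\delta^a_c$.

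The genuinely delicate step is the pair \eqref{eq:CPC-I}--\eqref{eq:CPC-P}, where a cap/cup must be slid past a crossing. I will verify \eqref{eq:CPC-I} by applying both sides to a homogeneous triple $u\ot v\ot w$: the LHS produces $(-1)^{[v][w]}(u,w)v$, while the RHS produces $(-1)^{[u][v]}v(u,w)$, and commuting the scalar $(u,w)\in\La$ of parity $[u]+[w]$ past $v$ gives exactly the missing sign $(-1)^{[v]([u]+[w])}$, making the two sides agree. Relation \eqref{eq:CPC-P} follows by an analogous calculation on $c_0\ot u$, or, alternatively and more conceptually, by dualising \eqref{eq:CPC-I} via the already-established snake identities \eqref{eq:CC}, which turns \eqref{eq:CPC-P} into a purely formal consequence. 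This sign bookkeeping is the main obstacle; once it is handled carefully in \eqref{eq:CPC-I}, the remaining identity comes for free.
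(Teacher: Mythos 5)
Your proposal is correct and follows the same route as the paper's proof, namely direct verification on the orthosymplectic basis with careful Koszul-sign bookkeeping; the paper's version is terser (declaring most relations ``evident'' or ``similar'' and verifying only $\hat C\check C=d$), whereas you explicitly carry out the equivariance of $\check C$ via $\hat g=g$ on $\OSp(V)$ and the sign cancellation in \eqref{eq:CPC-I}, but these are exactly the computations the paper is implicitly invoking.
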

\begin{proof}The first statement is clear.
Now equation \eqref{eq:PPP} reflects standard properties of permutations,
and the relations \eqref{eq:fse-es} are evident.
We have
\[
\begin{aligned}
\hat{C}\check{C}&=\hat{C}(\sum_{a, b} e_a \ot\eta^{a b} e_b)
                =\sum_{a, b} \eta^{a b} (e_a,  e_b)
                =\sum_{a, b} \eta^{a b} \eta_{a b}
                =d.
\end{aligned}
\] This proves the first relation of \eqref{eq:CC}.
The proofs of the remaining relations are similar, and therefore omitted.
\end{proof}

We have the following result.
\begin{theorem}\label{thm:functor} Let $G=\OSp(V)$ and $d=m-2n$, where $\sdim V_\C=(m|2n)$.
There is a unique additive covariant functor $F: \cB(d) \longrightarrow \cT_G(V)$
of pre-additive categories with the following properties:
\begin{enumerate}
\item[(i)] $F$ sends the object
$r$ to $V^{\otimes_\Lambda r}$ and morphism $D: k \to \ell$ to
$F(D): V^{\otimes_\Lambda k}\longrightarrow V^{\otimes_\Lambda l}$ where $F(D)$
is defined on the generators of Brauer diagrams by
\begin{eqnarray}\label{eq:F-generating}
\begin{aligned}
F\left(
\begin{picture}(30, 20)(0,0)
\put(15, -15){\line(0, 1){35}}
\end{picture}\right)=\id_V,
\quad&
F\left(
\begin{picture}(30, 20)(0,0)
\qbezier(5, -15)(15, 3)(25, 20)
\qbezier(5, 20)(15, 3)(25, -15)
\end{picture}\right) = \tau, \\
F\left(
\begin{picture}(30, 20)(0,0)
\qbezier(5, 20)(15, -50)(25, 20)
\end{picture}\right) = \check{C}, \quad&
F\left(
\begin{picture}(30, 20)(0,0)
\qbezier(5, -15)(15, 50)(25, -15)
\end{picture}\right) = \hat{C};
\end{aligned}
\end{eqnarray}
\item[(ii)] $F$ respects tensor products, thus for any
objects $r, r'$ and morphisms $D, D'$ in $\cB(d)$,
\[
\begin{aligned}
&F(r\otimes r')=V^{\otimes_\Lambda r}\otimes_\Lambda V^{\otimes_\Lambda r'}=F(r)\otimes_\Lambda F(r'), \\
&F(D\otimes D')= F(D)\otimes_\Lambda F(D').
\end{aligned}
\]
\end{enumerate}
\end{theorem}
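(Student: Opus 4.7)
The strategy is to exploit the presentation of the Brauer category $\cB(d)$ by generators and relations established in \cite[Theorem 2.6]{LZ5}. As recalled in the excerpt, every morphism in $\cB(d)$ is obtained from the four elementary diagrams $I, X, A, U$ by finite iteration of the two operations in \eqref{eq:products}, and there is a complete list of relations among these generators. To construct the functor $F$, I would first declare $F(r):=V^{\otimes_\La r}$ on objects and set $F(I)=\id_V$, $F(X)=\tau$, $F(A)=\hat C$, $F(U)=\check C$ on the generating morphisms, as specified in \eqref{eq:F-generating}. Extending by composition, tensor product, and $\La$-linearity then gives at most one candidate definition of $F$ on each Hom space $B_k^\ell(d)$; this immediately yields uniqueness once well-definedness is established.

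The heart of the proof is well-definedness, i.e.\ showing that any two distinct expressions for the same Brauer diagram as an iterated $\circ$/$\otimes$ of generators are sent to the same morphism in $\cT_G(V)$. By \cite[Theorem 2.6(2)]{LZ5} this reduces to checking that the images $\id_V, \tau, \check C, \hat C$ in $\cT_G(V)$ satisfy the same defining relations as $I, X, A, U$ do in $\cB(d)$. But this has already been done: the involution and braid relations are \eqref{eq:PPP}; the "loop value equals $d$" relation and the zig-zag (snake) identities are \eqref{eq:CC}; the turn-back identities $X\check C=\check C$ and $\hat C X=\hat C$ are \eqref{eq:fse-es}; and the crossing/cap-cup compatibilities are \eqref{eq:CPC-I} and \eqref{eq:CPC-P}. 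Thus Lemma \ref{lem:PAU} provides exactly the verification required to pass from the abstract presentation of $\cB(d)$ to a well-defined assignment on $\cT_G(V)$.

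Once well-definedness is in place, property (ii) (compatibility with $\otimes$) is forced on objects by $F(r+r')=V^{\otimes_\La(r+r')}=F(r)\otimes_\La F(r')$, and on morphisms by the fact that $F$ was built up using $\otimes$ from its values on the four generators. The $G$-equivariance of $F(D)$ for every diagram $D$ follows from the first assertion of Lemma \ref{lem:PAU}, since composition and tensor product of $G$-equivariant maps remain $G$-equivariant. Additivity of $F$ is automatic because each $\Hom_{\cB(d)}(k,\ell)=B_k^\ell(d)$ is a free $\La$-module on Brauer diagrams and $F$ is extended $\La$-linearly; the target Hom spaces $\Hom_\La(V^{\otimes_\La k},V^{\otimes_\La \ell})$ are $\La$-modules of the same type.

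The only step requiring real attention, and hence the main obstacle, is confirming that the relations listed in Lemma \ref{lem:PAU} are truly a complete set matching those of \cite[Theorem 2.6(2)]{LZ5}; once this bookkeeping is done, the construction of $F$ is mechanical. A minor additional point is the $\La$-bilinearity of composition and tensor product on both sides; on the Brauer category side this was built into the extension \eqref{eq:products}, while on the representation side it follows from the $\La$-bilinearity of composition of $\La$-linear maps in the graded-commutative setting, so the functor respects the $\La$-module structure on morphism spaces and is a genuine $\La$-linear additive functor.
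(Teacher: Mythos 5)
Your proof is correct and follows the same route as the paper: define $F$ on generators, invoke the presentation of $\cB(d)$ from \cite[Theorem 2.6]{LZ5}, and use Lemma~\ref{lem:PAU} to verify that $\id_V,\tau,\check C,\hat C$ satisfy the required relations, giving well-definedness and hence existence and uniqueness. The paper's own proof is exactly this reduction, stated more briefly.
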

\begin{proof}
The proof is the same as in \cite{LZ5}. Here we merely point out that
the essential part of the proof is to show that the maps in \eqref{eq:F-generating}
satisfy all the defining relations of the elementary Brauer diagrams given in
\cite[Theorem 2.6(2)]{LZ5}. This follows from Lemma \ref{lem:PAU}.
\end{proof}

The following result is \cite[Lemma 3.6]{LZ5} adapted to the present context. We omit the
details of the proof.
\begin{lemma}\label{lem:transfrom}
Let $H_s^t = \Hom_G(V^{\otimes s}, V^{\otimes t})$ for all $s, t\in \N$.
\begin{enumerate}
\item The $\Lambda$-linear maps
\[
\begin{aligned}
&F{\mathbb U}_p^q:=(-\otimes \id_V^{\otimes q})(\id_V^{\otimes p}\otimes F(U_q)):
H_{p+q}^r \longrightarrow H_p^{r+q}, \\
&F{\mathbb A}^r_q:=(\id_V^{\otimes r}\otimes F(A_q))(- \otimes \id_V^{\otimes q}):
H_p^{r+q} \longrightarrow H_{p+q}^r
\end{aligned}
\]
are well defined and are mutually inverse isomorphisms.
\item For each pair $k, \ell$ of objects in $\cB(d)$, the functor $F$ induces a $\Lambda$-linear map
\begin{eqnarray}\label{eq:functionF}
\begin{aligned}
{F}_k^\ell: B_k^\ell(d)\longrightarrow H_k^\ell=\Hom_G(V^{\otimes k}, V^{\otimes \ell}), \quad
D \mapsto F(D),
\end{aligned}
\end{eqnarray}
and the following diagrams are commutative.
\begin{displaymath}
    \xymatrix{
        B_p^{r+q}(d) \ar[r]^{{\mathbb A}^r_q} \ar[d]_{F_p^{r+q}} &  B_{p+q}^r(d) \ar[d]^{F_{p+q}^r} \\
         H_p^{r+q}  \ar[r]_{F{\mathbb A}^r_q}  & H_{p+q}^r}
\quad\quad
\xymatrix{
         B_{p+q}^r(d) \ar[r]^{{\mathbb U}_p^q} \ar[d]_{F_{p+q}^r} &  B_p^{r+q}(d)  \ar[d]^{F_p^{r+q}} \\
         H_{p+q}^r \ar[r]_{F{\mathbb U}_p^q}  & H_p^{r+q}. }
\end{displaymath}
\end{enumerate}
\end{lemma}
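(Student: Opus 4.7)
My plan is to derive both parts from the functoriality of $F$ (Theorem~\ref{thm:functor}), the snake identities \eqref{eq:CC} of Lemma~\ref{lem:PAU}, and the purely diagrammatic identity of Lemma~\ref{lem:B-iso}. I would begin by noting that $F(U_q)\in \Hom_G(\Lambda, V^{\otimes 2q})$ and $F(A_q)\in \Hom_G(V^{\otimes 2q},\Lambda)$, since $F$ lands in $\cT_G(V)$. Consequently, tensoring a $G$-equivariant $\phi$ with $F(U_q)$ or $F(A_q)$ and with identity morphisms preserves $G$-equivariance, so $F{\mathbb U}_p^q$ and $F{\mathbb A}^r_q$ are well-defined $\Lambda$-linear maps between the claimed Hom spaces.

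To show they are mutually inverse, I would fix $\phi\in H_{p+q}^r$ and, using the super-interchange law for the tensor product, rewrite
\[
F{\mathbb A}^r_q \circ F{\mathbb U}_p^q(\phi) = (\id^{\otimes r}\otimes F(A_q))\circ(\phi\otimes \id^{\otimes 2q})\circ(\id^{\otimes p}\otimes F(U_q)\otimes \id^{\otimes q}).
\]
Iteratively applying the snake identity \eqref{eq:CC} should cancel each of the $q$ caps packaged in $F(A_q)$ against one of the $q$ cups in $F(U_q)$, collapsing the right-hand side to $\phi$. The reverse composition $F{\mathbb U}_p^q \circ F{\mathbb A}^r_q = \id$ will be handled by the symmetric snake identity. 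Conceptually this argument is simply the functorial image in $\cT_G(V)$ of the identity ${\mathbb A}^r_q\circ{\mathbb U}_p^q=\id$ of Lemma~\ref{lem:B-iso}.

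For part (2), $F_k^\ell$ is by construction the restriction of $F$ to the morphism set $B_k^\ell(d)$, so its $\Lambda$-linearity and well-definedness are immediate from Theorem~\ref{thm:functor}. I would then verify the commutativity of the two squares by unfolding the definitions: each reduces to an equality of the form $F({\mathbb A}^r_q(D))=F{\mathbb A}^r_q(F(D))$ (and similarly for ${\mathbb U}$), which follows at once from the compatibility of $F$ with composition and tensor product, together with $F(I_q)=\id_V^{\otimes q}$.

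The main technical subtlety I anticipate is the bookkeeping of Koszul signs in the super-interchange applied when $\phi$ may be of nontrivial parity. This should nonetheless remain routine, since $F(A_q)$, $F(U_q)$ and all the identity morphisms involved are even, and hence contribute no signs when slid past morphisms of arbitrary parity in the tensor reshuffling.
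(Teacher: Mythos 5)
Your proof is correct, and since the paper itself omits the argument (it simply cites \cite[Lemma 3.6]{LZ5}), you are supplying exactly the details the authors chose to suppress. The key observations — that well-definedness follows from $F(U_q),F(A_q)$ landing in Hom-spaces of $\cT_G(V)$ so that tensoring with them and with identities preserves $G$-equivariance, that the mutual-inverse property reduces to the snake identities \eqref{eq:CC} applied $q$ times, and that part (2) is a direct unfolding of the definitions using the monoidality of $F$ — are precisely the standard route. Your remark that no Koszul signs arise because $F(U_q)$, $F(A_q)$ and all identity maps are even is also the right justification; the interchange law $(f\otimes g)\circ(f'\otimes g')=(-1)^{[g][f']}(f\circ f')\otimes(g\circ g')$ never produces a sign in these manipulations even when $\phi$ is odd. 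One small caveat on phrasing: ${\mathbb U}_p^q$ and ${\mathbb A}^r_q$ are maps between Hom-$\Lambda$-modules rather than morphisms in $\cB(d)$, so they are not literally in the domain of $F$; what carries over is the diagrammatic computation proving Lemma \ref{lem:B-iso}, which is natural in the morphism being conjugated and hence applies verbatim to arbitrary $\phi\in H_{p+q}^r$, not only to those in the image of $F$. You make this point implicitly, but it is worth stating explicitly since it is what guarantees the mutual-inverse property on all of $H_{p+q}^r$.
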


The first fundamental theorem of invariant theory for $\OSp(V)$
can now be re-interpreted as follows
\begin{theorem} \label{thm:fft-cat}  The functor $F: \cB(d)\longrightarrow \cT_{\OSp(V)}(V)$ is full.
That is, $F$ is surjective on $\Hom$ spaces.
\end{theorem}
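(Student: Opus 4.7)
The plan is to reduce the statement to the case of invariant tensors, to which Theorem \ref{thm:fft-osp} applies, and then to recognise the resulting spanning set as the image of explicit Brauer diagrams under $F$. First, using Lemma \ref{lem:transfrom}(1), the isomorphism $F{\mathbb A}^0_k: H_0^{k+\ell}\stackrel{\sim}{\longrightarrow} H_k^\ell$, together with the commutative diagrams of Lemma \ref{lem:transfrom}(2), shows that it suffices to prove surjectivity of $F_0^{r}: B_0^r(d)\lr H_0^r$ for every $r\geq 0$, since $F{\mathbb A}^0_k$ transports images of Brauer morphisms (which form a $\Lambda$-submodule closed under composition with $F(A_k)$) to images of Brauer morphisms.

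Next, I would identify $H_0^r=\Hom_G(\Lambda, V^{\otimes_\Lambda r})$ with the invariant submodule $(V^{\otimes_\Lambda r})^G$ via $\phi\mapsto \phi(1)$. Because $G=\OSp(V)$ satisfies $g^\dag=g\inv$, for elements of $G$ the twisted and standard actions of Lemma \ref{lem:isodual} coincide, so the map $\phi: V\stackrel{\sim}{\longrightarrow}V^*$, $v\mapsto\phi_v$ is an honest isomorphism of $\OSp(V)$-modules. Extending this tensorially gives a $\La$-linear $G$-module isomorphism $\phi^{\otimes r}:(V^{\otimes_\Lambda r})^G\stackrel{\sim}{\longrightarrow}((V^*)^{\otimes_\Lambda r})^G$. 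By Corollary \ref{cor:fft-osp} the right-hand side vanishes when $r$ is odd (in which case $H_0^r=0$ and there is nothing to prove), and for $r=2d$ it is spanned over $\Lambda$ by the elements $\hat c^{\otimes d}\circ\sigma$, $\sigma\in\Sym_{2d}$, where $\hat c\in V^*\otimes V^*$ is the canonical invariant of \eqref{eq:singlet}.

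Transporting back along $\phi^{\otimes r}$, the module $(V^{\otimes_\Lambda 2d})^G$ is spanned over $\Lambda$ by $\varpi_{2d}(\sigma).c_0^{\otimes d}$ with $\sigma\in\Sym_{2d}$, where $c_0=(\phi\inv\otimes\phi\inv)(\hat c)\in V\otimes_\La V$ is the invariant appearing in \eqref{P-C-C}. But $c_0=F(U)(1)$, so $c_0^{\otimes d}=F(U^{\otimes d})(1)=F(U_1\otimes\dots\otimes U_1)(1)$; moreover the $\Sym_{2d}$-action on $V^{\otimes_\Lambda 2d}$ is generated by the operators $\tau$ acting on pairs of adjacent tensor factors, and $\tau=F(X)$ by \eqref{eq:F-generating}. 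Consequently $\varpi_{2d}(\sigma).c_0^{\otimes d}=F(D_\sigma)(1)$, where $D_\sigma\in B_0^{2d}(d)$ is the Brauer diagram obtained by composing the cup diagram $U^{\otimes d}$ with a product of crossings representing $\sigma$. Hence every element of $H_0^{2d}\cong (V^{\otimes_\Lambda 2d})^G$ lies in $F_0^{2d}(B_0^{2d}(d))$, and combined with the reduction of the first paragraph this gives surjectivity of $F_k^\ell$ for all $k,\ell$.

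The main obstacle is the identification in the second paragraph: making sure that after translating $\OSp$-invariants from $(V^*)^{\otimes r}$ to $V^{\otimes r}$ via the form, the resulting spanning set is genuinely realised as $F$ applied to a Brauer diagram, with all signs and twists accounted for. Once this is done, the rest of the argument is formal, using only Lemma \ref{lem:transfrom} and the definition of $F$ on the generating diagrams $I,X,U,A$.
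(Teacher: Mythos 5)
Your proposal is correct and is in essence the same argument as the paper's, merely run in the ``dual'' direction: the paper collapses everything to $H_{k+\ell}^0=\Hom_G(V^{\otimes(k+\ell)},\Lambda)\cong((V^*)^{\otimes (k+\ell)})^G$, where Corollary \ref{cor:fft-osp} applies directly and the identification of the spanning functionals $\gamma_{[\sigma]}$ with $F$-images of $(2r,0)$ Brauer diagrams is immediate, whereas you collapse to $H_0^{k+\ell}\cong(V^{\otimes(k+\ell)})^G$ and then transport across the isomorphism $\phi^{\otimes r}$ (valid because $\hat g=g$ for $g\in\OSp(V)$), which forces the extra translation step via $c_0=(\phi^{-1}\otimes\phi^{-1})(\hat c)$. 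Both routes rest on the same two pillars: Lemma \ref{lem:transfrom} for the category-theoretic reduction, and Corollary \ref{cor:fft-osp} as the FFT. One small correction: in your first paragraph the inverse to $F\mathbb{U}_0^k$ should be written $F\mathbb{A}^{\ell}_k: H_0^{k+\ell}\to H_k^\ell$ rather than $F\mathbb{A}^0_k$, matching the indices in Lemma \ref{lem:transfrom}.
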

\begin{proof}
Lemma \ref{lem:transfrom} gives a canonical
isomorphism $B_k^\ell(d)\simeq B_{k+\ell}^0(d)$, thus we only need to study
$F_{k+\ell}^0$. When $k+\ell$ is odd, the theorem is trivially true.
Thus we only need to consider the case $k+l=2r$.

By Corollary \ref{cor:fft-osp}, every element of $H_{2r}^0$ is a $\Lambda$-linear
combination of functionals $\gamma_\alpha$ for $[\alpha]\in\cT_r$. As remarked in
Remark \ref{rem:diagbij}, the elements of $\cT_r$ are in canonical bijection
with pairings of the set $\{1,2,\dots,2r\}$. Thus there is a
one to one correspondence between the elements of $\cT_r$ and $(2r, 0)$
Brauer diagrams. Let $D$ be the diagram corresponding to $[\alpha]\in\cT_r$.
Then $F(D)=\gamma_\alpha$.
Thus $F_{2r}^0$ is surjective, proving the theorem.
\end{proof}

\subsection{The Brauer algebra and endomorphism algebras}

For any object $r$ in $\cB(\delta)$,  the set of morphisms
$B_r^r(\delta)$ from $r$ to itself form
a unital associative superalgebra under composition
of Brauer diagrams. This is the Brauer superalgebra of degree
$r$ with parameter $\delta$, which will be denoted by $B_r(\delta)$.
If $\delta\in\C$, we let $B_r(\delta)_\C$ be the Brauer algebra
over $\C$, then $B_r(\delta)=B_r(\delta)_\C\otimes_\C\Lambda$,
where $B_r(\delta)_\C$ is regarded as purely even.

For $i=1,\dots,r-1$, let $s_i$ and $e_i$ respectively be the $(r, r)$ Brauer diagrams shown in
Figure \ref{s-e}.
\begin{figure}[h]
\begin{center}
\begin{picture}(350, 60)(0,0)
\put(0, 0){\line(0, 1){60}}
\put(40, 0){\line(0, 1){60}}
\put(18, 30){...}
\put(10, 0){$i-1$}

\qbezier(60, 0)(70, 30)(80, 60)
\qbezier(60, 60)(70, 30)(80, 0)

\put(100, 0){\line(0, 1){60}}
\put(140, 0){\line(0, 1){60}}
\put(118, 30){...}
\put(150, 0){, }

\put(200, 0){\line(0, 1){60}}
\put(240, 0){\line(0, 1){60}}
\put(218, 30){...}
\put(210, 0){$i-1$}

\qbezier(260, 0)(270, 45)(280, 0)
\qbezier(260, 60)(270, 15)(280, 60)

\put(300, 0){\line(0, 1){60}}
\put(340, 0){\line(0, 1){60}}
\put(318, 30){...}
\put(350, 0){ \  }
\end{picture}
\end{center}
\caption{ }
\label{s-e}
\end{figure}
Then $B_r(\delta)$ as superalgebra over $\Lambda$ is generated by $\{s_i, e_i \mid i=1, 2, \dots, r-1\}$
with the standard relations given in \cite[Lemma 2.18(1)]{LZ5}. We note in particular that
the elements $s_i$ generate the subalgebra $\Lambda\Sym_r\subset B_r(\delta)$.

The following result is an immediate corollary of Theorem \ref{thm:fft-cat}.
\begin{corollary} \label{cor:Brauer-super}
Let $d=m-2n$. Then
\[\End_{\OSp(V)}(V^{\otimes r}) = F_r^r(B_r(d))\]
for all $r$ as associative superalgebra.
\end{corollary}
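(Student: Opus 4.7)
The plan is to observe that this corollary is essentially an immediate consequence of Theorem \ref{thm:fft-cat}, but that a few pieces need to be assembled. First, I would unpack the notation: by the construction of the Brauer category $\cB(d)$, the set of endomorphisms $B_r^r(d) = \Hom_{\cB(d)}(r,r)$ is, by definition, exactly the Brauer superalgebra $B_r(d)$, with multiplication given by composition of $(r,r)$ Brauer diagrams. Under the functor $F$, this composition is sent to composition of $\Lambda$-linear $G$-equivariant endomorphisms of $V^{\otimes_\Lambda r}$, since $F$ is a covariant functor. Hence the induced map $F_r^r:B_r(d) \to \End_{\OSp(V)}(V^{\otimes_\Lambda r})$ is a homomorphism of associative $\Lambda$-superalgebras.

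Next, I would invoke Theorem \ref{thm:fft-cat} to conclude that $F$ is full, meaning every $\Lambda$-linear map $\Hom_{\cB(d)}(k,\ell) \to \Hom_G(V^{\otimes k}, V^{\otimes \ell})$ induced by $F$ is surjective. Specialising to $k=\ell=r$ yields the surjectivity of $F_r^r$, which gives the equality $\End_{\OSp(V)}(V^{\otimes_\Lambda r}) = F_r^r(B_r(d))$ as $\Lambda$-modules, and hence as associative superalgebras in view of the above remark.

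There is essentially no obstacle here since all the work has been done in the proof of Theorem \ref{thm:fft-cat} via Corollary \ref{cor:fft-osp} and the identification of Brauer diagrams with pairings recalled in Remark \ref{rem:diagbij}. The only point worth a sentence of commentary is to confirm that the associative algebra structure on $B_r^r(d)$ matches the composition multiplication on $\End_{\OSp(V)}(V^{\otimes_\Lambda r})$; this is just the functoriality of $F$ combined with the definition of the Brauer algebra as an endomorphism algebra in the Brauer category.
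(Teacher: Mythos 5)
Your proposal is correct and takes exactly the paper's approach: the paper presents this corollary as an immediate consequence of Theorem \ref{thm:fft-cat} with no separate proof, and your argument simply makes explicit the two ingredients that are implicit there, namely specialising the fullness of $F$ to $k=\ell=r$ and noting that $F_r^r$ is an algebra homomorphism by functoriality.
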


Recall from Remark \ref{rem:HC-pair} that the orthosymplectic supergroup $\OSp(V)$ may be defined
by a Harish-Chandra super pair \cite{DM}.
The following result follows from the corollary.
\begin{corollary}\label{cor:Brauer}
Let $G:=(G_0, \osp(V_\C))$, with $G_0={\rm O}((V_\C)_{\bar0})\times{\rm Sp}((V_\C)_{\bar1})$,
be  the Harish-Chandra super pair defining
the orthosymplectic supergroup $\OSp(V)$.
Denote by $\End_G(V_\C^{\otimes r})$ the subspace of $\End_\C(V_\C^{\otimes r})$
consisting of $G$-invariants, that is, elements which are
both $G_0$-invariant and $\osp(V_\C)$-invariant. Then
\[
\End_G(V_\C^{\otimes r}) = F_r^r(B_r(d)_\C),
\]
where $B_r(d)_\C$ is the complex Brauer algebra of degree $r$ with parameter $d=m-2n$.
\end{corollary}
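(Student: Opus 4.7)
The plan is to reduce Corollary \ref{cor:Brauer} to Corollary \ref{cor:Brauer-super} by descending coefficients from $\Lambda$ back to $\C$. Concretely, I will establish the two $\Lambda$-module identifications
\[
F_r^r(B_r(d)) = F_r^r(B_r(d)_\C)\otimes_\C\Lambda,\qquad
\End_{\OSp(V)}(V^{\otimes_\Lambda r}) = \End_G(V_\C^{\otimes r})\otimes_\C\Lambda
\]
inside $\End_\Lambda(V^{\otimes_\Lambda r})=\End_\C(V_\C^{\otimes r})\otimes_\C\Lambda$. Granted these, Corollary \ref{cor:Brauer-super} yields $\End_G(V_\C^{\otimes r})\otimes_\C\Lambda=F_r^r(B_r(d)_\C)\otimes_\C\Lambda$, and extracting $\C$-coefficients on both sides gives the desired equality.

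The first identification is immediate: $B_r(d)=B_r(d)_\C\otimes_\C\Lambda$ by definition, and the generating morphisms $F(X)=\tau$, $F(A)=\check{C}$, $F(U)=\hat{C}$ in \eqref{eq:F-generating} are all defined over $\C$, so $F$ restricts to a $\C$-linear map $B_r(d)_\C\to\End_\C(V_\C^{\otimes r})$ whose $\Lambda$-linear extension is $F_r^r$.

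The second identification is the heart of the argument. I would write any $\tilde\phi\in\End_\Lambda(V^{\otimes_\Lambda r})$ uniquely as $\tilde\phi=\sum_i\psi_i\otimes\mu_i$ with $\psi_i\in\End_\C(V_\C^{\otimes r})$ and $\C$-linearly independent homogeneous $\mu_i\in\Lambda$, and analyse invariance termwise. Since each $g\in G_0$ acts on $V$ as the $\Lambda$-linear extension of its action on $V_\C$, $\C$-linear independence of the $\mu_i$ reduces $G_0$-invariance of $\tilde\phi$ to $G_0$-invariance of each $\psi_i$. For $X=X_0\otimes\nu\in\osp(V)=(\osp(V_\C)\otimes\Lambda)_{\bar0}$, a direct sign computation, formally analogous to the one in the proof of Theorem \ref{thm:BR}, shows that $[X,\psi_i\otimes\mu_i]$ equals, up to an overall sign, $(X_0\psi_i-(-1)^{[X_0][\psi_i]}\psi_iX_0)\otimes\mu_i\nu$. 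The infinite-degree property of $\Lambda$ then permits one to vary $\nu$ so as to keep the products $\mu_i\nu$ $\C$-linearly independent; this forces $X_0\psi_i=(-1)^{[X_0][\psi_i]}\psi_iX_0$, i.e.\ each $\psi_i$ is super-$\osp(V_\C)$-invariant. Combined with the factorisation $\OSp(V)=G_0\cdot\Exp(\osp(V))$ (proved as in Lemma \ref{lem:exp}(2): given $g\in\OSp(V)$ with degree-$0$ component $g_0$, one checks $g_0\in G_0$, and then $g=g_0\exp(\log(g_0\inv g))$ with $\log(g_0\inv g)\in\osp(V)$ since $g_0\inv g$ is a unipotent element of $\OSp(V)$), these two invariance statements together give the second identification.

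The main obstacle is the careful sign bookkeeping in translating the commutator on the $\Lambda$-side to the super-commutator on the $\C$-side; once this is verified, the infinite-degree property of $\Lambda$ takes over and the remainder of the argument is formal, following the template of Theorem \ref{thm:BR}.
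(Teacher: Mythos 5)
Your proposal is correct and follows essentially the same route as the paper's own proof: decompose $g\in\OSp(V)$ as $g_0\exp(X)$ with $g_0\in G_0$ and $X$ a nilpotent element of $\osp(V)$, expand $\tilde\phi$ over a $\C$-basis of $\Lambda$, and use the infinite-degree property of $\Lambda$ (as in the proof of Theorem~\ref{thm:BR}) to reduce $\OSp(V)$-invariance to termwise invariance under $G_0$ and $\osp(V_\C)$, then pull back through Corollary~\ref{cor:Brauer-super}.
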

\begin{proof}
Let $E_i=\End_\C(V_\C)\otimes\Lambda_i$, were $\Lambda_i$ is the homogeneous component of $\Lambda$ of
degree $i$ relative to the $\Z_+$-grading. Set $E_+=\sum_{i\ge 1} E_i$. Then all elements of
$E_+$ are nilpotent endomorphisms on $V$. Let $\osp(V)_+=\osp(V)\cap E_+$. Recall the specialisation map
$\cR: \Lambda\longrightarrow\C$ defined by \eqref{eq:specialisation}. We have the following
surjective group homomorphism
$\hat\cR=\id_{\End_\C(V_\C)}\otimes\cR: G\longrightarrow G_0$.
Given $g\in\OSp(V)$, we denote $g_0=\hat\cR(g)$, then $\hat\cR(g_0^{-1}g)=1$. Thus there
exists some $X\in\osp(V)_+$ such that $g_0^{-1}g=\exp(X)$, that is, $g=g_0\exp(X)$.
In fact, $X=-\sum_{i\ge 1} (1-g_0^{-1}g)^i$,
which is a finite sum since $1-g_0^{-1}g$ is a nilpotent endomorphism on $V$.

An element $\phi\in \End_\Lambda(V^{\otimes r})$ is $G$-invariant, that is,
$g.\phi =g \phi g^{-1} =\phi$ for all $g=g_0\exp(X)\in G$, if and only if both of the following conditions are satisfied:
\[
\begin{aligned}
&(i) &  &g_0.\phi =\phi, \quad \text{for all $g_0\in G_0$,  and }\\
&(ii) & &\exp(X).\phi=\phi, \quad \text{for all $X\in \osp(V)_+$},
\end{aligned}
\]
where the second condition is equivalent to $(ii)'$ $X.\phi=0$ for all $X\in \osp(V)_+$.
Write $\phi=\sum_i \phi_i\otimes\lambda_i$,
where $\phi_i\in \End_\C(V_\C)$ and $\lambda_i\in\Lambda_i$ for all $i$.
Then condition $(i)$ is clearly equivalent to $\phi_i\in \End_{G_0}(V_\C^{\otimes r})$ for all $i$.
Since $\Lambda$ is of infinite degree, condition $(ii')$ is satisfied if and only if
$\phi_i\in \End_{\osp(V_\C)}(V_\C^{\otimes r})$ for all $i$. This completes the proof.
\end{proof}
\begin{remark}\label{rem:cel}
It is well known that the Brauer algebras are cellular \cite{GL96}, and in fact that
the Brauer category is a cellular category. This makes it possible to apply cellular theory to
the decomposition of the tensor powers $V^{\ot r}$
(cf. \cite{GL98,GL03,GL04}, where the possibility of different
ground rings is discussed).
\end{remark}
\section{Proof of the key lemma}\label{sect:pf-new}

In this section we shall prove the key Lemma \ref{lem:key}. We maintain the notation of Section \ref{sect:inv-osp},
in particular that of \S\ref{sect:osp-poly}.

\subsection{The key lemma in terms of super matrices} We begin by translating the statement into one
concerning the space
of super matrices
described in Lemma \ref{lem:superlin}. Recall (\S\ref{sect:orthosym}) that with respect to the standard
 basis of $V$, the matrix
of the orthosymplectic form $(-,-)$ on $V$ is
$
\eta=\bordermatrix{&m&2n&\cr
                m&I & 0\cr
                2n& 0 &  J\cr},
$
so that the transpose
$
\eta^t=\eta\inv=\bordermatrix{&m&2n&\cr
                m&I & 0\cr
                2n& 0 & - J\cr}
$.
Recall also (\S\ref{sect:osp-poly}) that if $e_1,\dots,e_{m+2n}$ is the standard ($\La$-)basis of $V$ and
$v=e_1\la_1+\dots+e_{m+2n}\la_{m+2n}\in V$, the coordinate vector
$M(v) =
\begin{pmatrix} \la_1\\ \la_2\\ \cdot\\ \cdot\\ \la_{m+2n}
\end{pmatrix}
$.
Similarly, for $\alpha\in\End_\La(V)$, we have the matrix $M(\alpha)\in\cM(m|2n;\La)$.

Define the {\it supertranspose} of this column vector by
$$
\begin{pmatrix} \la_1\\ \la_2\\ \cdot\\ \cdot\\ \la_{m+2n}
\end{pmatrix}\st=
((-1)^{[1][\lambda_1]} \la_1,(-1)^{[2][\lambda_2]} \la_2, \cdots, (-1)^{[m+2n][\lambda_{m+2n}]}\la_{m+2n}),
$$
where $[i]:=[e_i]$ ($={\bar0}$ if $1\leq i\leq m$, and $\bar1$ otherwise).
It is then evident that for $v,w\in V$,
\be\label{eq:matrixip}
(v,w)=M(v)\st\eta M(w).
\ee

Similarly, if $A=(a_{ij})\in\cM(m|2n;\La)$, define the supertranspose $A\st$ of $A=(a_{ij})$ by
\be\label{eq:st}
(A\st)_{ij}=(-1)^{[j][a_{ji}]}a_{ji},
\ee
and recall that under the isomorphism $\End_\La(V)\overset{\sim}{\lr}\cM(m|2n;\La)$,  $\bE=\End_\La(V)_{\bar0}$
is mapped to $\cM(m|2n;\La)_{\bar0}:=\{A=(a_{ij})\mid [a_{ij}]=[i]+[j]\}$. It follows that for
$A\in\cM(m|2n;\La)_{\bar0}$, $(A\st)_{ij}=(-1)^{[j]([i]+[j])}a_{ji}$.

The following relations are now easily verified.

For $A\in\cM(m|2n;\La)_{\bar0}$ and $B\in\cM(m|2n;\La)$, we have
\be\label{eq:compst}
(AB)\st=B\st A\st.
\ee
Let $\alpha\in \bE$, and let $A=M(\alpha)$. Then
\be\label{eq:dagmat}
M(\alpha^\dag):=A^\dag=\eta\inv A\st \eta.
\ee
This leads to the following characterisation of the space of self adjoint even supermatrices.
\begin{lemma}\label{lem:char-sa}
Let $A=\bordermatrix{&m&2n&\cr
                m&A_{11} & A_{12}\cr
                2n& A_{21} &  A_{22}\cr}\in\cM(m|2n;\La)_{\bar0}$. Then $A^\dag=A$ if and only if the matrix
$B=\eta A=\bordermatrix{&m&2n&\cr
                m&B_{11} & B_{12}\cr
                2n& B_{21} &  B_{22}\cr}
=\bordermatrix{&m&2n&\cr
                m&A_{11} & A_{12}\cr
                2n& JA_{21} &  JA_{22}\cr}
$ satisfies
\begin{enumerate}
\item $B_{11}^t=B_{11}$,

\item $B_{22}^t=-B_{22}$, and

\item $B_{21}=B_{12}^t$.
\end{enumerate}
\end{lemma}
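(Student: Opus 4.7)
The plan is to reduce the self-adjointness condition $A^\dag=A$ to a matrix identity and then compare blocks. By \eqref{eq:dagmat} we have $A^\dag = \eta\inv A\st\eta$, so $A^\dag=A$ is equivalent to $A\st\eta = \eta A = B$. Thus the whole lemma reduces to computing $A\st\eta$ in block form and matching it against $B$ block by block.

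First I would compute $A\st$ in block form using the definition \eqref{eq:st}, $(A\st)_{ij}=(-1)^{[j]([i]+[j])}a_{ji}$, together with the fact that $[i]=\bar 0$ for $i\leq m$ and $[i]=\bar 1$ for $i>m$. A short case analysis on the four possible parity combinations for $(i,j)$ gives
$$
A\st=\begin{pmatrix} A_{11}^t & -A_{21}^t \\ A_{12}^t & A_{22}^t\end{pmatrix}.
$$
Multiplying on the right by $\eta=\mathrm{diag}(I_m,J)$ yields
$$
A\st\eta=\begin{pmatrix} A_{11}^t & -A_{21}^t J \\ A_{12}^t & A_{22}^t J\end{pmatrix},
\qquad
B=\eta A=\begin{pmatrix} A_{11} & A_{12} \\ JA_{21} & JA_{22}\end{pmatrix}.
$$

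Next I would equate the four blocks of $A\st\eta$ and $B$. The $(1,1)$-block gives $A_{11}^t=A_{11}$, which, since $B_{11}=A_{11}$, is precisely condition (1). The $(2,1)$-block gives $A_{12}^t = JA_{21}=B_{21}$, which together with $B_{12}=A_{12}$ is condition (3); I would then observe that the $(1,2)$-block equation $-A_{21}^tJ=A_{12}$ is nothing but the transpose of the $(2,1)$-block equation (using $J^t=-J$), so it is redundant. Finally, the $(2,2)$-block gives $A_{22}^t J = JA_{22}$; taking transpose and using $J^t=-J$ transforms this into $(JA_{22})^t=-JA_{22}$, i.e.\ $B_{22}^t=-B_{22}$, which is condition (2).

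The entire argument is bookkeeping; the only subtle point is that one must keep track of the sign $(-1)^{[j]([i]+[j])}$ in the supertranspose correctly across the four blocks, and remember to use $J^t=-J$ when comparing the off-diagonal identities and when converting between $A_{22}^tJ=JA_{22}$ and the skew-symmetry of $B_{22}=JA_{22}$. There is no genuine obstacle, and the conclusion is an ``if and only if'' since every step above is reversible.
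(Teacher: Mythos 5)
Your proof is correct and follows essentially the same route as the paper: compute $A^{\rm st}$ blockwise from the supertranspose formula, rewrite $A^\dag=A$ as $A^{\rm st}\eta=\eta A$, and compare the four blocks (noting the redundancy of the $(1,2)$-block equation and using $J^t=-J$ for the $(2,2)$ block). The paper additionally writes out $A^\dag$ itself in block form as an intermediate step, but the substance of the argument is identical.
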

\begin{proof}
Write $\cM=\cM(m|2n;\La)$, so that $\cM_{\bar0}\simeq \bE$; let $A=\bordermatrix{&m&2n&\cr
                m&A_{11} & A_{12}\cr
                2n& A_{21} &  A_{22}\cr}\in\cM_{\bar0}$.
Then $A\st=\bordermatrix{&m&2n&\cr
                m&A_{11}^t & -A_{21}^t\cr
                2n& A_{12}^t &  A_{22}^t\cr}$
and
$A^\dag=\bordermatrix{&m&2n&\cr
                m&A_{11}^t & -A_{21}^tJ\cr
                2n& JA_{12}^t &  -JA_{22}^tJ\cr}$.
The condition $A^\dag=A$ may be written $A\st\eta=\eta A$, and an easy computation shows that
this condition translates into the stated conditions on the four submatrices of $B=\eta A$.
\end{proof}

Evidently $\End_\La(V)^+=\{T\in\End_\La(V)\mid T^\dag=T\}\cong \{A\in\cM\mid A^\dag=A\}$.
 We therefore write $\bE^+=\{A\in\cM_{\bar0}\mid A^\dag=A\}$.
The conditions that $A^\dag=A\in\bE^+$ may be rephrased as follows.
\be\label{eq:sa}
A_{11}^t=A_{11};\;\;A_{22}^t=-JA_{22}J;\;\;\text{ and }A_{21}=-JA_{12}^t.
\ee
We then have, for $A\in \bE$,
\be\label{eq:omega}
\omega(A)=A^\dag A\in\bE^+.
\ee

If $A=\bordermatrix{&m&2n&\cr
                m&A_{11} & A_{12}\cr
                2n& A_{21} &  A_{22}\cr}\in\cM_{\bar0}\simeq\bE,$ then $S=\omega(A)
=\bordermatrix{&m&2n&\cr
                m&S_{11} & S_{12}\cr
                2n& S_{21} &  S_{22}\cr}$, where
\be\label{eq:ada}
\begin{aligned}
S_{11}=A_{11}^tA_{11}+A_{12}^tJ^tA_{22}; \;\;\;& S_{12}=A_{11}^tA_{12}+A_{21}^tJ^tA_{22};\\
S_{21}=J^tA_{12}^tA_{11}+J^tA_{22}^tJA_{21};\;\;\; &
S_{22}=J^tA_{12}^tA_{12}+J^tA_{22}^tJA_{22}.\\
\end{aligned}
\ee

Now let $\bS$ be the space of matrices $B$ satisfying the conditions of Lemma \ref{lem:char-sa}.
Evidently the map $s:A\mapsto \eta A$ defines an isomorphism $s:\bE^+\lr \bS$.

\begin{definition}\label{def:omega0} We define $\omega_s:\bE\lr \bS$ by $\omega_s=s\circ\omega$.

\end{definition}

It is clear that Lemma \ref{lem:key} is equivalent to the following statement.

\begin{lemma}
If $f\in\cP[\bE]$ is such that $f(gA)=f(A)$ for $g\in\OSp(V)$ and $A\in\bE$, then there exists
$F\in\cP[\bS]$ such that $f=\omega_s^*(F)=F\circ\omega_s$.
\end{lemma}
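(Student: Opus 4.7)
The plan is to follow the Atiyah--Bott--Patodi strategy adapted to the super setting: construct a super-Cholesky factorisation $A = gR$, with $g \in \OSp(V)$ and $R$ in a prescribed super-triangular form on a Zariski-dense subset of $\bE$; then use $\OSp(V)$-invariance of $f$ to rewrite $f(A) = f(R)$; and finally show that $R$ depends polynomially on $\omega_s(A) = \eta R^\dag R$, so that $F(S) := f(R(S))$ is well-defined and polynomial.

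For the factorisation, given $A \in \bE$ whose body (the image under entry-wise augmentation $\cR$ of \eqref{eq:specialisation}) has invertible leading principal minors in the appropriate even-even and odd-odd blocks, I would interpret the columns of $A$ as vectors in $V$ and apply the super Gram--Schmidt of Lemma \ref{lem:bs-constr} iteratively, extracting at each step an orthosymplectic basis vector. Lemma \ref{lem:bs-change} then supplies $g \in \OSp(V)$ such that $g^{-1} A = R$ has the prescribed super-triangular shape. The explicit recursions \eqref{eq:iterate} and \eqref{eq:T} in the proof of Lemma \ref{lem:bs-constr} show that both $g$ and $R$ depend polynomially on the entries of $A$ once the appropriate pivots are inverted in $\Lambda_{\bar 0}$.

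Because $\omega_s(A) = \eta R^\dag R$ with $R$ super-triangular, a Cholesky-type recursion then recovers the entries of $R$ as polynomial functions in the entries of $\omega_s(A)$, localised at the same pivots. I would set $F(S) := f(R(S))$ on the corresponding Zariski-open subset of $\bS$; by $\OSp(V)$-invariance of $f$, one has $F(\omega_s(A)) = f(A)$ on a Zariski-dense subset of $\bE$. The density principle recorded in Remark \ref{rem:density}, together with the fact that $f \in \cP[\bE]$ is polynomial, then forces $F$ to extend to a polynomial function on all of $\bS$, and forces the identity $f = F \circ \omega_s$ to hold everywhere on $\bE$.

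The main obstacle is arranging polynomial, rather than merely rational, dependence of $F$ on $\omega_s(A)$. Classically one inverts strictly positive real pivots, but here $\Lambda_{\bar 0}$ is not a field; the correct substitute is invertibility of the body of a pivot (which defines a Zariski-open condition), and one must verify that after clearing denominators no spurious poles remain. The infinite-degree nature of $\Lambda$ is crucial here: any particular $A$ lives in $\End_\C(V_\C) \otimes \Lambda(N)$ for some finite $N$, so the geometric construction takes place inside a finite-dimensional $\Lambda(N)_{\bar 0}$-affine space, where one has access to classical invariant theory for $\Or_m \times \Sp_{2n}$ over the local ring $\Lambda(N)_{\bar 0}$. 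Assembling these level-wise polynomial data into a single polynomial $F$ on $\bS$, uniformly in $N$, is the technical heart of the argument; this is precisely the place where the gap in the earlier version, alluded to in the acknowledgement, presumably needed to be repaired.
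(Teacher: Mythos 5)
Your overall strategy---factor $A=gR$ by super Gram--Schmidt (Lemmas \ref{lem:bs-constr} and \ref{lem:bs-change}), use $\OSp(V)$-invariance to replace $f(A)$ by $f(R)$, recover $R$ from $\omega_s(A)$, and propagate by density---is indeed the same general Atiyah--Bott--Patodi template the paper follows. But the crucial step you flag as the ``technical heart'' is where your argument actually breaks down, and the density appeal you make there does not close the gap.

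After the Gram--Schmidt/Cholesky step you obtain, on a Zariski-open set, an identity of the form $f(A)=p(\omega_s(A))^{-r}F(\omega_s(A))$ where $p$ is a pivot (a diagonal or antidiagonal entry of $\omega_s(A)$) and $F\in\cP[\bS]$. The density principle of Remark \ref{rem:density} tells you that two \emph{polynomial} functions agreeing on a dense set agree everywhere; it does \emph{not} tell you that a rational function $p^{-r}F$ which happens to pull back under $\omega_s$ to a polynomial must itself be a polynomial on $\bS$. That implication requires knowing that $p^r$ divides $F$ in $\cP[\bS]$, and nothing in the Cholesky recursion gives you that directly, because $\omega_s^*$ being injective does not transport divisibility from $\cP[\bE]$ back to $\cP[\bS]$. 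The fact that $f$ is polynomial only tells you $\omega_s^*(p)^r$ divides $\omega_s^*(F)$ in $\cP[\bE]$, which is weaker.

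The paper's resolution is an extra algebraic device you have omitted: it runs the reduction \emph{twice}, with two distinct pivot positions. Lemma \ref{lem:ind} produces, on overlapping dense subsets containing the set $U$ of Lemma \ref{lem:uwdense}, two expressions $f = u_{ij}^{-r_1}\,F_1\circ\omega_s = u_{kl}^{-r_2}\,F_2\circ\omega_s$ with $(i,j)\neq(k,l)$ distinct even coordinate positions on $\bS$. Equating and clearing denominators yields the polynomial identity $u_{ij}^{r_1}F_2 = u_{kl}^{r_2}F_1$ on $\omega_s(\bE)$; by Lemma \ref{lem:omegadense} this image is Zariski-dense in $\bS$, so the identity holds in $\cP[\bS]$. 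Since $u_{ij}$ and $u_{kl}$ are algebraically independent even coordinate functions, $u_{ij}^{r_1}$ must divide $F_1$, which is exactly what makes $u_{ij}^{-r_1}F_1$ a genuine polynomial on $\bS$. This double-pivot comparison is the step your proposal needs and does not have; without it, the argument proves only that $f$ is a \emph{rational} function of $\omega_s(A)$, localised at a pivot.

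A secondary but nontrivial point: your proposal does not make precise what ``super-triangular $R$'' means, nor address the fact that the even and odd blocks must be pivoted differently (the even block by a single orthonormal vector at a time, the odd block by a hyperbolic pair as in Lemma \ref{lem:bs-constr}(2)); the paper's induction on $\sdim V_\C$, decreasing by $(1|0)$ or $(0|2)$, is precisely how it organises this. Also note that the base cases (Lemma \ref{lem:baby}) require their own small argument and are not automatic from the classical theory, since one must invert augmentation-units in $\La_{\bar0}$ rather than nonzero scalars.
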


\subsection{Further reformulation of the Key Lemma-notation}  Let $\phi_{ij}\in \End_\La(V)^*$
be the $i,j$ coordinate function on $\bE$. Then $[\phi_{ij}]=[i]+[j]$, and the super-matrix $A\in\bE$ if and only if
$[a_{ij}]=[i]+[j]$ for all $i,j$.

Let $c_j$ be the $j^{\text{th}}$ column of $A\in\bE$, and let $v_j\in V=V_\C\ot_\C\La$ be the element
with coordinate vector
$c_j$. It is then clear that
\[
\begin{cases}
v_j\in V_{\bar0}\text{ if }1\leq j \leq m\\
v_j\in V_{\bar1}\text{ if }m+1\leq j\leq m+2n.\\
\end{cases}
\]
We therefore may, and shall, denote elements $A\in\bE$ by $A=[v_1, v_2,\cdots, v_{m+2n}]$ with the above
convention. It is then easily seen that
\be\label{eq:omegas}
\omega_s(A)=\big( (v_i,v_j)\big).
\ee

With this notation, the key lemma may now be stated as follows.
\begin{lemma}\label{lem:key3}
Let $f\in\cP[\bE]$ be such that $f([gv_1,\cdots,gv_{m+2n}])=f([v_1,\cdots,v_{m+2n}])$ for
all $g\in\OSp(V)$ and $[v_1,\cdots,v_{m+2n}]\in\bE$.
Then $f$ is a polynomial in the variables $(v_i,v_j)$.
\end{lemma}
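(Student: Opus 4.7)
I will adapt the Atiyah--Bott--Patodi argument to the super setting, isolating a Zariski dense open locus $\cU \subset \bE$ on which the $\OSp(V)$-orbits coincide exactly with the fibres of $\omega_s\colon A \mapsto \bigl((v_i,v_j)\bigr)$, constructing a section of $\omega_s$ there via super Gram--Schmidt, and extending the resulting identity by density.

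First I would take $\cU := \bE \cap \GL(V)$, the locus of invertible even endomorphisms. This is Zariski dense in $\bE$ in the sense of Remark \ref{rem:density}: invertibility depends only on the classical invertibility of the degree-zero part, which is a non-empty Zariski open condition. For $A,A' \in \cU$ with $\omega(A) = \omega(A')$, the element $g := A' A\inv \in \GL(V)$ satisfies, using \eqref{eq:involution},
\[
g^\dag g \;=\; (A\inv)^\dag (A')^\dag A' A\inv \;=\; (A\inv)^\dag A^\dag A A\inv \;=\; (AA\inv)^\dag A A\inv \;=\; \id_V,
\]
so $g \in \OSp(V)$ and $A' = gA$. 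Therefore the $\OSp(V)$-orbits in $\cU$ are exactly the fibres of $\omega_s|_\cU$, and any $\OSp(V)$-invariant function on $\cU$ factors set-theoretically through $\omega_s|_\cU$.

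Next I would produce an explicit section $\sigma$ of $\omega_s$ on a dense open subset of $\bE^+$ by a super Gram--Schmidt construction that directly generalises Lemma \ref{lem:bs-constr}. Given a self-adjoint $S \in \bE^+$ whose leading degree-zero principal minors are invertible, I build columns $w_1, w_2, \ldots, w_{m+2n}$ recursively with $(w_i, w_j) = S_{ij}$: the first even column $w_1$ is determined by $(w_1,w_1) = S_{11}$ via a Grassmann-degree recursion mirroring \eqref{eq:orthosym}--\eqref{eq:T}; each subsequent even $w_k$ is chosen to match the prescribed inner products $(w_i,w_k) = S_{ik}$, $i < k$; and the passage to odd columns invokes part (2) of Lemma \ref{lem:bs-constr}. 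Setting $\sigma(S) := [w_1, \ldots, w_{m+2n}]$, each entry of $\sigma(S)$ is polynomial in those of $S$ and in the inverses of the chosen minors. Hence $\tilde\Phi := f \circ \sigma$ is a $\La$-rational function on the dense open locus of $\bE^+$ where the recursion runs, and by $\OSp(V)$-invariance and orbit separation $f(A) = \tilde\Phi(\omega_s(A))$ for $A$ in the dense open $\cU_0 \subset \cU$ where $\omega_s(A)$ lies in that locus.

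The principal obstacle is to verify that $\tilde\Phi$ is actually polynomial, not merely rational. Writing $\tilde\Phi = P/Q$ with $P, Q \in \cP[\bE^+]$, the identity $(Q \circ \omega_s)\cdot f = P \circ \omega_s$ valid on $\cU_0$ extends to all of $\bE$ by Zariski density; one must then deduce that $P$ is divisible by $Q$ in the graded-commutative algebra $\cP[\bE^+]$, a non-trivial step because $\cP[\bE^+]$ is not an integral domain. To handle this I would truncate to the finite-degree Grassmann algebra $\La(N)$ for large $N$: the resulting versions of $\bE$ and $\bE^+$ become finite-dimensional complex affine varieties, $\omega_s$ becomes a dominant morphism between them, and classical commutative algebra (applied to the reduced coordinate rings, together with normality of the target) upgrades rational regularity to polynomial regularity. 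Taking the direct limit as $N \to \infty$, exploiting the infinite-degree nature of $\La$ emphasised throughout the paper, then yields a polynomial $\Phi \in \cP[\bE^+]$ with $f(A) = \Phi(\omega_s(A))$ for all $A \in \bE$, as desired.
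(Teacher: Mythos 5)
Your first two steps are sound and in the spirit of the paper: the orbit-separation computation ($g=A'A^{-1}\in\OSp(V)$ when $\omega(A)=\omega(A')$ with $A,A'$ invertible) is correct, and using a super Gram--Schmidt construction to produce a rational section $\sigma$ of $\omega_s$ on a dense open of $\bE^+$ is the right idea (the paper does this one column, or one symplectic pair, at a time in Lemma~\ref{lem:ind}, rather than monolithically, but the underlying mechanism is the same). The genuine gap is the final step, where you must pass from the rational identity $f=(P/Q)\circ\omega_s$ to a polynomial one. The appeal to ``classical commutative algebra together with normality of the target'' does not work here: the complement of $\im(\omega_s)$ in $\bS$ is the degenerate locus, which is a hypersurface (codimension one), so the standard ``regular in codimension $\ge 2$ on a normal variety implies regular everywhere'' principle is not available. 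Passing to reduced coordinate rings is also not a legitimate move, since the whole problem lives in the nilpotent (Grassmann) part of $\La$, which is discarded by reduction; and there is no evident reason that the pullback $\omega_s^*:\cP[\bS]\to\cP[\bE]$ reflects divisibility.

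What the paper does instead, and what your argument is missing, is a cancellation trick: it produces \emph{two} rational expressions for $f$ with \emph{coprime} denominators, coming from two different Gram--Schmidt pivot choices. Concretely, Lemma~\ref{lem:ind} yields $f=u_{11}^{-r_1}F_1\circ\omega_s$ on one dense open (pivot on an even column) and $f=u_{m+1,m+2}^{-r}F\circ\omega_s$ on another (pivot on an odd symplectic pair). Combining these on the intersection, and extending to $\bE$ by Zariski density, gives $u_{11}^{r_1}F\circ\omega_s=u_{m+1,m+2}^{r}F_1\circ\omega_s$; density of $\omega_s(\bE)$ in $\bS$ (Lemma~\ref{lem:omegadense}) then upgrades this to the identity $u_{11}^{r_1}F=u_{m+1,m+2}^{r}F_1$ in $\cP[\bS]$. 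Since $u_{11}$ and $u_{m+1,m+2}$ are distinct even coordinate variables, hence non-zero-divisors with no common factor in the mixed polynomial--Grassmann ring $\cP[\bS]$, one concludes $u_{11}^{r_1}\mid F_1$, which clears the denominator. Without this second expression and the algebraic-independence/coprimality input, the rational function $\tilde\Phi$ cannot be shown to be polynomial, and your proof does not go through as written.
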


We shall denote the coordinate functions on $\bS$ by $u_{ij}$. Then $[u_{ij}]=[i]+[j]$,
and if $A=[v_1,\cdots,v_{m+2n}]\in\bE$, then \eqref{eq:omegas} says that $u_{ij}(\omega_s(A))=(v_i,v_j)$. Note that
by Lemma \ref{lem:char-sa},
if either $i\leq m$ or $j\leq m$, then $u_{ij}=u_{ji}$, while if $m+1\leq i,j\leq m+2n$, then
$u_{ij}=-u_{ji}$.

\subsection{Preliminary results for the proof of Lemma \ref{lem:key3}} The proof, which will be along the lines
of the proof of \cite[Proposition 5.2.6]{GW} (which itself is an adaptation of the approach of \cite[Appendix]{ABP}),
is by induction on $\sdim(V_\C)$.
We begin with some preliminary results.

\begin{lemma}\label{lem:omegadense}
The image of $\omega:\bE\lr\bE^+$ is dense in $\bE^+$.
\end{lemma}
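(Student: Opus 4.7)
The plan is to prove the lemma in the sense of Zariski density: by Remark \ref{rem:density} together with Proposition \ref{prop:poly-polyfn}, it suffices to show that any polynomial function $f\in \cP[\bE^+]$ satisfying $f\circ\omega=0$ must vanish identically. The strategy is an adaptation of the classical differential argument for the orthogonal/symplectic cases to the super setting, analysing the local behaviour of $\omega$ at the identity.

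The key computation is: for $Y\in\bE$ and $t\in\C$, one has $\id_V+tY\in\bE$, and since $\id_V^\dag=\id_V$,
$$
\omega(\id_V+tY) \;=\; (\id_V+tY^\dag)(\id_V+tY) \;=\; \id_V+tW_1(Y)+t^2 W_2(Y),
$$
where $W_1(Y):=Y+Y^\dag$ and $W_2(Y):=Y^\dag Y$ both lie in $\bE^+$ (the first by involutivity of $\dag$, the second by \eqref{eq:involution} since $[Y]=\bar0$). The crucial observation is that $W_1:\bE\to\bE^+$ is surjective: given $S\in\bE^+$, the element $Y:=S/2\in\bE$ satisfies $Y^\dag=Y$, hence $W_1(Y)=S$. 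Equivalently, under the decomposition $\bE=\bE^+\oplus\bE^-$ of \eqref{eq:decomp}, $W_1$ is twice the projection onto $\bE^+$.

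Given $f\in\cP[\bE^+]$ with $f\circ\omega=0$, I would expand $f$ in a Taylor series around $\id_V$, writing $f(\id_V+Z)=\sum_{j\geq 0} g_j(Z)$ with $g_j\in\cP^j[\bE^+]$, so that the hypothesis becomes the polynomial identity
$$
\sum_{j\geq 0} g_j\bigl(tW_1(Y)+t^2 W_2(Y)\bigr) \;=\; 0 \qquad \text{for all } Y\in\bE,\ t\in\C.
$$
Since $g_j$ is homogeneous of degree $j$, the expansion of $g_j(tW_1+t^2W_2)$ in $t$ begins at $t^j$ with $t^j$-coefficient equal to $g_j(W_1)$; in particular, higher $g_i$ (with $i>j$) make no contribution to the $t^j$-coefficient. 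Proceeding by induction on $j$, the vanishing of the $t^j$-coefficient of the above identity, under the inductive hypothesis $g_0=\cdots=g_{j-1}=0$, reduces to $g_j(Y+Y^\dag)=0$ for all $Y\in\bE$. By surjectivity of $W_1$, this forces $g_j\equiv 0$ on $\bE^+$; Proposition \ref{prop:poly-polyfn} then yields $g_j=0$ as a polynomial. Hence all $g_j$ vanish and $f=0$.

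The main technical subtlety lies in justifying that a polynomial identity in the scalar $t\in\C$, whose coefficients are $\Lambda$-valued polynomial functions of $Y\in\bE$, forces every such coefficient (as a function of $Y$) to vanish. This is handled by noting that for each fixed $Y$ the identity is an ordinary polynomial in $t$ over $\Lambda$ vanishing on all of $\C$, so its $\Lambda$-valued coefficients are zero for every $Y$; since the coefficients are themselves polynomial functions of $Y$ which vanish pointwise, they vanish as polynomial functions, and a further appeal to Proposition \ref{prop:poly-polyfn} delivers the required polynomial identity. Otherwise, the super nature of the problem is absorbed entirely into the polynomial framework provided by $\cP[\cdot]$, and the argument proceeds exactly as in the classical orthogonal/symplectic setting.
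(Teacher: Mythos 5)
Your proof is correct, and it is a genuinely different argument from both of the paper's proofs. The paper offers two: a dimension count over superschemes (the generic fibre of $\omega$ is $\OSp(V)$, whose dimension equals $\dim\bE^-$ via the Cayley transform, whence $\dim\omega(\bE)=\dim\bE-\dim\OSp(V)=\dim\bE^+$), and an explicit construction showing every nondegenerate $S\in\bE^+$ has a preimage (a classical Gram--Schmidt step reduces to $S_1=\id+\Psi$ with $\Psi$ nilpotent, and a terminating binomial series produces a self-adjoint square root $A_1$ with $S_1=A_1^\dag A_1$). Your approach is instead the algebraic form of the submersion argument: the differential of $\omega$ at $\id_V$ is $Y\mapsto Y+Y^\dag$, which surjects onto $\bE^+$ (since any $S\in\bE^+$ is $W_1(S/2)$), and you upgrade this to Zariski density by expanding $\omega(\id_V+tY)$ as a polynomial in $t\in\C$ and killing the coefficients $g_j(W_1(Y))$ by induction, finishing with the injectivity of $F^\bullet$ from Proposition~\ref{prop:poly-polyfn}. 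This avoids both the superscheme dimension machinery and the explicit square-root construction, and isolates exactly what is needed; it is also closer to the classical Atiyah--Bott--Patodi viewpoint. One remark for emphasis: the induction hypothesis $g_0=\cdots=g_{j-1}=0$ does real work, since for $i<j$ the expansion of $g_i(tW_1+t^2W_2)$ contributes to the coefficients of $t^i,\dots,t^{2i}$, not only $t^i$; it is only after those lower pieces are killed that the $t^j$-coefficient reduces to $g_j(W_1(Y))$. Your write-up handles this correctly.
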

We shall give two proofs, the second being the more concrete.
\begin{proof}[First proof.]
We show that $\omega(\bE)$ and $\bE^+$ are superschemes of the same dimension. To see this, note first that
the generic fibre of $\omega$ is $G=\OSp(V)$, since over any invertible element $S\in\bE^+$, $\omega\inv(S)
\cong G$. It follows that $\dim \omega(\bE)=\dim(\bE)-\dim(G)$. Further, we have seen that
$\End(V)=\End(V)^+\oplus\End(V)^-$, where $\End(V)^{\pm}=\{T\in\End(V)\mid T^\dag=\pm T\}$.

But the (Cayley) map $C:g\mapsto (1-g)\inv(1+g)$ maps $G^0=\OSp(V)^0$ birationally into $\bE^-$, where
$G^0$ is the connected component of the identity in $G$. It follows that
$\dim(G)=\dim(G^0)=\dim(\bE^-)=\dim(\bE)-\dim(\bE^+)$, whence
$\dim(\bE^+ )=\dim(\bE)-\dim(G)=\dim(\omega(\bE))$.
\end{proof}

\begin{proof}[Second proof.]
Identify  $\bE$ with $\cM_{\bar0}$. Then
$\bE^+=\{A\in \cM_{\bar0}\mid A^\dag = A\}$ and $\bS$ is the set of $B\in\cM_{\bar0}$ which satisfy the
conditions of Lemma \ref{lem:char-sa}. Recall that $\bS=\eta\bE^+$.
Denote by $\bE^+_{\text{reg}}$ the subset of $\bE^+$  consisting of its nondegenerate elements, that is,
$
\bE^+_{\text{reg}}=\{ S\in\bE^+\mid \text{$\det\cR(S)\ne 0$} \}.
$
The argument of Proposition \ref{prop:poly-polyfn} shows that $\bE^+_{\text{reg}}$ is dense in $\bE^+$.

For any $S\in\bE^+_{\text{reg}}$, we have $\cR(S)=\begin{pmatrix}S_{00} & 0\\ 0 & S_{11}\end{pmatrix}$,
where $S_{00}$ and $S_{11}$ are invertible complex matrices of size $m$ and $2n$ respectively.
A well known Gram-Schmidt orthogonalisation argument over $\C$ shows
that there exists an (evidently invertible) complex matrix
$A_0=\begin{pmatrix}A_{00} & 0\\ 0 & A_{11}\end{pmatrix}$
such that
$
\cR(S) = A_0^\dag A_0.
$

Write $S_1= (A_0^\dag)^{-1} S A_0^{-1}$. Then $\cR(S_1)$ is
the identity matrix $I$, and the entries of
$
\Psi=S_1-I
$
all belong to the augmentation ideal  $\Lambda_{\bar 1}\Lambda$ of $\Lambda$. Hence $\Psi$ is nilpotent,
and the binomial expansion
\[
 (I + \Psi)^{1/2}:=I +\sum_{k=1}^\infty\binom{ \frac{1}{2}}{k} \Psi^k
\]
is finite.
Since $S_1\in \bE^+$, $\Psi\in\bE^+$. Thus
$A_1:=(I + \Psi)^{1/2}\in\bE^+$ and $S_1=A_1^2=A_1^\dag A_1$.  Now
$
S=A_0^\dag S_1 A_0 = A_0^\dag A_1^\dag A_1 A_0 = \omega(A_1A_0).
$
Therefore,  $\im(\omega)\supset\bE^+_{\text{reg}}$. This proves the density of $\im(\omega)$ in $\bE^+$.
\end{proof}


Next we prove another density result.
\begin{lemma}\label{lem:uwdense} Let $\La^*$ be the group of invertible elements of $\La$.
Then the set
$$
\begin{aligned}
U:=\{[v_1,\cdots,v_{m+2n}]\in\bE\mid (v_i,v_i)\in&\La^*\text{ for }1\leq i\leq m\text{ and }\\
(v_{m+2j-1},v_{m+2j})\in&\La^*\text{ for }1\leq j\leq n\}\\
\end{aligned}
$$
is dense in $\bE$.

\end{lemma}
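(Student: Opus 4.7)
The claim is a Zariski density statement: I must show that any $f \in \cP[\bE]$ vanishing on $U$ vanishes identically. The plan is to reduce this to a purely classical (complex) Zariski density assertion via the density criterion of Remark \ref{rem:density}, after computing the scalar parts of the defining conditions.

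\medskip\noindent\textbf{Step 1: Explicit scalar parts.} Identifying $\bE \cong \cM(m|2n;\La)_{\bar 0}$, write $A = (a_{ij})$ with $[a_{ij}] = [i] + [j]$, and let $v_i$ denote the $i$th column (as an element of $V$). Using $(v,w) = M(v)\st \eta M(w)$ from \eqref{eq:matrixip}, the skew-symmetry of $J$, and the fact that products of two odd Grassmann coordinates have no scalar part, I compute
\[
\cR\bigl((v_i,v_i)\bigr) = \sum_{k=1}^{m} \cR(a_{ki})^2 \qquad (1 \le i \le m),
\]
\[
\cR\bigl((v_{m+2j-1}, v_{m+2j})\bigr) = \sum_{k,l=m+1}^{m+2n} J_{kl}\,\cR(a_{k,m+2j-1})\,\cR(a_{l,m+2j}) \qquad (1 \le j \le n).
\]
Since an element of $\La$ is invertible iff its scalar part is nonzero, the condition $A \in U$ becomes a system of non-vanishing conditions on certain polynomials in the scalar parts $\cR(a_{ki})$ with $[k] = [i]$.

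\medskip\noindent\textbf{Step 2: Reduction to a dense slice.} Choose a fixed family of odd Grassmann elements $(c_{ij})_{[i]\neq [j]}$ in $\La_{\bar 1}$ (for instance, distinct degree-one generators, using the infinite dimensionality of $\La$), and consider the slice
\[
D := \{A \in \bE \mid a_{ij} \in \C \text{ for } [i]=[j], \; a_{ij} = c_{ij} \text{ for } [i] \neq [j]\}.
\]
By Remark \ref{rem:density}, $D$ is Zariski dense in $\bE$: any polynomial function on $\bE$ vanishing on $D$ vanishes identically.

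\medskip\noindent\textbf{Step 3: Density of $U \cap D$ in $D$.} Identify $D$ with the complex affine space with coordinates $\{a_{ki} : [k]=[i]\}$. Step 1 shows that $U \cap D$ is the complement in $D$ of the zero locus of the polynomial
\[
P \;=\; \prod_{i=1}^{m}\Big(\sum_{k=1}^{m} a_{ki}^{\,2}\Big) \cdot \prod_{j=1}^{n}\Big(\sum_{k,l=m+1}^{m+2n} J_{kl}\, a_{k,m+2j-1}\, a_{l,m+2j}\Big),
\]
which is a nonzero complex polynomial (each factor is, being for instance nonzero at the identity-like configuration). Hence $U \cap D$ is a nonempty Zariski-open, and thus Zariski-dense, subset of the affine space $D$.

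\medskip\noindent\textbf{Step 4: Conclusion.} Suppose $f \in \cP[\bE]$ vanishes on $U$. Then $f|_D$ vanishes on $U \cap D$. Expanding $f|_D$ in any fixed $\C$-basis $\{\theta^\alpha\}$ of $\La$ gives $f|_D = \sum_\alpha g_\alpha\, \theta^\alpha$, with each $g_\alpha$ a complex polynomial in the coordinates of $D$. Each $g_\alpha$ vanishes on the Zariski-dense set $U \cap D$, hence vanishes as a polynomial; so $f|_D = 0$. By Step 2 this forces $f = 0$, as required.

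\medskip
The main conceptual point (and the only real obstacle) is to set up the reduction so that super/Grassmann density becomes ordinary complex Zariski density; once Remark \ref{rem:density} is invoked to freeze the Grassmann-parity coordinates to specific values, the remainder is a direct computation of scalar parts followed by a one-line complex algebraic geometry argument.
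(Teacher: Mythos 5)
Your overall strategy — compute scalar parts of the defining pairings, reduce to a classical Zariski-open condition on the even-block entries, then slice — is the same as the paper's, and Steps 1 and 3 are essentially correct. But Step 2 contains a genuine error: the slice $D$ with \emph{fixed} odd entries $c_{ij}$ is \emph{not} Zariski dense in $\bE$, and Remark \ref{rem:density} does not assert this. The remark says that the set of points $\sum_i\la_i m_i$ with the even coordinates $\la_i$ in $\C$ and the odd coordinates ranging over \emph{all} of $\La_{\bar1}$ is dense; it does not allow you to freeze the odd coordinates to specific values. Concretely, if $c_{ij}\in\La_{\bar1}$ is any fixed value and $\phi_{ij}$ is the corresponding coordinate function on $\bE$, then $f=\phi_{ij}-c_{ij}$ is a nonzero element of $\cP[\bE]$ which vanishes identically on your $D$; so $D$ is not dense, and the inference in Step 4 ``$f|_D=0$, hence $f=0$'' has no justification.

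The repair is to make the choice of $c_{ij}$ depend on the polynomial $f$ being tested, exactly as in the proof of Proposition \ref{prop:poly-polyfn}: given $f$ whose (finitely many) coefficients lie in some $\La(N)$, take the $c_{ij}$ to be degree-one generators of $\La$ lying beyond $\La(N)$. Then the nonzero monomials in the $c_{ij}$ are linearly independent over $\La(N)$, so $f|_{D}=0$ does force all coefficients of $f$ to vanish. With this change your argument is correct and is, in substance, the paper's proof: the paper observes that the $U$-conditions impose only an open condition on $\cR(A_{11})$ and $\cR(A_{22})$ and place no constraint on the odd-parity entries, and then re-runs the density argument of Proposition \ref{prop:poly-polyfn} (fresh Grassmann generators for the odd variables, Zariski density in the remaining complex variables) rather than appealing to Remark \ref{rem:density} for a fixed slice.
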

\begin{proof}
We need to show that if $f\in\cP[\bE]$, then $f|_U=0$ implies that $f=0$. Note that $U$ is stable
under multiplication by $\C^*$. It follows that if $f$ vanishes on $U$, then each homogeneous component
of $f$ vanishes on $U$, and therefore we may assume that $f$ is homogeneous. Let $\phi_{ij}$ be the
coordinate functions on $\bE$. Then as in \eqref{eq:frv}, $f$ may be written

\be\label{eq:f}
f=\sum_{(m_1\dots,m_p)}\la_{m_1,\dots,m_{(m+2n)^2}}\phi_{11}^{m_1}\dots\phi_{i_kj_k}^{m_{k}}
\dots\phi_{m+2n,m+2n}^{m_{(m+2n)^2}},
\ee
where the sum is over all sequences $m_1,\dots,m_{(m+2n)^2}$, with $m_j\in\N$ for all $j$ and $0\leq m_k\leq 1$
for $k$ such that $[\phi_{i_kj_k}]=\bar1$, and where the indices
$(i_kj_k)$ are taken in some fixed order $k=1,2,\dots,(m+2n)^2$, and $\la_{m_1,\dots,m_{(m+2n)^2}}\in\La$.

Now the elements of $\bE$ may be regarded as matrices
\[
A:=[v_1,\dots,v_{m+2n}]=
\bordermatrix{&m&2n&\cr
                m&A_{11} & A_{12}\cr
                2n& A_{21} &  A_{22}\cr},
\]
where the entries of $A_{ij}$ lie in $\La_{[i]+[j]}$, the indices being taken modulo $2$.
Thus the entries of $A$ have parity described as shown below.
\[
A:=[v_1,\dots,v_{m+2n}]\in
\bordermatrix{&m&2n&\cr
                m&\La_{\bar0} & \La_{\bar1}\cr
                2n& \La_{\bar1} &  \La_{\bar0}\cr}.
\]

Evidently, the conditions that $A$ be in $U$ depend only on the reduction of $A$ modulo the augmentation ideal,
that is, on the specialisation $\cR(A)$ (cf. \eqref{eq:specialisation}), and since
$\cR(A_{12})=0$ and $\cR(A_{21})=0$, it follows there are no conditions imposed on the Grassmann variables.
Moreover the conditions that $A$ be in $U$ amount to an open condition on $\cR(A_{11})$ and $\cR(A_{22})$.

 We may now argue exactly as in the proof
of Proposition \ref{prop:poly-polyfn} to show that $f=0$.

\end{proof}

It is a trivial consequence of Lemma \ref{lem:uwdense} that any subset of $\bE$ which contains $U$ is also dense.

Recall that we have fixed a homogeneous $\C$ basis $e_1,\dots,e_{m+2n}$ of $V_\C$, such that
$[e_i]=[i]=\begin{cases}\bar0\text{ if }1\leq i\leq m\\ \bar1\text{ if }m+1\leq i\leq m+2n\\
\end{cases}$. This is of course also a homogeneous $\La$-basis of $V$.
\begin{remark}\label{rem:parity}
Notice that if $A:=[v_1,\dots,v_{m+2n}]\in\bE$, then from the form of the matrix above, it is evident
that 
\[
(v_i,v_j)\in\La_{\bar0}\text{ if }1\leq i,j\leq m\text{ or }m+1\leq i,j\leq m+2n.
\]
This will be used in the proofs below.
\end{remark}

\begin{lemma}\label{lem:baby}
Lemma \ref{lem:key3} is true when (i) $\sdim(V_\C)=(1|0)$ and when (ii) $\sdim(V_\C)=(0|2)$.
\end{lemma}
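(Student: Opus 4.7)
The plan is to treat the two cases separately, in each making the data $(\bE,\bE^+,\omega,\OSp(V))$ completely explicit and then invoking an elementary invariant-theoretic fact.

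For case (i), with $V_\C=\C e_1$ and $(e_1,e_1)=1$, the module $V$ is a free $\La$-module of rank one, and every even endomorphism is multiplication by some $a\in\La_{\bar 0}$, giving $\bE\cong\La_{\bar 0}$. A direct check shows $T^\dag=T$, so $\omega(a)=a^2$, and an invertible $a$ with $a^2=1$ must be $\pm 1$: writing $a=a_0+a_+$ with $a_0\in\C$ and $a_+\in(\La_+)_{\bar 0}$, one gets $a_0=\pm 1$ and then $a_+(2a_0+a_+)=0$ forces $a_+=0$ since $2a_0+a_+$ is invertible. Thus $\OSp(V)=\{\pm 1\}$. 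Any invariant $f\in\cP[\bE]$ satisfies $f(-a)=f(a)$; writing $f=\sum_{k\ge 0}c_k a^k$ with $c_k\in\La$ as allowed by Proposition \ref{prop:poly-polyfn}, the symmetry forces all odd-degree coefficients to vanish, so $f$ is a polynomial in $a^2=\omega(a)$.

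For case (ii), $V_\C$ is purely odd of dimension two with form matrix $J=\sigma$, and $\bE\cong M_2(\La_{\bar 0})$ upon identifying even endomorphisms with their matrices (whose entries lie in $\La_{\bar 0}$ as all basis parities are $\bar 1$). Since the supertranspose \eqref{eq:st} reduces to the ordinary transpose on such matrices, formula \eqref{eq:dagmat} gives
$$
X^\dag=-JX^tJ=\begin{pmatrix}x_{22}&-x_{12}\\-x_{21}&x_{11}\end{pmatrix},
$$
and a direct computation then yields $\omega(X)=X^\dag X=(\det X)\,I$. It follows that $\bE^+=\La_{\bar 0}\cdot I$ and $\OSp(V)=\{g\in M_2(\La_{\bar 0})\mid \det g=1\}=\mathrm{SL}_2(\La_{\bar 0})$.

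To conclude case (ii), I would identify $\cP[\bE]$ with $\C[x_{11},x_{12},x_{21},x_{22}]\otimes_\C\La$ and observe that $\mathrm{SL}_2(\C)\subset\mathrm{SL}_2(\La_{\bar 0})$, so any $\OSp(V)$-invariant $f$ is in particular $\mathrm{SL}_2(\C)$-invariant. The classical first fundamental theorem for $\mathrm{SL}_2(\C)$ acting on $M_2(\C)$ by left multiplication gives $\C[x_{ij}]^{\mathrm{SL}_2(\C)}=\C[\det]$, and tensoring with $\La$ yields $\La[x_{ij}]^{\mathrm{SL}_2(\C)}=\La[\det]$. Hence $f$ is a polynomial in $\det X$, and since $\det X$ is the scalar coefficient of $\omega(X)=(\det X)I$, under the identification $\bE^+\cong\La_{\bar 0}$ via $\alpha I\mapsto\alpha$ we obtain $f=\Phi\circ\omega$ for a polynomial $\Phi$ on $\bE^+$, as required. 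The only point demanding any care is the parity bookkeeping in case (ii) — in particular the verification that the supertranspose coincides with the ordinary transpose on $M_2(\La_{\bar 0})$ and that $g^\dag g=I$ reduces to $\det g=1$ — after which the reduction to classical invariant theory is automatic.
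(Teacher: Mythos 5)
Your proof is correct, but it takes a genuinely different route from the paper's, particularly in case (ii). The paper handles both base cases uniformly by the same technique it later uses in the inductive step of Lemma~\ref{lem:ind}: it restricts attention to a Zariski-dense subset $U$ (resp.\ $W$) of $\bE$ on which the inner product $(v,v)$ (resp.\ $(v,w)$) is a unit, extracts a square root $\mu$ of that unit in $\La_{\bar0}^*$, invokes the super Gram--Schmidt results (Lemmas~\ref{lem:bs-constr} and~\ref{lem:bs-change}) to produce $g\in\OSp(V)$ normalising the column(s) to standard basis vectors, and then uses homogeneity of $f$ (degree $2d$) to pull out the factor $\mu^{-2d}=(v,v)^d$ (resp.\ $(v,w)^d$); density of $U$ (resp.\ $W$) finishes. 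By contrast, you make the group completely explicit in each case ($\OSp(V)=\{\pm1\}$, resp.\ $\OSp(V)=\mathrm{SL}_2(\La_{\bar0})$ via the identity $X^\dag X=(\det X)I$), and in case (ii) you reduce to the classical first fundamental theorem for $\mathrm{SL}_2(\C)$ acting on $M_2(\C)$, using that the $\C$-subgroup $\mathrm{SL}_2(\C)\subset\OSp(V)$ acts trivially on the $\La$ coefficients, so invariance descends coefficientwise to the classical statement $\C[x_{ij}]^{\mathrm{SL}_2(\C)}=\C[\det]$. Your computations of $\dagger$, $\omega$, and the identification of $\bE^+$ with $\La_{\bar0}I$ are all correct, as is the parity bookkeeping. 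What your approach buys is a short, self-contained argument that avoids both the density machinery and Lemma~\ref{lem:bs-constr}; what the paper's approach buys is uniformity with the inductive step, which is why the authors present it that way even at the base of the induction. Both are valid.
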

\begin{proof} As we have already observed, we may take $f$ homogeneous, and since $-1\in\OSp(V)$,
we may take $f$ to have even degree, say $2d$.

(i) Let $U=\{v\in V_{\bar0}\simeq\bE\mid (v,v)\in\La^*\}$. Then $U$ is dense in $V_{\bar0}$,
and for $v\in U$, we have $(v,v)=\la\in\La_{\bar0}^*$ (by Remark \ref{rem:parity}.
 Hence $\la=z+\xi$, where $z\in\C^\times$ and $\xi$ is nilpotent.
It follows that there is an element $\mu=\exp(\frac{-\log(\lambda)}{2})\in\La_{\bar0}^*$ such that $\mu^2=\la\inv$.
Thus $(\mu v,\mu v)=1$, so there is an element $g\in\OSp(V)$ such that $g(\mu v)=e_1$. Then
$f(v)=f(gv)=f(\mu\inv e_1)=\mu^{-2d}f(e_1)=\la^df(e_1)$. Hence for $v\in U$, $f(v)=(v,v)^df(e_1)$.
Since this is true on a dense subset of $\bE$ it holds for all $v\in\bE$.

(ii) Let $W=\{[v,w]\in\bE\mid (v,w)\in\La^*\}$. Then $W$ is dense in $\bE$ and for $[v,w]\in W$,
$(v,w)=\la\in\La_{\bar0}^*$. As in (i) we take $\mu\in\La^*$ such that $\mu^2=\la\inv$; then $(\mu v,\mu w)=1$,
and there is $g\in \OSp(V)$ such that $g(\mu v)=e_2$ and $g(\mu w)=e_1$ (cf. Lemmas \ref{lem:bs-constr}
and \ref{lem:bs-change}). Thus $f([v,w])=f([gv,gw])=f(\mu\inv e_2,\mu\inv e_1)=\mu^{-2d}f(e_2,e_1)=
\la^df(e_2,e_1)$. Thus for $[v,w]$ in the dense subset $W\subset\bE$, $f(v,w)=(v,w)^df(e_2,e_1)$.
By density this holds for all $[v,w]\in\bE$.
\end{proof}

\subsection{Completion of the proof of Lemma \ref{lem:key}} We shall prove Lemma \ref{lem:key3}
(which is equivalent to
Lemma \ref{lem:key}). As usual, we take $f\in\cP[\bE]$ homogeneous of degree $2d$ such that
for $g\in\OSp(V)$ and $A=[v_1,\cdots,v_{m+2n}]\in\bE$, $f([g(v_1),\cdots,g(v_{m+2n})])=f([v_1,\cdots,v_{m+2n}])$.

We shall deduce Lemma \ref{lem:key3} from the following result, which provides the inductive step for a proof by
induction on $\sdim(V)$.

\begin{lemma}\label{lem:ind} Let $f\in\cP^{2d}[\bE]$ be as above and
assume that the statement of Lemma \ref{lem:key3} holds
for all superspaces $W_\C$ with $\sdim(W)<(m|2n)$. Let $U\subset\bE$ be the set defined in Lemma \ref{lem:uwdense}.

(i) If $n>0$, then there is an integer $r$, a polynomial function $F\in\cP[\bS]$ and a subset $U_1\supseteq U$
such that for $A\in U_1$,
\be\label{eq:ng0}
f(A)=u_{m+1,m+2}^{-r}(\omega_s(A))F(\omega_s(A).
\ee

(ii) If $m>0$, then there is an integer $r_1$, a polynomial function $F_1\in\cP[\bS]$ and a subset $U_2\supseteq U$
such that for $A\in U_2$,
\be\label{eq:mg0}
f(A)=u_{11}^{-r_1}(\omega_s(A))F_1(\omega_s(A).
\ee
\end{lemma}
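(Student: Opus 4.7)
Part (ii) is entirely parallel to (i), with $(v_1, v_1)$ and pairings involving $v_1$ replacing $(v_{m+1}, v_{m+2})$ and pairings involving $v_{m+1}, v_{m+2}$; I therefore sketch only (i).

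For $A = [v_1,\ldots,v_{m+2n}] \in U$, the scalar $\lambda := (v_{m+1}, v_{m+2}) = u_{m+1,m+2}(\omega_s(A))$ is an invertible even element of $\La$. Let $U_1$ be the open subset of $\bE$ on which $u_{m+1,m+2}\circ\omega_s$ takes invertible values; clearly $U\subseteq U_1$. For $A\in U_1$ I choose an even invertible $\mu\in\La^*$ with $\mu^2=\pm\lambda$ (sign determined by the convention for $\eta$) via the binomial series, as in the second proof of Lemma \ref{lem:omegadense}. After an appropriate rescaling so that the resulting pair of odd vectors has pairing $1$, Lemma \ref{lem:bs-constr}(2), Lemma \ref{lem:bs-change}, and composition by a permutation in $\Sp((V_\C)_{\bar 1})\subset\OSp(V)_0$ that interchanges the position-pairs $\{m+2n-1,m+2n\}$ and $\{m+1,m+2\}$ yield $h\in\OSp(V)$ such that $hv_{m+1}$ and $hv_{m+2}$ are each a $\mu$-scalar multiple of $e_{m+1}, e_{m+2}$ (up to signs). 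By $\OSp(V)$-invariance, $f(A) = f(hA)$.

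Let $V''_\C = \mathrm{span}_\C\{e_j : j \neq m+1, m+2\}$, an orthosymplectic subspace of superdimension $(m|2n-2)$, and set $V'' = V''_\C\otimes \La$. For each $i\neq m+1, m+2$, write $hv_i = w_i + a_i e_{m+1} + b_i e_{m+2}$ with $w_i\in V''$. Direct computation of $(hv_i, e_{m+1})$ and $(hv_i, e_{m+2})$, using $h\in\OSp(V)$ and the normalized values of $hv_{m+1}, hv_{m+2}$, yields explicit expressions for $a_i, b_i$ as $\mu^{-1}$-multiples of the pairings $(v_i, v_{m+1})$ and $(v_i, v_{m+2})$. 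The pointwise stabilizer of $(e_{m+1}, e_{m+2})$ in $\OSp(V)$ is $\OSp(V'')$, and its action on $hA$ affects only the $w_i$-coordinates. Viewing $f(hA)$ as a polynomial in the $w_i$-coordinates with $a_i, b_i, \mu$ treated as parameters, it is therefore $\OSp(V'')$-invariant; by the induction hypothesis (Lemma \ref{lem:key3} for $V''$, applied over an extended coefficient ring containing the parameters as indeterminates, which is legitimate by a Scholium \ref{sch:gs}-type argument), there exists a polynomial $Q$ such that
\[
f(hA) = Q\bigl((w_i,w_j),\, a_i,\, b_i,\, \mu\bigr).
\]

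Each pairing $(w_i, w_j)$ expands as $(v_i, v_j) + \mu^{-2}R_{ij}$, where $R_{ij}$ is bilinear in $(v_i, v_{m+1}), (v_i, v_{m+2}), (v_j, v_{m+1}), (v_j, v_{m+2})$. Substituting this together with the formulae for $a_i, b_i$ into $Q$ expresses $f(A)=f(hA)$ as a Laurent polynomial in $\mu$ whose coefficients are polynomials in the entries $u_{ab}(\omega_s(A))$ of $\omega_s(A)$. This expression must be independent of the sign of $\mu$: the alternative square root $-\mu$ corresponds to $h' = \sigma h$, where $\sigma\in\OSp(V)$ is the diagonal element negating $e_{m+1}, e_{m+2}$ and fixing the other basis vectors, and both $h, h'$ yield the same value $f(A)$. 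Consequently the Laurent polynomial is even in $\mu$, hence a Laurent polynomial in $\mu^2 = \pm\lambda$; multiplying through by $\lambda^r$ for $r\gg 0$ clears negative powers and produces \eqref{eq:ng0}. The principal obstacle is justifying the use of the induction hypothesis over an extended coefficient ring containing $a_i, b_i, \mu$; a secondary but delicate point is the $\Z_2$-graded sign bookkeeping required to verify that the net power of $\mu$ in each surviving term is even, so that the final expression is polynomial in $\lambda$ rather than an algebraic function of $\sqrt{\lambda}$.
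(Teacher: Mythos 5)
Your strategy is close in spirit to the paper's: normalize the pair $v_{m+1},v_{m+2}$ by an element of $\OSp(V)$ and a scalar $\mu$, decompose the remaining vectors into their $e_{m+1},e_{m+2}$ components plus a piece $w_i$ lying in the smaller orthosymplectic space, and apply the induction hypothesis there. But the step you flag as the "principal obstacle" is a genuine gap, and the paper resolves it by a different mechanism that you should compare against.

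You propose to view $f(hA)$ as a polynomial in the $w_i$-coordinates with $a_i, b_i, \mu$ treated as formal parameters, and to apply Lemma \ref{lem:key3} "over an extended coefficient ring containing the parameters as indeterminates, which is legitimate by a Scholium \ref{sch:gs}-type argument." This does not go through as stated. Scholium \ref{sch:gs} covers base change $\C \hookrightarrow K$ (a field extension), replacing $\La$ by $\La_K = \La\ot_\C K$. Your $a_i$ and $b_i$ have mixed parities (odd for $i\le m$, even for $i>m+2$), and $\mu$ is a unit even variable; adjoining them is not covered by the Scholium, and even if one could justify a base change, one would still need to verify that substituting the actual $\La$-valued functions $a_i(A), b_i(A), \mu(A)$ back preserves the conclusion. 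The paper avoids the issue entirely: it fixes the $\OSp(V')$-invariant function $f'$ on a space of tuples, expands it uniquely as $f' = \sum_{I,J}f_{I,J}([v''_1,\dots])\,x^Iy^J$ where the coefficients $f_{I,J}\in\cP[\bE']$ are honest polynomial functions over $\La$ with no extended ring. Because $f'$ and the coordinate functions $x_i, y_i$ are $\OSp(V')$-invariant and the expansion is unique, each $f_{I,J}$ is $\OSp(V')$-invariant, and the induction hypothesis applies directly to each $f_{I,J}$ over $\La$. This is the key structural observation your proposal lacks.

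Your secondary worry — ensuring the end result is even in $\mu$ — is also unnecessary once you follow the paper's route. The paper takes $\mu^2 = -\la^{-1}$ and then never refers to $\mu$ again: the factor $\mu^{-2d}$ becomes $(-\la)^d$, and the formulas \eqref{eq:pm2} and \eqref{eq:pm4} for $x_i, y_i$ and $(v''_i,v''_j)$ are written directly as rational functions of the entries of $\omega_s(A)$ with denominator a power of $\la = u_{m+1,m+2}(\omega_s(A))$. There is no residual dependence on the choice of square root, so the argument involving $\sigma$ is superfluous. Your instinct to clear denominators by multiplying by $\la^r$ at the end is correct and matches the paper.
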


\begin{proof}[Proof of Lemma \ref{lem:ind}] We start with the proof of (i).

Assume that $n\geq 1$, and take $U_1:=\{[v_1,\cdots,v_{m+2n}]\in\bE\mid (v_{m+1},v_{m+2})\in\La^*\}$.
Take $A=[v_1,\cdots,v_{m+2n}]\in U_1$, and let $(v_{m+1},v_{m+2})=\la\in\La^*$. Then as above,
$\la\in\La_{\bar0}^*$.
If $\mu^2=-\la\inv$, then $(v_{m+1}\mu,v_{m+2}\mu)=-1$, whence by Lemmas \ref{lem:bs-constr} and
 \ref{lem:bs-change} there is
an element $g_0\in\OSp(V)$ such that $g_0(v_{m+1}\mu)=e_{m+1}$ and $g_0( v_{m+2}\mu)=e_{m+2}$
(standard basis elements).
Therefore
\be\label{eq:pm1}
 \begin{aligned}f([v_1,&\cdots,v_{m+2n}])=f([g_0(v_1),\cdots,g_0(v_{m+2n})])\\
=&f([\mu\inv v'_1, \cdots, \mu\inv v_m',\mu\inv e_{m+1},\mu\inv e_{m+2},\cdots,\mu\inv v_{m+2n}'])\\
&\text{ where }v_i'=\mu g_0 (v_i),\;i\neq m+1,m+2\\
=&\mu^{-2d}f([v'_1,v'_2,\dots v_m',e_{m+1}, e_{m+2},\cdots,v_{m+2n}'])\\
=&(-\la)^{d}f([v'_1,v'_2,\dots v_m',e_{m+1}, e_{m+2},\dots,v_{m+2n}'])\\
=&(-(v_{m+1},v_{m+2}))^{d}f([v'_1,v'_2,\dots v_m',e_{m+1}, e_{m+2},v_{m+3}',\dots,v_{m+2n}']).\\
\end{aligned}
\ee

Now for $i\in\{1,\dots,m+2n\}$ and $i\neq m+1, m+2$, there are unique functions $x_i,y_i$ (of $A$) such that
\[
v_i'':=v_i'-e_{m+1}x_i-e_{m+2}y_i\in\langle e_1,\dots,\hat e_{m+1},\hat e_{m+2},\dots
e_{m+2n}   \rangle_{\La}=\langle e_{m+1},e_{m+2}\rangle^{\perp}.
\]
Write $V'$ (resp. $V'_\C$) for the $\La$-span (resp. $\C$-span) of $\{e_i\mid i\neq m+1,m+2\}$.

The $x_i,y_i$ may be thought of as the two entries in position $m+1$ and $m+2$ of
the vectors $v_i'$, with $v_i''$ having those two entries equal to $0$ and the others equal to those of $v_i'$.
Solving the equations $(e_{m+1},v_i'')=(e_{m+2},v_i'')=0$ yields immediately
that $x_i=(e_{m+2},v_i')$ and $y_i=-(e_{m+1},v_i')$.
Now using the definition of $v_i'$ in \eqref{eq:pm1}, we see easily that
\be\label{eq:pm2}
x_i=\frac{-(v_{m+2},v_i)}{(v_{m+1},v_{m+2})}\text{ and }
y_i=\frac{(v_{m+1},v_i)}{(v_{m+1},v_{m+2})}.
\ee

Now let $\bE'$ and $\bS'$ be the analogues for $V'=\langle e_{m+1},e_{m+2}\rangle^\perp$ of $\bE$ and $\bS$.

Let $V^{m+2(n-1)}=\{[v_1',\dots,v_{m}', v_{m+3}',\dots ,v_{m+2n}']\mid v_i'\in \La_{[i]}\}$
 and consider the function $f'\in\cP[V^{m+2(n-1)}]$ defined by
\[
f'([v_1',\dots,v'_{m}, v'_{m+3},\dots ,v_{m+2n}'])=f([v_1',\dots,v_m',e_{m+1},e_{m+2},v_{m+3}',\dots,v_{m+2n}'])
\]
for $[v_1',\dots,v'_{m}, v'_{m+3},\dots ,v_{m+2n}']\in V^{m+2(n-1)}$.

If $g'\in\OSp(V')$, then since $g'(e_j)=e_j$ for $j=m+1,m+2$ (see \eqref{eq:pm2}), we have
$$
\begin{aligned}
f'([g'(v_1'),&\dots,g'(v'_{m}), g'(v'_{m+3})\dots, g'(v_{m+2n}')])\\
=&f([g'(v_1'),\dots,g'(v'_{m}), e_{m+1},e_{m+2},g'(v'_{m+3})\dots, g'(v_{m+2n}')])\\
=&f([g'(v_1'),\dots,g'(v'_{m}), g'(e_{m+1}),g'(e_{m+2}),g'(v'_{m+3})\dots, g'(v_{m+2n}')])\\
=&f([v_1',\dots,v'_{m}, e_{m+1},e_{m+2},v'_{m+3}\dots, v_{m+2n}'])\\
=&f'([v_1',\dots,v'_{m},v'_{m+3}\dots, v_{m+2n}']).\\
\end{aligned}
$$
Thus $f'$ is invariant under $\OSp(V')$.

Now using the relations $v_i'=v_i''+x_ie_{m+1}+y_ie_{m+2}$,
$f'([v_1',\dots,v'_{m}, v'_{m+3},\dots ,v_{m+2n}'])$
may be expanded as a sum of monomials in the $x_i$ and $y_i$.
\be\label{eq:pm3}
f'([v_1',\dots,v'_{m}, v'_{m+3},\dots ,v_{m+2n}'])=
\sum_{I,J}f_{I,J}([v_1'',\dots,v''_{m}, v''_{m+3},\dots ,v_{m+2n}''])x^Iy^J,
\ee
where the sum is over all (relevant, since $x_i,y_i$ may be either
polynomial variables or Grassmann variables, depending on $i$)
monomials $x^Iy^J$ in the $x_i,y_i$. Note that $f'$ is a polynomial
in the entries of the matrix $[v_1',\dots,v'_{m}, v'_{m+3},\dots ,v_{m+2n}']$, which has size
$(m+2n)\times (m+2n-2)$, and whose $(m+1)^{\text st}$ and $(m+2)^{\text nd}$ rows
are respectively $x_i$ and $y_i$. Thus
 $[v_1'',\dots,v''_{m}, v''_{m+3},\dots ,v_{m+2n}'']$
is the submatrix obtained by deleting rows $m+1$ and $m+2$.

Since the $x_i,y_j$ are invariant under $\OSp(V')$
(see \eqref{eq:pm2}),  it follows from the invariance of $f'$ under $\OSp(V')$, as well as the uniqueness
of the expression \eqref{eq:pm3} for any function $f'\in\cP[V^{m+2n-2}]$ (see above),
 that each polynomial $f_{I,J}\in\cP[\bE']$ is invariant under $\OSp(V')$. We may therefore apply to the
$f_{I,J}$ the induction hypothesis that Lemma \ref{lem:key3} holds for $V'$,
 to deduce that there are polynomials $F_{I,J}\in\cP[\bS']$ such that for $A'\in\bE'$,
$f_{I,J}(A')=F_{I,J}(\omega_s(A'))$. That is, $f_{I,J}([v_1'',\dots,v''_{m}, v''_{m+3},\dots ,v_{m+2n}''])$
is a polynomial in the coordinate functions $(v_i'',v_j'')$.

But in terms of the original element $[v_1,\dots,v_{2n}]\in\bE$, an easy calculation shows that
we have, for any pair $i,j\neq m+1,m+2$, integers
$n_{ij},m_{ij}$ such that (recalling that $[x_i]=[y_i]=[i]+\bar1$)
\be\label{eq:pm4}
\begin{aligned}
(v_i'',v_j'')=&\mu^2(v_i,v_j)+n_{ij}x_iy_j+m_{ij}y_ix_j\\
=&-\frac{(v_i,v_j)(v_{m+1},v_{m+2})+n_{ij}(v_{m+2},v_i)(v_{m+1},v_j)+
m_{ij}(v_{m+1},v_i)(v_{m+2},v_j)}{(v_{m+1},v_{m+2})^2}.\\
\end{aligned}
\ee

Substituting \eqref{eq:pm4} into \eqref{eq:pm3}, and then into \eqref{eq:pm1}, we see that for
an element $A=[v_1, v_2,\dots,v_{m+2n}]\in U_1\subset\bE$
(so that $(v_{m+1},v_{m+2})\in\La^*$), we have
\be\label{eq:pm5}
f([v_1,\dots,v_{m+2n}])=u_{m+1,m+2}^{-r}F(\omega_s(A)),
\ee
where $F$ is a polynomial in the coordinate functions $u_{ij}$ of $\bS$ and $r\in\N$.

This proves part (i) of Lemma \ref{lem:ind}, since evidently $U_1\supseteq U$.

We now turn to the {\bf proof of Lemma \ref{lem:key3} (ii)}.
We continue to assume that $f\in\cP[\bE]$ is homogeneous of degree $2d$ and satisfies $f([g(v_1),\dots,g(v_{m+2n})])
=f([v_1,\dots,v_{m+2n}])$ for all $g\in\OSp(V)$.
We are given that $m>0$.

Let $U_2=\{A:=[v_1,\dots,v_{m+2n}]\in\bE\mid (v_1,v_1)\in\La^*\}$.
If $A=[v_1,\dots,v_{m+2n}]\in U_2$, then $(v_1,v_1)=\la\in\La^*$, and there is an element
$\mu\in\La^*$ such that $\la=\mu^{-2}$.
So $(\mu v_1,\mu v_1)=1$ and there is an element $g_0\in\OSp(V)$ such that $g_0(\mu v_1)=e_1$
 (the standard basis element).

As in the proof of part (i), we then have
\be\label{eq:f1}
\begin{aligned}
f(A)=&f([v_1,\dots,v_{m+2n}])=f([g_0(v_1),\dots,g_0(v_{m+2n})])\\=&
f([\mu\inv e_1,\mu\inv v_2',\dots,\mu\inv v'_{m+2n}])
=\mu^{-2d}f([e_1,v_1',\dots,v'_{m+2n}])\\
=&(v_1,v_1)^df([e_1,v_1',\dots,v'_{m+2n}]),\\
\end{aligned}
\ee
where $v_i'=\mu g_0(v_i)$ for $i=2,\dots,m+2n$.

Now write $V^{m-1+2n}$ for the set of $[v_2',\dots,v'_m,v'_{m+1},\dots,v'_{m+2n}]$ such that
$v_i'\in V_{\bar0}$ for $2\leq i\leq m$,
and $v'_i\in V_{\bar1}$ for $m+1\leq i\leq m+2n$, and let $f'\in\cP[V^{m-1+2n}]$ be defined by
\be\label{eq:f2}
f'([v_2',\dots,v_{m+2n}'])=f([e_1,v_2',\dots,v_{m+2n}']).
\ee

Let $V'_\C=\sum_{i=2}^{m+2n} \C e_i$ and write $V'=V'_\C\ot_\C\La$. Denote by $\bE'$ and $\bS'$
 the analogues for
$V'$ of $\bE$ and $\bS$ for $V$. Then for $g\in\OSp(V')=\{g\in\OSp(V)\mid g(e_1)=e_1\}$,
we have $f'([g(v_2'),\dots,g(v_{m+2n}')])=f([g(e_1),g(v_2'),\dots,g(v_{m+2n}')]=
f([e_1,v_2',\dots,v_{m+2n}'])=f'([v_2',\dots,v_{m+2n}'])$.

Thus $f'$ is invariant under $\OSp(V')$. We next use $f'$ to define $\OSp(V')$-invariant functions on $\bE'$.

There exist unique functions $z_i$ of $A\in U\subseteq\bE$ such that for $i=2,\dots, m+2n$,
\be\label{eq:f3}
v_i'':=v_i'-e_1z_i\in V'.
\ee
In fact the element $v_i''$ in \eqref{eq:f3} lies in $V'$ $\iff$ $(e_1,v_i'')=0$, and a simple calculation
shows that this is true if and only if, for $i=2,\dots,m+2n$,
\be\label{eq:f4}
z_i=\frac{(v_1,v_i)}{(v_1,v_1)}.
\ee
Note that as a function on $U_2$, $z_i$ is invariant under $\OSp(V')$.

Now using the relations $v_i'=v_i''+z_ie_1$, of \eqref{eq:f3}, the function $f'$ of \eqref{eq:f2} may be expanded
as a polynomial in the $z_i$, with coefficients polynomials in the $v_i''$.
\be\label{eq:f5}
f'([v_2',\dots,v_{m+2n}'])=\sum_I f_I([v_2'',\dots,v_{m+2n}''])z^I,
\ee
where the sum is over all monomials $z^I$ in the $z_i$, and for each $I$, $f_I\in\cP[\bE']$.

Note that $f'([v_2',\dots,v_{m+2n}'])$ is a (super)-polynomial in the entries of the matrix $[v_2',\dots,v_{m+2n}']$,
of which the first row is $[z_2,\dots,z_{m+2n}]$, and $[v_2'',\dots,v_{m+2n}'']$ is the lower
$(m-1+2n)\times (m-1+2n)$ submatrix. Note also that as polynomial function, $z_i$ is a `polynomial variable'
for $2\leq i\leq m$, and a `Grassmann variable' for $i>m$. Thus each polynomial function on $V^{m-1+2n}$
has a unique expression of the form \eqref{eq:f5}. The invariance of $f'$ under $\OSp(V')$ therefore shows
(since the $z_i$ are invariant under $\OSp(V')$) that each polynomial $f_I$ is invariant under $\OSp(V')$.

We may therefore apply the induction hypothesis to $V'$ to conclude that for each $I$, there is a polynomial
$F_I\in\cP[\bS']$ such that $f_I([v_2',\dots,v_{m+2n}''])=F_I((v_i'',v_j''))$.
Moreover a simple calculation shows that in terms of the original variables $v_i$,
\be\label{eq:f6}
(v_i'',v_j'')=\frac{(v_1,v_1)(v_i,v_j)-(v_1,v_i)(v_1,v_j)}{(v_1,v_1)^2}.
\ee

Note that since $[v_i'']=[v_i']=[z_i]=(v_1,v_i)=[i]$, it follows that $(v_i'',v_j'')=(-1)^{[i][j]}(v_j'',v_i'')$.

Substituting \eqref{eq:f6} and \eqref{eq:f4} into \eqref{eq:f5}, and then into \eqref{eq:f1}, we see that for
$A\in U_2\subset\bE$, we have a regular function $F_1\in\cP[\bS]$ such that for $A\in U_2$,
\be\label{eq:f7}
f(A)=f([v_1,\dots, v_{m+2n}])=(v_1,v_1)^{-r_1}F_1(\omega_s(A)),
\ee
for some $r_1\in\Z$.

This completes the proof of Lemma \ref{lem:ind}.
\end{proof}

\begin{proof}[Completion of the proof of Lemma \ref{lem:key3}]
The proof is by induction on $\sdim(V)=(m|2n)$. If $m+n=1$ the result is true by Lemma \ref{lem:baby}.
We therefore take $m+n$ to be greater than $1$, and assume the result for all superspaces of smaller dimension.
Let $U$ be the (dense) subset defined in Lemma \ref{lem:uwdense}.
We consider three cases.

{\it Case 1:} If $m=0$, then $n\geq 2$. By Lemma \ref{lem:ind}(i), there is a subset $U_1\supset U$ of $\bE$
such that $f([v_1,\dots,v_{2n}]=(u_{12}^{-r_1}F)(\omega_s([v_1,\dots,v_{2n}])$ for $[v_1,\dots,v_{2n}]\in U_1$
and $F$ regular on $\bS$. But the same argument shows that we also have a subset $U_2$ with
$U\subseteq U_2\subseteq\bE$,
such that $f([v_1,\dots,v_{2n}]=(u_{34}^{-r_2}F_2)(\omega_s([v_1,\dots,v_{2n}])$ for $[v_1,\dots,v_{2n}]\in U_2$
and $F_2$ regular on $\bS$. It follows that
\be\label{eq:c1}
u_{12}^{r_1}F_2\circ\omega_s=u_{34}^{r_2}F\circ\omega_s
\ee
on the subset $U_1\cap U_2$ of $\bE$. But $U_1\cap U_2\supseteq U$, and $U$ is dense
in $\bE$. Thus the regular functions $u_{12}^{r_1}F_2\circ\omega_s$ and $u_{34}^{r_2}F\circ\omega_s$
coincide on the dense subset $U$ of $\bE$. So
they coincide on $\bE$ whence the regular functions $u_{12}^{r_1}F_2$ and $u_{34}^{r_2}F$ on $\bS$
coincide on $\omega_s(\bE)$ which by
Lemma \ref{lem:omegadense} is dense in $\bS$. Since $u_{12}$ is an even coordinate
function on $\bS$, this shows that
if $r_1>0$, $u_{12}^{r_1}$ divides $F$ in $\cP[\bS]$. It follows that $u_{12}^{-r_1}F$ is regular on $\bS$,
and the regular functions $f$ and $u_{12}^{-r_1}F\circ\omega_s$ agree on the dense subset $U_1\subset\bE$.
Hence they agree on $\bE$, which
completes the proof in this case.

{\it Case 2:} If $n=0$ then $m>1$ and we argue as above, using Lemma \ref{lem:ind}(ii), obtaining the statement that
\be\label{eq:c2}
u_{11}^{r_1}F_2\circ\omega_s=u_{22}^{r_2}F\circ\omega_s
\ee
on a subset $U_1\cap U_2\supseteq U$ of $\bE$,
in analogy with Case 1. The rest of the argument is the same.

{\it Case 3:} If both $m,n\geq 1$, then again we use Lemma \ref{lem:ind} (i) and (ii) to obtain an equation
\be\label{eq:c3}
u_{11}^{r_1}F_2\circ\omega_s=u_{m+1,m+2}^{r_2}F\circ\omega_s
\ee
on a subset $U_1\cap U_2\supseteq U$, in analogy with \eqref{eq:c1}. The rest of the argument is again the same.

This completes the proof of Lemma \ref{lem:key3}, and hence that of Lemma \ref{lem:key}.
\end{proof}
\section{The super Pfaffian and invariants of $\osp(V_\C)$}\label{sect:pfaffian}

If the Harish-Chandra pair $(G_0,\osp(V_\C))$ is replaced by simply $\osp(V_\C)$
it is in general no longer true that the Brauer algebra maps surjectively onto the endomorphism algebra.
The situation is not dissimilar to the comparison between the invariants of $\fso(m)$ and those of ${\rm O}(m)$
\cite{P2}.
Here we make some comments about the additional invariants required,
which all come from the super Pfaffian discovered by Sergeev \cite{S0}.

\subsection{The super Pfaffian}
%
%

Let $V_\C$ be a complex superspace with $\sdim(V_\C)=(m|2n)$. Given a non-degenerate
graded-symmetric even form $(-,-)$ on $V_\C$, we denote the corresponding orthosymplectic Lie
superalgebra $\osp(V_\C)$ by $\fg$ throughout this section.

\subsubsection{The invariant integral}
Let us recall from \cite{SZ01, SZ05} some elementary facts about the left $\fg$-invariant (Hopf) integral on
the finite dual Hopf superalgebra $\U^0(\fg)$ of the universal enveloping superalgebra of
$\fg$.  First, recall that $\U^0(\fg)$ is the Hopf subalgebra of the dual superalgebra which is generated by
the coordinate functions of the finite dimensional representations of $\fg$ (or $\U(\fg)$). That is, $\U^0(\fg)$
consists of the elements of the dual space of $\U(\fg)$ whose left and right translations under $\fg$ span
a finite dimensional space.

Similarly, denote by $\U(\fg_{\bar0})^0$ the finite dual Hopf algebra of $\U(\fg_{\bar0})$.
Let $p: \U(\fg)^0\longrightarrow \U(\fg_{\bar0})^0$ be the restriction induced by the natural
Hopf superalgebra embedding $\U(\fg_{\bar0})\longrightarrow\U(\fg)$.  There exists a unique (left and right)
$\fg_{\bar0}$-invariant integral $\int_0: \U(\fg_{\bar0})^0\longrightarrow \C$ such that $\int_0 1=1$, so
that we have the composition $\int_0 p=\int_0\circ p: \U(\fg)^0\longrightarrow\C$,
which can be regarded as an element of $(\U(\fg)^0)^*$.
Consider  $Z=\U(\fg)/\U(\fg)\fg_{\bar0}$ as a $\fg$-module, and let $Z^\fg$ be its invariant submodule.
It was shown in \cite{SZ05} (and in \cite{SZ01} for various cases) that  $\dim Z^\fg = 1$.

Fix a generator $z+\U(\fg)\fg_{\bar0}$ of $Z^\fg$ and let
$\nu: \U(\fg)\longrightarrow(\U(\fg)^0)^*$ be the superalgebra embedding
defined for any $x\in\U(\fg)$ by $\nu(x)(f)= (-1)^{[f][x]}\langle f, x\rangle$ for all $f\in\U(\fg)^0$.
Now the comultiplication in $\U(\fg)^0$ gives a multiplication in $(\U(\fg)^0)^*$, defined by
$\psi.\phi(f)=\sum\psi(f_1)\phi(f_2)$, for $\psi,\phi\in(\U(\fg)^0)^*$ and $f\in\U(\fg)^0$, 
where $\Delta(f)=\sum f_1\ot f_2$. Therefore we may define  the following element
of  $(\U(\fg)^0)^*$.
\begin{eqnarray} \label{eq:int}
\int:= \nu(z).\int_0 p.
\end{eqnarray}
Then $\int$ is a nontrivial left $\fg$-invariant integral on $\U(\fg)^0$, in the sense that for any $u\in \U(\fg)$,
\[
\nu(u).\int=1_{\U(\fg)^0}(u)\int.
\]
Moreover $\int$ is independent of the representative $z$ chosen for
$z+\U(\fg)\fg_{\bar0}$, and the integral is also right invariant and is unique up to a scalar multiple.
Note that $\epsilon:=1_{\U(\fg)^0}: \U(\fg)\longrightarrow \C$ is the co-unit of $\U(\fg)$, which is an algebra map
satisfying $\epsilon(1)=1$ and $\epsilon(X)=0$ for all $X\in\fg$.

Any locally finite $\U(\fg)$-module $M$ may be regarded as a right $\U(\fg)^0$-comodule with the structure map
$\omega: M\longrightarrow M\otimes \U(\fg)^0$, $m\mapsto \sum_{(m)} m_{(0)}\otimes f_{(1)}$,
defined by
\[
\sum_{(m)} m_{(0)}  \langle f_{(1)}, x\rangle = (-1)^{[m][x]}x m, \quad \forall x\in U(\fg).
\]
Let $\cI_M:=(\id_M\otimes\int)\omega$. Then one sees easily that $\cI_M(M)\subseteq M^\fg$.
Local finiteness of $M$ implies that it is semi-simple when regarded as a $\U(\fg_{\bar0})$-module.
Thus $M$ decomposes uniquely into $M=M^\perp\oplus M^{\fg_{\bar0}}$ as $\fg_{\bar0}$-module,
and $(\id_M\otimes\int_0 p)\omega$ is the projection of $M$ onto $M^{\fg_{\bar0}}$. Further, unravelling
the definition of $\int$, we see that for any $m=m^\perp+m^0$ with $m^\perp\in M^\perp$ and
$m^0\in M^{\fg_{\bar0}}$, we have  $\cI_M(m) = z m^0$.

\begin{remark} The category of finite dimensional $\fg$-modules is semi-simple
only if $\fg=\osp(1|2n)$. Thus by \cite[Proposition 2]{SZ01},
$\cI_M(M^\fg)=M^\fg$ if $\fg=\osp(1|2n)$ and $\cI_M(M^\fg)=0$ otherwise.
\end{remark}

\subsubsection{The super Pfaffian}

Let $V_\C^m=V_\C\otimes\C^m$, where $\C^m$ is purely even.
Denote by $\cS=S(V_\C^m)$ the supersymmetric algebra (Definition \ref{def:symalg})
 of $V_\C\otimes\C^m$ over $\C$.
The orthosymplectic Lie superalgebra $\fg$ acts on the left factor of  $V_\C^m$, and we  thus have a natural
$\fg$-action on $\cS$.  Note that $\Lambda\otimes\cS$ is isomorphic
to the superalgebra of polynomial functions on
(the dual of) $V^m_{\bar0}=V_{\bar0}\otimes\C^m$ as a $\GL(V)$-module (cf. Proposition
\ref{prop:poly-polyfn}).

To give a concrete description of the super-Pfaffian,
we take $B=(e_1, e_2, \dots, e_{m+2n})$ to be an ordered homogeneous
orthosymplectic basis of $V_\C$, and let $f_1=(1\ 0 \ 0 \dots \ 0)$, $f_2=(0 \ 1\ 0 \dots \ 0)$,
$\dots$, $f_m=(0\ 0 \dots 0 \ 1)$ be the standard basis of $\C^m$.
Then $\cS$ is the tensor product of the polynomial algebra in $x_{i j}=e_i\otimes f_j$ (for $1\le i, j\le m$), and
the Grassmann (exterior) algebra generated by
$\theta_{\mu j}=e_{m+\mu}\otimes f_j$ (for $1\le\mu\le 2n$, $1\le j\le m$).

Now the natural $\gl(V_\C)\times \gl_m$ action on $V_\C^m$ induces an action on $\cS$.
A version of Howe duality states that as $\gl(V_\C)\times \gl_m$-module,
$
\cS= \bigoplus_{\lambda} L_\lambda\otimes W_\lambda,
$
where $L_\lambda$ (resp. $W_\lambda$) denotes the irreducible $\gl(V_\C)$-module (resp.
$\gl_m$-module) with highest weight $\lambda$, and the direct sum is over all partitions
$\lambda$ of depth $\le m$. Here highest weight modules are defined with respect to the
standard Borel subalgebra $\fb$ of $\gl(V_\C)$ spanned by the matrix units $E_{a b}$
(relative to the basis $B$)
with $1\le a\le b\le m+2n$.

If $\cI_\cS$ is the special case for $M=\cS$
of the map $\cI_M$ defined earlier in general, then $\cI_\cS(\cS)\subset \cS^\fg$.
To understand the invariants in $\cI_\cS(\cS)$, we note that $Z$ is isomorphic to $\wedge \fg_{\bar1}$
as a vector space, and hence any generator of $Z^\fg$ contains a nonzero term in the top degree.
Given the generator $z+\U(\fg)\fg_{\bar0}$ of $Z^\fg$, we express $z$ as a sum of PBW
basis elements involving only elements of $\fg_{\bar1}$.
Regarded as an element of $\U(\gl(V_\C))$,
the element $z$ is a sum of weight vectors (with respect to the adjoint action)
of $\gl(V_\C)_{\bar0}\cong \gl_m(\C)\oplus\gl_{2n}(\C)$, and we denote by $z_+$ the component with highest weight.
Then from the embedding $\fg\subset\gl(V_\C)$ we see that $z_+\in
\wedge^{2mn} \gl(V_\C)_{\bar1}$,
where $\gl(V_\C)_{\bar 1}$ is the span of $E_{i, m+\nu}$ with $1\le i\le m$ and $1\le \nu\le 2n$.
After multiplying $z$ by an appropriate nonzero scalar,
we may assume that $z_+$ is equal to the PBW basis element in $\U(\gl(V_\C))$
where the $E_{i, m+\nu}$ occur in a certain order.  Note that
$g z_+= \det(g)^{2n} z_+$ for all $g\in\GL((V_\C)_{\bar0})\times\GL((V_\C)_{\bar1})$.

Denote by $G_0$ the subgroup ${\rm O}((V_\C)_{\bar0})\times{\rm Sp}((V_\C)_{\bar1})$
of the orthosymplectic supergroup. Then $G_0$ acts on $\fg$ by conjugation.
This action induces an action on $Z$.  In particular,
for any $g\in G_0$,
\begin{eqnarray}\label{eq:z-group-inv}
g(z+\U(\fg)\fg_{\bar0})=\det(g)^{2n} (z+\U(\fg)\fg_{\bar0}) = z+\U(\fg)\fg_{\bar0}.
\end{eqnarray}
It follows that $z+\U(\fg)\fg_{\bar0}$ is an invariant of the orthosymplectic supergroup.

Taking the above discussion into account, we now have the following result.
\begin{lemma}\label{lem:construct}
\begin{enumerate}
\item
Let $L_\lambda\subset\cS$ be a simple $\gl(V_\C)$-submodule which is typical, and assume that a nonzero
$\delta_0\in (L_\lambda)^{\fg_{\bar0}}$ is either a $\gl(V_\C)$-highest weight vector or
a $\gl(V_\C)$-lowest weight vector. Then $\delta:=\cI_\cS(\delta_0)$ is a nonzero element of $\cS^\fg$.
\item If for all $g\in {\rm O}((V_\C)_{\bar0})\times{\rm Sp}((V_\C)_{\bar1})$, the $\fg_{\bar0}$-invariant
$\delta_0\in \cS^{\fg_{\bar0}}$ satisfies $g\cdot \delta_0=\det(g) \delta_0$, then $g\cdot \delta=\det(g) \delta$.
\end{enumerate}
\end{lemma}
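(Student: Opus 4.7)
My plan is to prove both parts by exploiting the formula $\cI_\cS(m)=z\,m^0$ (with $m^0$ the $\fg_{\bar 0}$-invariant component of $m$) that was recalled just before the lemma, together with the $G_0$-invariance of $z+\U(\fg)\fg_{\bar 0}$ recorded in \eqref{eq:z-group-inv}.

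For part (1), since $\delta_0\in(L_\lambda)^{\fg_{\bar 0}}$, the formula reduces immediately to $\cI_\cS(\delta_0)=z\,\delta_0$, so the task is to show $z\,\delta_0\neq 0$ in $L_\lambda$. I would decompose $z=\sum_\mu z_\mu$ into its $\gl(V_\C)_{\bar 0}$-weight components via the PBW expansion in odd root vectors of $\gl(V_\C)$; since the vectors $z_\mu\,\delta_0$ lie in distinct weight spaces, it suffices to produce one nonvanishing component. When $\delta_0$ is a lowest weight vector of $L_\lambda$, I would use the extremal upper component $z_+\in\wedge^{2mn}\gl(V_\C)_{\bar 1}$ (the ordered product of all $E_{i,m+\nu}$) singled out in the text just before the lemma. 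By typicality of $L_\lambda$, the classical decomposition $L_\lambda\cong \wedge\bigl(\mathrm{span}\{E_{m+\nu,i}\}\bigr)\cdot L_\lambda^{(0)}$ (or its dual) shows that the product of all odd raising operators acts as a nonzero map from the lowest $\gl(V_\C)_{\bar 0}$-isotypic component into the highest, yielding $z_+\,\delta_0\neq 0$. The highest weight case is strictly symmetric: one uses instead the opposite extremal component $z_-$, a PBW monomial in the odd lowering operators $E_{m+\nu,i}$, whose coefficient in $z$ is nonzero since $z_-$ is the unique lowest-weight PBW contribution modulo $\U(\fg)\fg_{\bar 0}$, and which acts injectively on a highest weight vector of a typical module.

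The main obstacle in (1) will be verifying that the relevant extremal PBW-component of $z$ is genuinely nonzero and does not annihilate $\delta_0$; this requires combining the uniqueness (up to scalar) of the generator of the one-dimensional space $Z^\fg$ with the structure theorem for typical simple $\gl(V_\C)$-modules, ensuring that the extremal product of odd root vectors acts injectively on extremal weight vectors.

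For part (2), fix $g\in G_0$. Compatibility of the adjoint $G_0$-action on $\U(\fg)$ with its action on $\cS$ (both induced from the $G_0$-representation on $V_\C$) gives
\[
g\cdot\delta \;=\; g\cdot(z\,\delta_0) \;=\; (g\cdot z)(g\cdot\delta_0) \;=\; \det(g)\,(g\cdot z)\,\delta_0.
\]
Since $\det(g_1)^{2n}=1$ for $g=(g_1,g_2)\in \mathrm{O}((V_\C)_{\bar 0})\times\mathrm{Sp}((V_\C)_{\bar 1})$, equation \eqref{eq:z-group-inv} gives $g\cdot z = z + z'$ for some $z'\in\U(\fg)\fg_{\bar 0}$. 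Writing $z'=\sum_i u_iX_i$ with $X_i\in\fg_{\bar 0}$, and using $X_i\,\delta_0=0$, we conclude $z'\,\delta_0=0$. Therefore $(g\cdot z)\,\delta_0 = z\,\delta_0 = \delta$, and hence $g\cdot\delta = \det(g)\,\delta$, as required.
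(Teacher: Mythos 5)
Your proposal is correct and follows essentially the same route as the paper's own (quite terse) proof: reduce to showing $z\delta_0\ne 0$ via the formula $\cI_\cS(\delta_0)=z\delta_0$, argue by the separation of $\gl(V_\C)_{\bar0}$-weights that it suffices to check the extremal component $z_\pm\delta_0\ne 0$, and invoke typicality for that, while part (2) is exactly the manipulation using \eqref{eq:z-group-inv} together with the Harish--Chandra compatibility $g\cdot(u\,m)=(\operatorname{Ad}_g u)(g\cdot m)$ and the vanishing of $\U(\fg)\fg_{\bar0}$ on $\fg_{\bar0}$-invariants. The paper leaves the justification of ``typicality $\Rightarrow z_+\delta_0\ne 0$'' and the nonvanishing of $z_-$ as assertions, which you spell out via the structure of Kac modules; that is the right explanation, and the rest of your write-up matches the paper's argument step for step.
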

\begin{proof}
For part (1), we assume that $\delta_0$ is a $\gl(V_\C)$-lowest weight vector of $L_\lambda\in\cS$.
Then typicality of $L_\lambda$ implies that $z^+\delta_0\ne 0$ and hence $z\delta_0\ne 0$.
If $\delta_0$ is a $\gl(V_\C)$-highest weight vector, we proceed similarly by
considering the lowest weight component $z_-$ of $z$. Part (2) follows from \eqref{eq:z-group-inv}.
\end{proof}

Part (2) of the lemma asserts that if $\delta_0\in\cS$ is a $\fg_{\bar0}$-invariant but not an invariant of
${\rm O}((V_\C)_{\bar0})\times{\rm Sp}((V_\C)_{\bar1})$, then $\delta\in \cS^\fg$ will
not be an invariant of the orthosymplectic supergroup.
We now turn to the construction of such a $\delta_0$.

Let $\Delta=\det X$, where $X=(x_{i j})$ is the $m\times m$ matrix with entries $x_{ij}$ as above.
Set $\Pi=\prod_{\mu, j} \theta_{\mu j}$ for some
fixed ordering of the elements $\theta_{\mu j}$; evidently $\Pi$ is in the top degree
component of the Grassmann algebra.
Then $\Delta \Pi$ is a $\gl(V_\C)$-lowest weight vector (i.e. killed by
the strictly lower triangular part of $\gl(V_\C)$), whose weight is $(1^m\mid m^{2n})$.
Here we write a $\gl(V_\C)$ weight as
$\lambda=(\lambda_{\bar0} \mid \lambda_{\bar1})$ with $\lambda_{\bar0}$ (resp. $\lambda_{\bar1}$)
being the corresponding $\gl((V_\C)_{\bar0})$ (resp. $\gl((V_\C)_{\bar1})$) weight. The simple
$\gl(V_\C)$-module with this lowest weight vector is $L_{((2n+1)^m \mid 0)}$,
which is typical. As one can immediately see from its weight,
$\Delta \Pi$ spans a simple $\gl(V_\C)_{\bar0}$-module in $\cS$.
It  is even and homogeneous of degree $m(2n+1)$.
\begin{definition}\label{def:Pfaffian}
Let $\Omega:= z(\Delta \Pi)$ and call it the super Pfaffian.
\end{definition}
\begin{remark}
We believe that $\Omega$ agrees with the super Pfaffian defined by Sergeev \cite{S1} up to a sign,
hence the terminology.
\end{remark}

By Lemma \ref{lem:construct},  $\Omega$  is a nonzero $\osp(V_\C)$-invariant, which is not an invariant of the
corresponding orthosymplectic supergroup. However $\Omega^2$ is $\OSp(V)$-invariant.
Note that $\Omega$ is even and homogeneous and has the same degree as $\Delta \Pi$.
A simple computation shows that the leading term $z_+(\Delta \Pi)$ of $\Omega$
is equal to $\Delta^{1+2n}$ up to a sign.  Thus the leading term of $\Omega^2$ is equal to
$\det(X^tX)^{1+2n}$.

\begin{remark}\label{rem:sergeev}
Sergeev's \cite[Theorem 1.3]{S1} states that the subalgebra of
$\osp(V_\C)$-invariants in $S(V_\C\otimes \C^{p|q})$
is generated by the quadratics of Corollary \ref{cor:fft-osp-poly},
as well as his super Pfaffian if $p\ge m$.
We will give an independent proof of our version of this result elsewhere.
\end{remark}

\subsection{Comments on endomorphism algebras of $\osp(V_\C)$-modules}

The supersymmetric algebra $\cS=S(V_\C^m)$ is isomorphic to $S(V_\C)^{\otimes m}$, and the super Pfaffian
belongs to $\left(S^{2n+1}(V_\C)\right)^{\otimes m}$, where $S^{2n+1}(V_\C)$ is the
degree $2n+1$ homogeneous component of $S(V_\C)$.
Now $\left(S^{2n+1}(V_\C)\right)^{\otimes m} \hookrightarrow V_\C^{\otimes m(2n+1)}$ as module
for $\fg=\osp(V_\C)$.

Denote by $\tilde\Omega$ the image of the super Pfaffian in $V_\C^{\otimes m(2n+1)}$. Then
$\tilde\Omega\in \left(V_\C^{\otimes m(2n+1)}\right)^\fg$ and
$\tilde\Omega\otimes\tilde\Omega\in \left(V_\C^{\otimes 2m(2n+1)}\right)^G$, where
$G=\OSp(V)$ is interpreted as the Harish-Chandra
super pair $({\rm O}((V_\C)_{\bar0})\times {\rm Sp}((V_\C)_{\bar1}), \osp(V_\C))$.

When $m$ is even, we write $r_c= \frac{m}{2}(2n+1)$. Let $\kappa\in\End_\fg\left(V_\C^{\otimes r_c}\right)$
be the image of $\tilde\Omega$ under the isomorphism
$\left(V_\C^{\otimes 2r_c}\right)^\fg\stackrel{\sim}{\longrightarrow}
\End_\fg(V_\C^{\otimes r_c})$.
Then $\kappa\not\in \End_G\left(V_\C^{\otimes r_c}\right)$.
Thus if $m$ is even and $r\ge r_c$, it follows from Corollary \ref{cor:Brauer} that
\[
\End_\fg\left(V_\C^{\otimes r_c}\right)\supsetneq
F_{r_c}^{r_c}(B_{r_c}(m-2n)_\C),
\]
where $B_{r_c}(m-2n)_\C$ is the complex Brauer algebra of degree $r_c$ with parameter $m-2n$.

It can be shown that $\End_{\osp(V_\C)}\left(V_\C^{\otimes r}\right)$ is generated by
$\kappa$ and $F_r^r(B_r(m-2n)_\C)$ if $r\ge r_c$ and $m$ is even; finally,
$\End_{\osp(V_\C)}\left(V_\C^{\otimes r}\right)$ coincides with $\End_{\OSp(V)}\left(V_\C^{\otimes r}\right)$
if $m$ is odd or $r<r_c$.


\end{document}